
\documentclass{article}

\usepackage[a4paper, margin=1.3in]{geometry}

\usepackage{amsfonts}
\usepackage{amsmath}
\usepackage{amsthm}
\usepackage{mathrsfs}
\usepackage{color}
\usepackage{bm}
\usepackage{mathtools}
\usepackage{rotating}

\usepackage[tikz]{bclogo}

\usepackage{mdframed}
\mdfdefinestyle{MyFrame}{%
    linecolor=black,
    outerlinewidth=1pt,
    roundcorner=5pt,
    innertopmargin=\baselineskip,
    innerbottommargin=\baselineskip,
    innerrightmargin=10pt,
    innerleftmargin=10pt,
    backgroundcolor=yellow!10!white}

\usepackage[colorlinks,
linkcolor=blue,
anchorcolor = blue,
citecolor=red]{hyperref}

\newtheorem{assumption}{Assumption}
\newtheorem{definition}{Definition}
\newtheorem{theorem}{Theorem}
\newtheorem{lemma}{Lemma}
\newtheorem{proposition}{Proposition}
\newtheorem{remark}{Remark}

\newtheorem{corollary}{Corollary}

\newcommand{\PP}{\big(\mathcal{P}\big)}
\newcommand{\PPx}{\big(\mathcal{P}_x\big)}
\newcommand{\xkm}{x_{k-1}}

\newcommand{\RR}{\mathbb{R}}
\newcommand{\R}{\mathbb{R}}
\newcommand{\NN}{\mathbb{N}}
\newcommand{\CC}{\mathscr{C}}

\newcommand{\argmin}{\mathrm{argmin}\,}
\newcommand{\dom}{\mathrm{dom}\,}

\newcommand{\alg}{{\rm Multiprox}}

\newcommand{\note}[1]{{\color{blue} #1}}

\title{\bf  The multiproximal  linearization  method for convex composite problems\thanks{This work is sponsored by the Air Force Office of Scientific Research under grant FA9550-14-1-0500.}}

\author{J\'er\^ome Bolte\thanks{Toulouse School of Economics (UMR TSE-R, Universit\'e Toulouse Capitole), Manufacture des Tabacs, 31015 Toulouse, France. Email: jbolte@ut-capitole.fr}, Zheng Chen\thanks{School of Aeronautics and Astronautics, Zhejiang University, 310027 Hangzhou, China. Email: z\_chen@zju.edu.cn}, and Edouard Pauwels\thanks{IRIT-UPS, 118 route de Narbonne, 31062 Toulouse, France. Email: edouard.pauwels@irit.fr}}

\begin{document}

\maketitle{}



\begin{abstract}
Composite minimization involves a collection of smooth functions which are aggregated in a nonsmooth manner. In the convex setting, we design an algorithm by  linearizing each smooth component in accordance with its main curvature. The resulting method, called the \alg\ method, consists in solving successively simple problems (e.g.,  constrained quadratic problems) which can also feature some proximal operators. To study the complexity and the convergence of this method, we are led to study quantitative qualification conditions to understand the impact of multipliers on the complexity bounds.   We obtain explicit complexity results of the form $O(\frac{1}{k})$ involving new types of constant terms. A distinctive feature of our approach is  to be able to cope with oracles involving moving constraints. Our method is flexible enough to include the moving balls method, the proximal Gauss-Newton's method, or the forward-backward splitting, for which we recover known complexity results or establish new ones. We show through several numerical experiments how the use of multiple proximal terms can be decisive for problems with complex geometries.

%

\bigskip
\noindent {\bf Keyword}: Composite optimization; convex optimization; complexity; first order me\-thods; pro\-ximal Gauss-Newton's method, prox-linear method.
\end{abstract}

\section{Introduction}
Proximal methods are at the heart of  optimization. The idea has its roots within the infimal convolution  of Moreau \cite{Moreau:1965} 
with early algorithmic applications to variational inequalities \cite{Martinet:1970}, constrained minimization \cite{rock}, and mechanics \cite{Moreau}.  The principle is elementary but far reaching: it simply consists in generating algorithms by considering  successive strongly convex 
approximations of a given objective function. Many methods can be seen through these lenses, like for instance, the classical gradient method, the 
gradient projection method, or mirror descent methods \cite{rosen1960gradient,rosen1961gradient,Levitin:66,Nemirovskii:1983,auslender2006interior}. At this day, the  most famous example is probably  the forward-backward 
splitting algorithm \cite{passty1979ergodic,lions1979splitting,Tseng:1991,Combettes:2005} and its accelerated variant FISTA 
\cite{Beck:2009}. Many generalizations in many settings followed, see for instance \cite{eckstein1993nonlinear,Ortega:2000,Combettes:2011,leroux2012stochastic,salzo2012convergence,villa2013accelerated,plc13,plc16,schmidt2017minimizing,BC17}.

{
In order to deal with problems with more complex structure, we are led to consider models of the form 
\begin{eqnarray}
\PP:\ \ \ {\mathrm{min}}\left\{ g\left(F(x)\right):{x\in\mathbb{R}^n}\right\},\nonumber
\end{eqnarray} 
where $F=(f_1,\ldots,f_m)$ is a collection of convex differentiable functions with $L_i$ Lipschitz continuous gradient and  $g:\mathbb{R}^m\rightarrow \mathbb{R}\cup\{+\infty\}$ is a proper convex lower semicontinuous function. 
{The function $g$ is allowed to take infinite values offering a great flexibility in the modeling of constraints while it is often assumed to be finite in the literature.}
{When $g$ is restricted to be Lipschitz, a} natural proximal approach to this problem consists in  linearizing the smooth part, leaving the nonsmooth term unchanged and adding an adequate quadratic  form. Given $x$ in $\RR^n$, one obtains the proximal Gauss Newton method or the prox-linear method
$$\mbox{\rm(PGNM)}\quad x^+=\underset{y\in \RR^n}\argmin \left(g\left(F(x)+\nabla F(x)(y-x)\right)+\frac{\lambda}{2}\|y-x\|^2\right),$$ where 
\begin{equation}\label{ineq1}\lambda\geq {L_g}\displaystyle\max_{1,\ldots,m} L_i
\end{equation}
{with $L_g$ being the Lipschitz constant of $g$.}
The method\footnote{Also known as the proximal Gauss-Newton's method} progressively emerged from quadratic programming, see \cite{pshe87} and references therein, but also from ideas \`a la Gauss-Newton \cite{flet80,Burke:1985,Burke:1995}. It was eventually formulated under a proximal form in \cite{lewis2015proximal}. It  
allows to deal with general nonlinear programming pro\-blems and unifies within a simple framework many different classes of 
methods met in practice \cite{Li:2002,Li:2007,Cartis:2001,lewis2015proximal,BP,Pauwels:2016,Drusvyatskiy:2016}. It is 
{one of the rare primal methods}
for composite problems without linesearch\footnote{Indeed, PGNM is somehow a  ``constant step size" method} (see also \cite{Auslender:10}), and as such assessing its  complexity is a natural question.
   Even though the complexity analysis of convex first order methods has now become  classical (see e.g.,  \cite{Nesterov:2004}), considerable difficulties remain for com\-po\-si\-te problems with such 
   generality. One of the 
   {reasons}
    is that constraints, embodied in $g$, generate multipliers whose role is not yet understood. To our knowledge, there are very few works in this line.  In \cite{Drusvyatskiy:2016} the 
authors study this method under error bounds conditions and establish linear convergence results, see also \cite[Section 2.3]{Nesterov:2004} for related results. In a recent   
article \cite{Drusvyatskiy:20162}, the authors propose an acceleration of the same method and they obtain faster convergence guaranties under mild assumptions. The global complexity  with general assumptions on a convex $g$ seems to be an open question. 

\smallskip

We work here along a different line and we propose a new flexible method with {\em inner quadratic/proximal approximations}. Given a starting point $x$, we consider
$$(\alg)\quad x^+=\underset{y\in\RR^n}\argmin \,g\left(F(x)+\nabla F(x)\left(y-x\right)+\|y-x\|^2\left(\begin{array}{c}\frac{\lambda_1}{2}\\ \vdots \\ \frac{\lambda_m}{2}\end{array}\right)\right),$$
or more explicitly
$$ x^+=\underset{y\in\RR^n}{\argmin} \,g\left(\begin{array}{c}f_1(x)+
{\nabla f_1(x)^T (y-x)} 
+\lambda_1\frac{\|y-x\|^2}{2}\\ \vdots \\ f_m(x)+
{\nabla f_m(x)^T (y-x)} 
 +\lambda_m\frac{\|y-x\|^2}{2}\end{array}\right),$$
where \begin{equation}\label{ineq2}\lambda_i\geq L_i\mbox{ for }i=1,\ldots,m,\end{equation}
{and $g$ is allowed to be extended valued.}
  In order to preserve the convexity properties of the local model, the function $g$ is assumed to be componentwise nondecreasing. 
{In spite of the monotonicity restriction on $g$,}
  this model is quite versatile and includes  as special cases general inequality constrained convex programs, important min-max problems or additive composite models.  Observe that the local approximation used in \alg\, is sharper than the one in the proximal Gauss-Newton's method since it relies on the vector $(L_1,\ldots,L_m)$ rather than on the mere constant $\max\{L_i:i=1,\ldots,m\}$. {Due to the presence of multiple proximal/gradient terms we called our method \alg.} The key idea behind \alg, already present in \cite{Auslender:10}, is to design  local approximations through upper quadratic models specifically taylored for each of the components. This makes the method well adapted to the geometry of the original problem and allows in general to take much larger and clever steps as illustrated in numerical experiments in the last section. 

}

Studying this method presents several serious difficulties. First $g$ 
{may not have}
  full domain (contrary to what is assumed in \cite{Burke:1995,Li:2002,Drusvyatskiy:2016,Drusvyatskiy:20162}), which reflects the fact that subproblems may feature ``moving constraint sets". Even though moving constraints are very common in sequential convex programming, we did not find any genuine results on complexity in the literature.
Secondly the nature of our algorithm rises new issues concerning qualification conditions (and subsequently on the role of Lagrange multipliers in the complexity analysis). The qualification condition we consider is surprisingly simple to state, yet non trivial to study:
$$F^{-1}(\mbox{int}\,\dom g)\neq \emptyset.$$
This Slater's like condition is specific to situations when $g$ is monotone and was already used in \cite[Theo\-rem~2]{hiriart2006note} to provide formulas for computing the Legendre transform of com\-po\-si\-te functions and in \cite{BGW} to study stable duality and chain rule. Under this condition, we establish a complexity result of the form $O(\frac{1}{k})$ whose constant term depends on the geometry of $\PP$ through a quantity combining various curvature constants of the components of $F$ with the multipliers attached to the subproblems. When $g$ is finite and Lipschitz continuous, the complexity boils down to
$$\frac{L_g\sqrt{\sum_{i=1}^mL_i^2}}{2k}\|x_0-x_*\|^2$$
where $L_g$ is the Lipschitz constant of $g$. The exact same analysis leads to improved bounds if the outer function $g$ has a favorable structure, such as the 
{coordinatewise} maximum, in which case, the numerator reduces to $\max_{i=1,\ldots,m}\{L_i\}$. To our knowledge, these results were missing from the {literature}. 

We study further the boundedness properties of the sequences generated by {\alg .}
We derive in particular quantitative bounds on the multipliers for hard constrained\footnote{Here, hard constraints means that only feasible point can be considered, contrasting with infeasible methods (e.g., \cite{auslender2013extended,toint14}).} problems. This allows us  in turn to derive complexity estimates for sequential convex methods in  convex nonlinear programming such as the moving balls method \cite{Auslender:10} and its nonsmooth objective variant \cite{shefi2016dual}. To put this into perspective,  the only feasible methods for nonlinear programming  which come with such explicit estimates  are, to the best of our knowledge, interior point methods  (see \cite{nesterov1994interior,ye} and references therein).

We also analyze into depth the important cases when Lipschitz constants are not known\footnote{We refer to constants relative to the gradients.} or when they only exist locally (e.g.,  the $C^2$ case). In this setting, the ``step sizes" (the various $\lambda_i$) cannot be tuned a priori and thus complexity results are much more difficult to establish due to  the use of linesearch routines. Yet we obtain some useful rates and we are able to establish convergence of the sequence. Once more we insist on the fact that convergence in this setting is not an easy matter and very few results are known \cite{Auslender:10,auslender2013extended,solodov2009global,Cartis:2001,toint14}.

Finally, we illustrate the efficiency of \alg\  on synthetic data. We consider a composite function consisting of the maximum of convex quadratic functions with different smoothness moduli. We compare our method with the proximal Gauss-Newton algorithm and its accelerated variant described in \cite{Drusvyatskiy:20162}. These experiments illustrate that, although the complexity estimates of the \alg\ algorithm are not better than existing estimates for the 
{concurrent} methods, its adaptivity to different smoothness moduli gives it a crucial advantage in practice.

\paragraph{Outline.}  In Section \ref{SE:problem} we describe the composite optimization problem and study  qualification conditions.  Section \ref{SE:complexity} provides first general complexity and convergence results and presents consequences for specific models. Section \ref{s:search} on linesearch describes cases when Lipschitz constants are unknown or merely locally bounded. In the section \ref{SE:numerical} we provide numerical experiments illustrating the efficiency of our method.

\paragraph{Notations}
$\mathbb{R}^n$ is the $n$-dimensional Euclidean space equipped with 
the Euclidean norm $\|\cdot\|$. For $x \in \RR^n$ and $r \geq 0$, $B(x,r)$ denotes the closed Euclidean ball of radius $r$ centered at $x$. By $\mathbb{R}^n_+$, we  denote the $n$-dimensional nonnegative orthant ($n$-dimensional vectors with nonnegative entries). The notations of $<$, $\leq$, $>$, and $\geq$ between vectors indicate that the corresponding inequalities are met coordinatewise.

Our notations for convex analysis are taken from  \cite{Rockafellar:1998}. We recall the most important ones. Given a convex extended-valued convex function $h:\mathbb{R}^m \rightarrow \mathbb{R}\cup \{+\infty\}$, we set
$$\dom h := \{z\in\mathbb{R}^m: h(z) < +\infty\}.$$
The subdifferential of $h$ at any $\bar{z} \in \dom h$ is defined as usual by
$$\partial h(\bar{z}) := \{\lambda \in\mathbb{R}^m: 
{(z - \bar{z})^T \lambda}
 + h(\bar{z}) \leq h(z), \forall z\in\mathbb{R}^m\},$$
and is the empty set if $z \not \in \dom h$. 

{For any convex subset $D\subset \R^m$, we denote  by $i_D$ its indicator function -- recall that $i_D(x)=0$ if $x$ is in $D$, $i_D(x)=+\infty$ otherwise.}

\section{Minimization problem and algorithm}\label{SE:problem}

\subsection{Composite model and assumptions} 
We consider a composite minimization problem of the type:
\begin{eqnarray}
\PP:\ \ \ {\mathrm{min}}\left\{ g(F(x)):{x\in\mathbb{R}^n}\right\},\nonumber
\end{eqnarray} 
where $g:\mathbb{R}^m\rightarrow \mathbb{R}\cup\{+\infty\}$ and $F:\mathbb{R}^n\rightarrow \mathbb{R}^m$. We set $F=(f_1,\ldots, f_m)$ and we make the standing assumptions:\\
\begin{assumption}
\begin{enumerate}
\item[(a)] Each $f_i$ is continuously differentiable, convex, with $L_i$ Lipschitz continuous gradient {\rm($L_i\geq 0$)}.
\item[(b)]  The function $g:\mathbb{R}^m\rightarrow \mathbb{R}\cup\{+\infty\}$ is convex, proper, lower semicontinuous and $L_g$ Lipschitz continuous on its domain. That is 
$$|g(x)-g(y)|\leq L_g\|x-y\| \text{ for all $x,y$ in $\dom g$.}$$

For each $i=1,\ldots,m$ with $L_i > 0$, then $g$ is nondecreasing in its $i$-th argument\footnote{{For any such $i$ and any $m$ real numbers $z_1, \ldots z_m$, the function $z \mapsto g(z_1,\ldots, z_{i-1}, z,z_{i+1}, \ldots, z_m)$ is nondecreasing. In particular, its domain is either the whole of $\RR$ or a closed half line $(-\infty, a]$ for some $a\in \RR$, or empty. }}. In other words $g$ is nondecreasing in its $i$-th argument  whenever $f_i$ is not affine.
\end{enumerate}
\label{AS:convex}
\end{assumption}

{\begin{remark} {\rm (a) Note that the monotonicity restriction on $g$ implies some restrictions. For example, ignoring the affine components of $F$, for any $z \in \dom g$, we also have $z - \RR_+^m \subset \dom g$, so that $\dom g$ is not compact. Prominent examples for $g$ includes the max function, the indicator of  $\RR_-^m$ (which allows to handle nonlinearity of the type $f_i\leq 0$), support functions of a subset of positive 
{numbers}. Depending on the structure of $F$ other examples are possible (see further sections). 
\\
(b) Contrary to  $L_1,\ldots, L_m$, the value of $L_g$ is never required to design/run the algorithm (see in particular Theorems \ref{TH:nonincreasing} and \ref{TH:nonincreasingBack}).}\end{remark}}
\begin{assumption}
				The function $g\circ F$ {is proper and }has a minimizer.
\label{AS:effectiveness}
\end{assumption}
Observe that the monotonicity of $g$ and the convexity  of $F$ in Assump\-tion~\ref{AS:convex} ensure that the problem $\PP$ is a convex optimization problem, in other words:
\begin{equation}
\text{ $g\circ F$ is convex.}
\label{LE:convex}
\end{equation}

\medskip

\subsection{The Multiproximal linearization algorithm}

\noindent  Let us introduce the last fundamental ingredient necessary to the description of our method:
$$\bm{L} := (L_1,\ldots,L_m)^T\in \RR_+^m.$$ 
Observe that the monotonicity of $g$ in Assumption \ref{AS:convex} implies that for any $z\in \dom g$, one has 
\begin{equation}\bm{L}^T\lambda \geq 0, \text{ for any }\lambda \in \partial g(z).\label{LE:Lnv_positive}
\end{equation}
\medskip

The central idea  is to use quadratic upper approximations  componentwise on the smooth term~$F$. We thus introduce the following mapping  
\begin{align*}
	H(x,y) := F(x) + \nabla F(x) (y - x) + \frac{\bm{L}}{2}\|y - x\|^2,\nonumber \quad (x,y)\in\mathbb{R}^n\times\mathbb{R}^n, \end{align*}
where $\nabla F$ denotes the Jacobian matrix of $F$. This leads to the following family of subproblems 
\begin{eqnarray}
\PPx: \quad  \min\{ \,g (H(x,y)):y\in \RR^n\}.\nonumber
\end{eqnarray}
where $x$ ranges in $F^{-1}(\dom g)$.
As shall be discussed in further sections this problem is well-posed for broad classes of examples. We make the following additional standing assumption:
 \begin{assumption}\label{AS:existence_sub}
For any $x$ in $F^{-1}(\dom g)$, the function $g\circ H(x,\cdot)$ has a minimizer.
\end{assumption}

\noindent
Elementary but important properties of problem $\PPx$ are given in the following lemma.
\begin{lemma}
\label{LE:sub_property}
For any $x\in F^{-1}(\dom g)$, the following statements hold:
\begin{description}
\item (1) $\dom g(H(x,\cdot)) \subset \dom g\circ F$ and
 $$g(F(y)) \leq g(H(x,y)), \: \forall y \in \dom g(H(x,\cdot)).$$
\item (2) $g(F(x)) = g(H(x,x))$.
\item (3) $g\circ H(x,\cdot)$ is proper and convex.
\end{description}
\end{lemma}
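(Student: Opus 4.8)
The plan is to verify the three assertions of Lemma~\ref{LE:sub_property} by exploiting two elementary facts: the standard descent-type inequality coming from Assumption~\ref{AS:convex}(a), and the componentwise monotonicity of $g$ from Assumption~\ref{AS:convex}(b). Fix $x\in F^{-1}(\dom g)$.

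\textbf{Step 1 (componentwise comparison $F(y)\le H(x,y)$).} For each $i=1,\ldots,m$, since $f_i$ is convex with $L_i$-Lipschitz gradient, the descent lemma gives
$$ f_i(y)\ \le\ f_i(x)+\nabla f_i(x)^T(y-x)+\frac{L_i}{2}\|y-x\|^2 \ =\ H_i(x,y),\qquad \forall y\in\RR^n.$$
When $f_i$ is affine this holds with equality and $L_i=0$ may be taken. Thus $F(y)\le H(x,y)$ coordinatewise for all $y$. Here I would be slightly careful about which coordinates actually matter: for indices $i$ with $L_i>0$, $g$ is nondecreasing in the $i$-th argument by Assumption~\ref{AS:convex}(b); for indices with $L_i=0$ (affine $f_i$), we have $H_i(x,y)=f_i(y)$, so no monotonicity of $g$ in that coordinate is needed. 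Applying monotonicity of $g$ coordinate by coordinate (increasing the coordinates where it is nondecreasing, leaving the affine coordinates unchanged) yields $g(F(y))\le g(H(x,y))$ for all $y\in\RR^n$. In particular, if $y\in\dom g(H(x,\cdot))$, i.e. $H(x,y)\in\dom g$, then $g(F(y))\le g(H(x,y))<+\infty$, so $y\in\dom g\circ F$; this proves assertion~(1).

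\textbf{Step 2 (exact value at $y=x$).} Plugging $y=x$ into the definition of $H$ gives $H(x,x)=F(x)$, hence $g(H(x,x))=g(F(x))$, which is assertion~(2); note this also shows $x\in\dom g(H(x,\cdot))$ since $F(x)\in\dom g$ by the choice of $x$, so the function $g\circ H(x,\cdot)$ is proper.

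\textbf{Step 3 (properness and convexity of $g\circ H(x,\cdot)$).} Properness was just observed. For convexity, write $H(x,y)=A(y)+\frac{\bm L}{2}\|y-x\|^2$ where $A(y)=F(x)+\nabla F(x)(y-x)$ is affine. For each coordinate $i$, the map $y\mapsto H_i(x,y)$ is convex (affine plus $\frac{L_i}{2}\|y-x\|^2$, $L_i\ge 0$). Now use the standard composition rule: $g$ is convex and, in each coordinate where the inner function is genuinely nonlinear (i.e. $L_i>0$), $g$ is nondecreasing; the coordinates where the inner function is affine impose no monotonicity requirement. Hence $y\mapsto g(H(x,y))$ is convex. (This is exactly the reasoning that underlies \eqref{LE:convex}, applied to $H(x,\cdot)$ in place of $F$.)

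The only genuinely delicate point is the bookkeeping in Steps~1 and~3 about the affine components: one must invoke the monotonicity of $g$ \emph{only} in the coordinates where $f_i$ is non-affine, which is precisely what Assumption~\ref{AS:convex}(b) provides, and observe that in the affine coordinates $H_i(x,\cdot)=f_i$ is itself affine so monotonicity is not needed there. Everything else is a direct application of the descent lemma and the convexity/monotonicity composition rule, so I do not expect any real obstacle beyond stating this carefully.
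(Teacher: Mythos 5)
Your proposal is correct and follows essentially the same route as the paper: the descent lemma applied componentwise, the case distinction between $L_i>0$ (monotonicity of $g$ used) and $L_i=0$ (equality, no monotonicity needed), and then direct verification of items (2) and (3). The paper states items (2) and (3) as ``simple verifications''; your Steps 2 and 3 just make that explicit.
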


\begin{proof}

According to Assumption \ref{AS:convex} and the descent Lemma, \cite[Lemma 1.2.3]{Nesterov:2004},  for every $i$ in $\{1,\ldots,m\}$ one has
$$f_i(y) 
\begin{cases}
\leq f_i(x) +
 { \nabla f_i(x)^T(y - x)} + \frac{L_i}{2}\|y - x\|^2,\ L_i>0,\\
= f_i(x) + 
 { \nabla f_i(x)^T(y - x)} + \frac{L_i}{2}\|y - x\|^2,\ L_i=0,
\end{cases}\ \ \forall (x,y)\in\mathbb{R}^n\times\mathbb{R}^n.
$$
By the monotonicity properties of $g$ we obtain that  $g(F(y)) \leq g(H(x,y))$ for all $y\in \mathbb{R}^n$ and (1) is proved. 
Items (2) and (3) follow from simple verifications. \end{proof}


\smallskip

 \begin{mdframed}[style=MyFrame]
 \begin{center}
 {\bf Multiproximal method (\alg)}
 \end{center}
 \bigskip
\noindent \ \ \ \ \ Choose $\ x_0 \in F^{-1}(\dom g)$ and iterate for $k\in \NN$:
\begin{equation}
x_{k+1}\in \underset{y\in \RR^n}{\argmin}\ \ g\left(F(x_k) + \nabla F(x_k) (y - x_k) + \frac{\bm{L}}{2}\|y - x_k\|^2\right)
\label{Numerical_Scheme}
\end{equation}
\mbox{with the choice $x_{k+1}=x_k$ whenever $x_k$ is a minimizer of {$g(F(x_k,\cdot))$}.}
\end{mdframed}

%
\noindent
If we set 
$$p(x):= \underset{ y\in\mathbb{R}^n}{\text{argmin}}\ g (H(x,y))$$
 for any $x$ in $F^{-1}(\dom g)$ the algorithm simply {reads as}
 $x_{k+1}\in p(x_k)$ with $x_{k+1}=x_k$ whenever $x_k\in p(x_k)$.

\begin{remark}{\rm (a) Item (1) of Lemma~\ref{LE:sub_property}, {along with Assumption \ref{AS:existence_sub},}  implies that the algorithm is well defined. Observe that Lemma~\ref{LE:sub_property} actually shows that our  algorithm is based on the classical idea of minimizing successively majorant functions coinciding at order 1 with the original function.\\
(b) As already mentioned, the algorithm does not require the knowledge of the Lipschitz constant of $g$ on its domain.}
\end{remark}

{
\subsection{Examples and implementation issues}\label{examples}

We give here two important examples for which the subproblems are simple quadratic problems.

\subsubsection{Convex nonlinear programming}\label{movi} Consider the classical 
convex 
nonlinear programming problem 
\begin{equation}\label{exnlp}\min \left\{f(x):f_i(x)\leq 0,\:  i=1,\ldots,m, \, x \in \R^n \right\},\end{equation}
where $f \colon \RR^n \to \RR$ is a convex function with $L_f$ Lipschitz continuous gradient and each $f_i$ is defined as in Assumption \ref{AS:convex}.  Using the reformulation:
 $ \min \{ f(x)+i_{\RR_-^m}(f_1(x),\ldots, f_m(x)): x\in \RR^n\}$ and setting $g(s,y_1,\ldots,y_m)=s+i_{\R_-}(y_1,\ldots,y_m)$ and $F(x)=(f(x),f_1(x),\ldots,f_m(x))$, the problem\footnote{There is a slight shift in the indices of $F$} \eqref{exnlp} can be seen as an instance of $\PP$. \alg\ writes 
\begin{align}
	x_{k+1} \in \underset{y\in\mathbb{R}^n}{\argmin}\quad & f(x_k) + 
	\nabla f(x_k)^T (y- x_k)
	+ \frac{L_f}{2}\|y - x_k\|^2  \nonumber \\
	\mathrm{s.t.}\quad&f_i(x_k) +
	\nabla f_i(x_k)^T (y- x_k)
	  + \frac{L_i}{2}\|y - x_k\|^2 \leq 0,\, i=1,\ldots,m,
	\label{EQ:MBalgo}
\end{align}
which is a  generalization of the moving balls method \cite{Auslender:10,shefi2016dual} in the sense that our algorithm offers the additional flexibility that affine constraints can be left unchanged in the subproblem (by setting the corresponding $L_i$ to $0$). Assume for simplicity that $L_f>0$.

Computing $x_{k+1}$ leads to solve very specific quadratic problems. Indeed, if $q$ is  a quadratic form appearing within the above subproblem, its Hessian is given by  $\nabla^2q=cI_n$ (with $c>0$) or $\nabla^2q=0_n$, where $I_n$ (resp. $0_n$) denotes the identity (resp. null) matrix in $\R^{n\times n}$. Computing $x_{k+1}$ amounts to computing
 the Euclidean projection of a point to an intersection of Euclidean balls/hyperplanes. Both types of sets have extremely simple projection operators and one can thus apply Dykstra's projection algorithm (see e.g., \cite{BC17}) or a fast quadratic solver (see e.g., \cite{nocedal}).  Let us also mention that this type  of problems can be treated very efficiently by specific methods based on activity detection  described in \cite{Auslender:10, shefi2016dual}.

\subsubsection{
Min-max problems}\label{minmax} 
We consider the problem 
$$\min\left\{\max_{i=1,\ldots,m} f_i (x):x\in \RR^n\right\}.$$ This type of problems is very classical in optimization but also in game theory (see e.g.,  \cite{Nesterov:2004}). Observing that $g=\max_{1,\ldots,m} {f_i}$ satisfies our assumptions, we see that the problem is already under the form $\PP$. The substeps assume thus the form 
 \begin{align}
	x_{k+1} \in \underset{y\in\RR^n}{\argmin}\quad & \left(\max_{i=1,\ldots,m}(f_i(x_k) + 
	\nabla f_i(x_k)^T(y - x_k)
	 + \frac{L_i}{2}\|y - x_k\|^2)\right) .\nonumber
\end{align}
As previously explained, this subproblem can be rewritten as a simple quadratic problem and it can thus be solved through the same means. In the last section, we illustrate the numerical efficiency of \alg\ on this type of problems.


\begin{remark} \rm Other cases can be treated by \alg. Consider for example, the following problem
\begin{align*}
	\underset{x\in\mathbb{R}^n}{\mathrm{min}}\quad\quad& \mathrm{max}\{f_1(x),f_2(x)\} + \|x\|_1,\\
	\mathrm{s.t.}\quad\quad&\mathrm{max}\{f_3(x),f_4(x)\} + \|x\|_{\infty} \leq 0.
\end{align*}
where $f_i$ are smooth convex functions for $i = 1,\ldots,4$. Then,   for any $x \in \RR^n$, the solution of $\PPx$ can be computed as follows:
\begin{align*}
				\min_{y \in \RR^n, \,s \in \RR, \,t\in \RR^n, \,u \in \RR, \,v \in \RR} \quad\quad &s + \sum_{i=1}^n t_i\\
				\mathrm{s.t.}\quad\quad &f_1(x) + \nabla f_1(x)^T (y - x) + \frac{L_1}{2} \|y - x\|^2 \leq s\\
				&f_2(x) + \nabla f_2(x)^T (y - x) + \frac{L_2}{2} \|y - x\|^2 \leq s\\
				&x_i\leq t_i, \, i=1,\ldots, n\\
				&-x_i\leq t_i, \, i=1,\ldots, n\\
				&{t_i\leq v}, \, i=1,\ldots, n\\
				&f_3(x) + \nabla f_3(x)^T (y - x) + \frac{L_3}{2} \|y - x\|^2 \leq u\\
				&f_4(x) + \nabla f_4(x)^T (y - x) + \frac{L_4}{2} \|y - x\|^2 \leq u\\
				&u + v \leq 0
\end{align*}
which is a  quadratically constrained linear program. 

\end{remark}

}

\subsection{Qualification, optimality conditions and a condition number}\label{s:cr}
The first issue met in the study of \alg\ is the one of qualification conditions both for $\PP$ and $\PPx$. Classical qualification conditions take the form
\begin{align}
				\label{EQ:qualifTradi}
				N_{\dom g}(F(x)) \cap \mathrm{ker}(\nabla F (x)^T) = \{0\}, \quad \forall x \in F^{-1}(\dom g),
\end{align}
where $N_{\dom g}(F(x))$ denotes the normal cone to $\dom g$ (see e.g.,  \cite[Example 10.8]{Rockafellar:1998}). In this section, we first describe a different qualification condition which takes advantage of the specific monotonicity properties of $g$ as described in Assumption \ref{AS:convex}. This condition was already used in \cite{hiriart2006note} to provide a formula for the Legendre conjugate of the composite function $g(F(\cdot))$. In this setting, we show that this condition allows to use the chain rule  and provides optimality conditions which will be crucial to study \alg\ algorithm. We emphasize that this qualification condition is much more practical than conditions of the form (\ref{EQ:qualifTradi}). Besides it is also naturally amenable to quantitative estimation which appears to be fundamental for the computation of complexity estimates.

\paragraph{Qualification condition and chain rule} 
The following Slater's like qualification condition is specific to the ``monotone composite model" we consider here (see \cite[Theorem 2]{hiriart2006note} and \cite[ Section 3.5.2]{BGW}).

{We make the following standing assumption:}\begin{assumption}[Qualification]\label{AS:Qualification}
				There exists $\bar{x}$ in $\displaystyle F^{-1}(\text{\rm int}\,\dom g).$
\end{assumption}
\noindent
The following result illustrates the main interest of Assumption~\ref{AS:Qualification}:
\begin{proposition}[Chain rule]\label{PR:chainRule}
For all $x$ in $F^{-1}(\dom g)$:
$$\partial \left(g\circ F\right)(x)=
{\nabla F(x)^T}
\partial g(F(x)).$$
\end{proposition}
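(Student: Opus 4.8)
The plan is to establish the chain rule $\partial(g\circ F)(x) = \nabla F(x)^T \partial g(F(x))$ by proving the two inclusions separately, with the nontrivial direction relying on the Slater-type qualification in Assumption~\ref{AS:Qualification} together with the monotonicity of $g$.

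First I would dispose of the easy inclusion ``$\supset$'': given $\lambda \in \partial g(F(x))$, the subgradient inequality for $g$ reads $(z - F(x))^T\lambda + g(F(x)) \le g(z)$ for all $z$. Applying this at $z = F(y)$ and using convexity of each $f_i$ together with the sign condition $\bm{L}^T\lambda \ge 0$ from \eqref{LE:Lnv_positive} — more precisely, the fact that $\lambda_i \ge 0$ whenever $f_i$ is not affine, which is exactly the componentwise statement behind monotonicity of $g$ — one gets $\lambda^T F(y) \ge \lambda^T(F(x) + \nabla F(x)(y-x))$, since for the affine components equality holds and for the others $f_i(y) \ge f_i(x) + \nabla f_i(x)^T(y-x)$ multiplied by the nonnegative $\lambda_i$ preserves the inequality. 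Combining, $(y-x)^T \nabla F(x)^T\lambda + g(F(x)) \le g(F(y))$, so $\nabla F(x)^T\lambda \in \partial(g\circ F)(x)$.

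For the hard inclusion ``$\subset$'', the idea is to reduce to a standard convex-analytic sum/composition rule that holds under a Slater condition. Write $g\circ F = g\circ F$ and observe that near a fixed $x$ we want to apply the Moreau--Rockafellar / Ioffe--Tikhomirov chain rule for $\partial(g\circ \mathcal{A})$ where $\mathcal{A}$ is an affine map — but $F$ is nonlinear, so the clean statement isn't directly available. The standard route (as in \cite[Theorem~2]{hiriart2006note}) is: the monotonicity of $g$ lets us write $g = \sup$ over an affine family indexed by $\partial g$, i.e. $g(z) = \sup_{(\lambda,\beta)} \{\lambda^T z - \beta\}$ with $\lambda$ ranging over effective domain of $g^*$, which — by monotonicity — lies in $\RR_+^m$ (up to the affine components). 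Then $g\circ F$ is a supremum of the convex functions $x \mapsto \lambda^T F(x) - \beta$ (convex precisely because $\lambda \ge 0$ on the nonaffine coordinates), and one applies the subdifferential formula for a pointwise supremum of convex functions. The Slater point $\bar x \in F^{-1}(\mathrm{int}\,\dom g)$ provides the continuity/properness needed so that the max-formula for subdifferentials of suprema applies and the ``active'' multipliers are exactly $\partial g(F(x))$; this yields $\partial(g\circ F)(x) \subset \mathrm{cl\,conv}\bigcup \{\nabla F(x)^T\lambda : \lambda \in \partial g(F(x))\}$, and since $\partial g(F(x))$ is already convex and compact (compactness using that $g$ is Lipschitz on its domain, Assumption~\ref{AS:convex}(b)) the closure and convex hull are redundant, giving the reverse inclusion.

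The main obstacle is making the supremum-representation argument rigorous in the presence of a genuinely nonlinear $F$: one must check that the family $\{\lambda^T F(\cdot) - \beta\}$ over $(\lambda,\beta) \in \mathrm{epi}\,g^*$ indeed recovers $g\circ F$ pointwise (using $g = g^{**}$ since $g$ is proper lsc convex), that each member is convex (the sign constraint on $\lambda$), and that the qualification Assumption~\ref{AS:Qualification} guarantees the finiteness and the exactness of the max-formula at the point $x$ — equivalently, that there is no ``singular'' / recession contribution to $\partial(g\circ F)(x)$. An alternative, perhaps cleaner, route is to invoke the general nonsmooth chain rule $\partial(g\circ F)(x) = \nabla F(x)^T \partial g(F(x))$ valid when the constraint qualification $N_{\dom g}(F(x)) \cap \ker(\nabla F(x)^T) = \{0\}$ of \eqref{EQ:qualifTradi} holds, and then show that Assumption~\ref{AS:Qualification} combined with monotonicity of $g$ (hence $N_{\dom g}(F(x)) \subset \RR_+^m$ on the relevant coordinates) implies \eqref{EQ:qualifTradi}: indeed if $v \in N_{\dom g}(F(x))$ with $\nabla F(x)^T v = 0$, then $v^T(F(\bar x) - F(x)) \le 0$ by definition of the normal cone, while $v \ge 0$ on nonaffine coordinates and $f_i(\bar x) < \sup\{s : s \in \dom g \text{ in coord } i\}$ from the Slater point forces, via convexity, $v = 0$. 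I would present the proof via this second route, as it isolates the qualification argument into one short lemma-style paragraph and then cites the known chain rule; the forward inclusion is the quick computation above.
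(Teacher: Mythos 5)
Your committed route (the second one) is correct and is essentially the paper's strategy: reduce the nontrivial inclusion to the classical convex chain rule of \cite[Theorem~10.6]{Rockafellar:1998}, which holds under a qualification condition at $x$, and verify that condition at every $x\in F^{-1}(\dom g)$ from the single Slater point $\bar x$ of Assumption~\ref{AS:Qualification}, using precisely the two structural facts you identify: monotonicity of $g$ in the nonaffine coordinates (so that any $v\in N_{\dom g}(F(x))$ satisfies $v_i\geq 0$ there) and componentwise convexity of $F$ (so that $F(\bar x)-F(x)\geq \nabla F(x)(\bar x-x)$ on those coordinates, with equality on the affine ones). The difference is one of packaging: the paper (Proposition~\ref{TH:QC} in Appendix~\ref{SE:appendixProofChainRule}) proves a full \emph{equivalence} between Assumption~\ref{AS:Qualification} and a non-separation condition between $\dom g$ and the linearized image $J(x,\RR^n)$, and obtains the direction relevant to the chain rule by contraposition through an explicit separating functional; you verify the normal-cone form \eqref{EQ:qualifTradi} directly, which for an affine set passing through $F(x)$ is the same condition. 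Your version is shorter and avoids the signed-distance machinery, which the paper only needs for the converse implication (irrelevant to the chain rule). One step of your sketch should be written out in full: for $v\in N_{\dom g}(F(x))\cap\mathrm{ker}(\nabla F(x)^T)$ you record only $v^T(F(\bar x)-F(x))\leq 0$, whereas the contradiction requires the two-sided chain $0=v^T\nabla F(x)(\bar x-x)\leq v^T(F(\bar x)-F(x))\leq -\epsilon\|v\|$, the last inequality coming from a ball $B(F(\bar x),\epsilon)\subset\dom g$; it is the strict gain $-\epsilon\|v\|$ supplied by interiority, not the mere inequality $\leq 0$, that forces $v=0$. The direct computation for the easy inclusion is fine, and the supremum-representation route you sketch first (that of \cite{hiriart2006note}) is not needed once the classical chain rule is invoked.
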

Proposition \ref{PR:chainRule} follows from \cite[Theorem 3.5.2]{BGW}, and we provide a self contained proof in  Appendix \ref{SE:appendixProofChainRule}. Another interesting and useful consequence of Assumption \ref{AS:Qualification} is that it automatically ensures a similar qualification condition for all subproblems~$\PPx$.
\begin{proposition}[Qualification for subproblems]\label{PR:qualifSub}
For all $x$ in $F^{-1}(\dom g)$ there exists $w(x) \in \RR^n$ such that
\begin{align*}
				H(x, w(x)) \in \mathrm{int}\ \dom g.			
\end{align*}
\end{proposition}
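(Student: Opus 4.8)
The plan is to produce $w(x)$ explicitly by interpolating between $x$ and the Slater point $\bar x\in F^{-1}(\mathrm{int}\,\dom g)$ furnished by Assumption~\ref{AS:Qualification}, with a small but strictly positive step. Fix $x\in F^{-1}(\dom g)$, pick $\rho>0$ with $B(F(\bar x),\rho)\subseteq\dom g$, and set $A:=\{i : L_i>0\}$; for $i\notin A$ the gradient $\nabla f_i$ is $0$-Lipschitz, hence constant, so $f_i$ is affine and $H_i(x,\cdot)=f_i(\cdot)$. For $t\in(0,1)$ write $w_t:=(1-t)x+t\bar x$ and $z_t:=(1-t)F(x)+tF(\bar x)$. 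A first routine fact is that $B(z_t,t\rho)\subseteq\dom g$ — any point $z_t+v$ with $\|v\|<t\rho$ is the convex combination $(1-t)F(x)+t\big(F(\bar x)+v/t\big)$ of $F(x)\in\dom g$ and a point of $B(F(\bar x),\rho)$ — so in particular $z_t\in\mathrm{int}\,\dom g$.

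Next I would compare $H(x,w_t)$ with $z_t$ coordinatewise. For $i\notin A$, affineness of $f_i$ gives $H_i(x,w_t)=(1-t)f_i(x)+tf_i(\bar x)=(z_t)_i$ exactly. For $i\in A$, since $w_t-x=t(\bar x-x)$ and the convexity inequality $\nabla f_i(x)^T(\bar x-x)\le f_i(\bar x)-f_i(x)$ holds,
$$H_i(x,w_t)=f_i(x)+t\,\nabla f_i(x)^T(\bar x-x)+t^2\tfrac{L_i}{2}\|\bar x-x\|^2\ \le\ (z_t)_i+c_i,\qquad c_i:=t^2\tfrac{L_i}{2}\|\bar x-x\|^2\ \ge\ 0.$$
Hence $H(x,w_t)$ sits coordinatewise below $z_t':=z_t+(c_i)_{i\in A}$ (set $c_i=0$ for $i\notin A$), with equality off $A$. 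The quantitative heart of the argument is that $\|z_t'-z_t\|=t^2M$ with $M:=\big(\sum_{i\in A}(\tfrac{L_i}{2}\|\bar x-x\|^2)^2\big)^{1/2}$, i.e. the perturbation is \emph{quadratic} in $t$, while the interior margin around $z_t$ is only of \emph{linear} order $t\rho$; so by choosing $t\in(0,1)$ with $tM<\rho$ we get $z_t'\in B(z_t,t\rho)\subseteq\dom g$, and even $z_t'\in\mathrm{int}\,\dom g$ since $B(z_t',t\rho-t^2M)\subseteq B(z_t,t\rho)$. (When $A=\emptyset$ or $\bar x=x$ one has $M=0$ and $H(x,w_t)=z_t\in\mathrm{int}\,\dom g$, so any such $t$ — or simply $w(x)=\bar x$ — already works.)

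Finally I would use the monotonicity of $g$ (Assumption~\ref{AS:convex}) to conclude: since $g$ is nondecreasing in each coordinate $i\in A$, decreasing these coordinates (keeping the others fixed) cannot leave $\dom g$, and the same is true for $\mathrm{int}\,\dom g$. Concretely, if $B(z_t',\varepsilon)\subseteq\dom g$ and $\zeta\le z_t'$ on $A$ with $\zeta=z_t'$ off $A$, then for $\|u-\zeta\|<\varepsilon$ the point $\hat u$ obtained from $u$ by adding $z_t'-\zeta$ on the coordinates in $A$ satisfies $\|\hat u-z_t'\|=\|u-\zeta\|<\varepsilon$, so $\hat u\in\dom g$, and coordinatewise monotonicity gives $g(u)\le g(\hat u)<+\infty$; thus $\zeta\in\mathrm{int}\,\dom g$. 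Applying this with $\zeta=H(x,w_t)$ yields $H(x,w_t)\in\mathrm{int}\,\dom g$, and $w(x):=w_t$ proves the proposition. The main obstacle, which is exactly why the statement is delicate, is that the curvature correction $\tfrac{\bm L}{2}\|w-x\|^2$ pushes $H$ in the \emph{increasing} (hence unfavourable) direction on the monotone coordinates; the segment-with-small-step device is what balances this quadratic defect against the merely linear amount of interior room available near the boundary value $F(x)$.
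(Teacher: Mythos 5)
Your proof is correct, and it rests on the same basic device as the paper's: move from $x$ toward the Slater point $\bar x$ of Assumption~\ref{AS:Qualification} along the segment, with a step $t$ small enough that the curvature term $t^2\tfrac{\bm L}{2}\|\bar x - x\|^2$ (quadratic in $t$, and pushing in the unfavourable, increasing direction) is absorbed by the linearly-sized amount $t\rho$ of interior room inherited by convexity from $F(\bar x)$. Where you differ is in the execution. The paper obtains the proposition as a by-product of Lemma~\ref{LE:lower_bound_Hxw}, which works with the \emph{signed distance} to $\mathrm{bd}\ \dom g$: it combines the concavity of that function with the monotonicity Lemma~\ref{LE:g_bd} (the signed-distance version of your ``lowering the coordinates in $A$ cannot leave $\dom g$, nor its interior'' step) to produce a closed-form lower bound on $\mathrm{sdist}[H(x,w),\mathrm{bd}\ \dom g]$, with the step $t=\gamma(\bar x,x)$ chosen to optimize a quadratic in $t$. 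Your ball-inclusion argument ($B(z_t,t\rho)\subseteq\dom g$, perturbation of size $t^2M$ versus margin $t\rho$) is more elementary and fully self-contained, and it suffices for the qualitative statement; what it does not buy is the uniform, explicit lower bound on the distance to the boundary that the paper needs downstream to bound the multipliers in Lemma~\ref{TH:explicitBound} and hence to get the explicit complexity constant of Theorem~\ref{CO:explicitLipDomain} -- your margin $t\rho - t^2M$ depends on an unoptimized $t$ and on an arbitrary admissible $\rho$ rather than on $\mathrm{dist}[F(\bar x),\mathrm{bd}\ \dom g]$ itself. One cosmetic remark: the paper's $B(\cdot,\cdot)$ denotes closed balls, so a couple of your strict inequalities should be loosened or tightened accordingly, which changes nothing of substance.
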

\begin{proof}
	An explicit construction of $w(x)$ is provided in Lemma \ref{LE:lower_bound_Hxw} (Appendix \ref{SE:appendixExplicitBound}).
\end{proof}
These results provide necessary and sufficient optimality conditions for $\PP$ and for $\PPx$.

\begin{corollary}[Fermat's rule]
\label{TH:optimality_condition}
A point $x_* \in F^{-1}(\dom g)$ is a minimizer of $g\circ F$ if and only if 
\begin{eqnarray}
\exists \lambda_*\in \partial g(F(x_*))\ \text{s.t.}\ 
{\nabla F(x_*)^T}
 \lambda_* = 0.\nonumber
\end{eqnarray}
For any $x$ in $F^{-1}(\dom g)$, and for any $y$ in $\mathbb{R}^n$, we have $y \in p(x)$ if and only if
 $$\exists \nu\in\partial g(H(x,y))\ \text{s.t.}\ 
{\nabla_y H(x,y)^T}
  \nu = 0.$$
\end{corollary}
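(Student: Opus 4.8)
The plan is to derive both optimality characterizations directly from Fermat's classical rule $0 \in \partial(g\circ F)(x_*)$ (respectively $0 \in \partial(g\circ H(x,\cdot))(y)$) combined with the chain rule of Proposition~\ref{PR:chainRule}. Indeed, since $g\circ F$ is proper, lsc and convex by \eqref{LE:convex} and Assumption~\ref{AS:effectiveness}, a point $x_*\in F^{-1}(\dom g)$ minimizes $g\circ F$ if and only if $0\in\partial(g\circ F)(x_*)$. Applying Proposition~\ref{PR:chainRule} at $x_*$ turns this into $0\in \nabla F(x_*)^T\partial g(F(x_*))$, which is precisely the existence of $\lambda_*\in\partial g(F(x_*))$ with $\nabla F(x_*)^T\lambda_*=0$. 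That settles the first equivalence.

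For the second, I would argue analogously on the subproblem $\PPx$. First observe that $g\circ H(x,\cdot)$ is proper and convex by item~(3) of Lemma~\ref{LE:sub_property}, and has a minimizer by Assumption~\ref{AS:existence_sub}; hence $y\in p(x)$ iff $0\in\partial\big(g\circ H(x,\cdot)\big)(y)$. It then remains to apply the chain rule to the composition $z\mapsto g(z)$ with the smooth map $y\mapsto H(x,y)$. This is where I must be slightly careful: Proposition~\ref{PR:chainRule} is stated for $g\circ F$, not directly for $g\circ H(x,\cdot)$. The clean way around this is to note that $H(x,\cdot)$ has exactly the structure required of $F$ in Assumption~\ref{AS:convex}: each component $y\mapsto f_i(x)+\nabla f_i(x)^T(y-x)+\tfrac{L_i}{2}\|y-x\|^2$ is convex and $C^1$ with Lipschitz gradient (indeed its gradient is $\nabla f_i(x)+L_i(y-x)$), it is affine precisely when $L_i=0$, and $g$ is nondecreasing in those coordinates where $L_i>0$. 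Moreover Proposition~\ref{PR:qualifSub} supplies a point $w(x)$ with $H(x,w(x))\in\operatorname{int}\dom g$, i.e.\ the analogue of Assumption~\ref{AS:Qualification} holds for the pair $(g,H(x,\cdot))$. Thus Proposition~\ref{PR:chainRule}, whose proof uses only these ingredients, applies verbatim to give $\partial\big(g\circ H(x,\cdot)\big)(y)=\nabla_y H(x,y)^T\partial g(H(x,y))$, and the equivalence $y\in p(x)\iff \exists\,\nu\in\partial g(H(x,y))$ with $\nabla_y H(x,y)^T\nu=0$ follows.

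The only genuine obstacle is the bookkeeping just described: making sure the hypotheses under which Proposition~\ref{PR:chainRule} was proved are literally met by $H(x,\cdot)$ in place of $F$, so that one may invoke it as a black box rather than re-proving a chain rule. Once that observation is in place (and it requires nothing more than checking convexity, $C^1$-smoothness, the affine-iff-$L_i=0$ dichotomy, and the Slater point from Proposition~\ref{PR:qualifSub}), everything reduces to Fermat's rule for convex functions and there is no further difficulty. I would also remark in passing that the degenerate case $x_k\in p(x_k)$ built into the algorithm is consistent here, since then $x_k$ satisfies the stated subproblem optimality condition by construction.
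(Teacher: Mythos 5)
Your proposal is correct and follows essentially the same route as the paper: the paper likewise dispatches the first equivalence as an immediate consequence of Fermat's rule plus the chain rule of Proposition~\ref{PR:chainRule}, and obtains the second ``for the same reasons, replacing Assumption~\ref{AS:Qualification} by Proposition~\ref{PR:qualifSub}.'' Your careful verification that $H(x,\cdot)$ satisfies the hypotheses under which Proposition~\ref{PR:chainRule} was proved simply makes explicit the bookkeeping the paper leaves implicit.
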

\noindent
The first part of the corollary is of course an immediate consequence of the chain rule in Proposition~\ref{PR:chainRule}. The second part holds true for the same reasons, replacing Assumption \ref{AS:Qualification} by Proposition~\ref{PR:qualifSub}.

\paragraph{Lagrange multipliers and a condition number}

\begin{definition}[Lagrange multipliers for $\PPx$]
\label{DE:set}
For any $x\in F^{-1}(\dom g)$ and any $y\in p(x)$, we set
\begin{eqnarray*}
\mathcal{V}(x,y) := \big\{\nu\in\partial g(H(x,y)): 
{\nabla_y H(x,y)^T}
\nu = 0\big\}.
\end{eqnarray*}
\end{definition}

\smallskip

 The following quantity, which can be seen as  a kind of condition number captures  the boundedness properties of the multipliers 
 for the subproblems.  It will play a crucial role in our complexity studies.
\begin{lemma}[A condition number]
\label{LE:bounded1}
Given any nonempty compact set $K\subseteq F^{-1}(\dom g)$ and any $\gamma \geq \min g\circ F$, the following quantity is finite 
\begin{eqnarray*}
	\CC_\gamma(K):= \sup\left\{\inf \{\bm{L}^T \nu : \nu\in \mathcal{V}(x,y)\}:\:x\in K,\, g(F(x)) \leq \gamma,\, y\in p(x) \right\} .
\end{eqnarray*}
\end{lemma}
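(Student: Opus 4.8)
The plan is to show that $\CC_\gamma(K)$ is finite by deriving, for each admissible triple $(x,y,\nu)$, a uniform upper bound on $\bm{L}^T\nu$ that depends only on $K$, $\gamma$, and the data of the problem. First I would introduce the feasible parameter set
\[
S := \{(x,y): x\in K,\ g(F(x))\le\gamma,\ y\in p(x)\}
\]
and observe that, since every $y\in p(x)$ satisfies $g(F(y))\le g(H(x,y))\le g(H(x,x))=g(F(x))\le\gamma$ by Lemma~\ref{LE:sub_property}, the second component $y$ ranges inside the sublevel set $\{z: g(F(z))\le\gamma\}$. I would next argue that this sublevel set is bounded: indeed the minimizing set of the coercive-modulo-qualification problem $\PP$ is handled via the monotonicity of $g$ together with Assumption~\ref{AS:effectiveness}; more carefully, combining boundedness of $K$ with the descent-type inequality gives that the relevant portion of $p(K)$ lies in a fixed compact set $K'$. (If the sublevel set is not globally bounded, one restricts to the compact hull of $K\cup\{x_*\}$ and uses that $y\in p(x)$ keeps $x$ and $y$ at controlled distance — the explicit construction of $w(x)$ in Lemma~\ref{LE:lower_bound_Hxw} gives quantitative control here.)

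The core estimate uses the Lipschitz continuity of $g$ on its domain (Assumption~\ref{AS:convex}(b)) and the explicit feasible point $w(x)$ from Proposition~\ref{PR:qualifSub} with $H(x,w(x))\in\operatorname{int}\dom g$. Fix $(x,y)\in S$ and $\nu\in\mathcal{V}(x,y)\subseteq\partial g(H(x,y))$. The subgradient inequality at $H(x,y)$ tested against $H(x,w(x))$ gives
\[
\nu^T\big(H(x,w(x))-H(x,y)\big)\le g(H(x,w(x)))-g(H(x,y))\le 2L_g\,\gamma\text{-type bound},
\]
more precisely $\le g(H(x,w(x)))-\min g\circ F$, which is bounded on $K$ by continuity of $F$ and of the (locally bounded) explicit map $w$. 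Since $H(x,w(x))$ sits in the interior of $\dom g$, there is a radius $\rho(x)>0$ with $B(H(x,w(x)),\rho(x))\subset\dom g$, and $\rho$ can be taken bounded below by a positive constant $\rho_0$ over the compact set $\{H(x,w(x)):x\in K\}$ (using that $\operatorname{int}\dom g$ is open and the map is continuous). Then testing $\nu$ against $H(x,w(x))+\rho_0\,\nu/\|\nu\|$ and again invoking $L_g$-Lipschitzness of $g$ on its domain yields $\rho_0\|\nu\|\le\nu^T(H(x,y)-H(x,w(x)))+L_g\rho_0$, hence a uniform bound $\|\nu\|\le C$ over $S$. Finally $\bm{L}^T\nu\le\|\bm{L}\|\,\|\nu\|\le\|\bm{L}\|\,C<\infty$, and taking $\inf$ over $\nu\in\mathcal{V}(x,y)$ and then $\sup$ over $(x,y)\in S$ keeps the quantity finite.

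The step I expect to be the main obstacle is establishing the uniform positive lower bound $\rho_0$ on the interior-ball radius at the points $H(x,w(x))$, which requires (i) that $w(\cdot)$ — or rather $x\mapsto H(x,w(x))$ — is continuous (or at least bounded) on $K$, so that $\{H(x,w(x)):x\in K\}$ is contained in a compact subset of $\operatorname{int}\dom g$, and (ii) a mild argument that the distance from a compact set to the complement of an open set is positive. Both rely on the explicit construction of $w(x)$ deferred to Lemma~\ref{LE:lower_bound_Hxw}; I would cite that lemma for the needed quantitative/regularity properties rather than reconstruct them here. A secondary subtlety is ensuring $\mathcal{V}(x,y)\neq\emptyset$ for every $y\in p(x)$ so that the inner infimum is over a nonempty set — this is exactly the second part of Corollary~\ref{TH:optimality_condition}, which applies thanks to Proposition~\ref{PR:qualifSub}.
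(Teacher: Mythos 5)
Your overall strategy is the right one and matches the paper's: the paper proves this lemma simply by pointing to Lemma~\ref{TH:explicitBound}, whose proof uses exactly the ingredients you list (the Slater point $w(x)$ of Lemma~\ref{LE:lower_bound_Hxw}, the $L_g$-Lipschitz continuity of $g$ on its domain, the subgradient inequality at $H(x,y)$, and a uniform positive lower bound on the distance from $H(x,w(x))$ to $\mathrm{bd}\,\dom g$). However, your core estimate has a genuine gap. The quantity you need to control is $\nu^T\bigl(H(x,y)-H(x,w(x))\bigr)$, and the subgradient inequality only gives you the reverse bound: for $\nu\in\partial g(H(x,y))$ it yields $\nu^T\bigl(H(x,w(x))-H(x,y)\bigr)\leq g(H(x,w(x)))-g(H(x,y))$, i.e.\ a \emph{lower} bound on $\nu^T\bigl(H(x,y)-H(x,w(x))\bigr)$, which is useless here. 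The generic ``interior ball $\Rightarrow$ bounded subgradients'' argument you then invoke requires the base point of the subgradient to lie inside (or within distance $<\rho_0$ of) the interior ball; in the hard case $H(x,y)\in\mathrm{bd}\,\dom g$, it sits far from $B(H(x,w(x)),\rho_0)$, and the leftover term $\nu^T\bigl(H(x,y)-H(x,w(x))\bigr)\leq\|\nu\|\,\|H(x,y)-H(x,w(x))\|$ re-enters your inequality with a coefficient that defeats the bound $\rho_0\|\nu\|$ on the left. Your chain of inequalities does not close.

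The missing ingredient is the stationarity condition defining $\mathcal{V}(x,y)$, namely $\nabla_y H(x,y)^T\nu=0$. Since $z\mapsto H(x,z)^T\nu$ is a quadratic with leading coefficient $\bm{L}^T\nu/2\geq 0$ whose gradient vanishes at $y$, identity \eqref{EQ:strong_a} gives the \emph{exact} relation
\begin{equation*}
\bigl[H(x,w(x))-H(x,y)\bigr]^T\nu \;=\; \frac{\bm{L}^T\nu}{2}\,\|w(x)-y\|^2 .
\end{equation*}
This does two things at once: it shows the term you could not control is in fact nonpositive when moved to the other side, and it makes the target quantity $\bm{L}^T\nu$ appear directly. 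Sandwiching then finishes the proof: the left-hand side is at most $g(H(x,w(x)))-g(H(x,y))\leq L_g\|H(x,w(x))-H(x,y)\|$, the factor $\|H(x,w(x))-H(x,y)\|$ is at most $\|w(x)-y\|$ times an explicit affine expression in $\|\nabla F(x)\|_{\rm op}$, $\|x-y\|$, $\|\bar x-x\|$, and at least $\mathrm{sdist}[H(x,w(x)),\mathrm{bd}\,\dom g]$, which Lemma~\ref{LE:lower_bound_Hxw} bounds below uniformly on $K$. Dividing out $\|w(x)-y\|$ yields the explicit bound on $\bm{L}^T\nu$ of Lemma~\ref{TH:explicitBound}(ii) (note that the paper bounds $\bm{L}^T\nu$ directly rather than $\|\nu\|$, which is all that is needed). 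Your treatment of the easy case $H(x,y)\in\mathrm{int}\,\dom g$ (where $\|\nu\|\leq L_g$ and Cauchy--Schwarz give $\bm{L}^T\nu\leq L_g\|\bm{L}\|$) and your remark that $\mathcal{V}(x,y)\neq\emptyset$ follows from Proposition~\ref{PR:qualifSub} are both correct.
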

\begin{proof}
		We shall see that		Lemma \ref{TH:explicitBound} provides an explicit bound on this condition number and as a consequence
				 $\CC_\gamma(K)$ is finite.
\end{proof}

\begin{remark}[$\CC_\gamma(K)$ as a condition number] {\rm {In numerical analysis, the term condition number usually refers to a measure of the magnitude of variation of the output as a function of the variation of the input.  For instance, when one studies the usual gradient method for minimizing a convex function $f:\R^n\to\R$ with Lipschitz gradient $L_f$, one is led to algorithms of the form 
$x_{k+1}=x_k-\nabla f(x_k)/L_f$
 and the complexity takes the form $\displaystyle L_f\|x_0-x^*\|^2/(2k)$ where $x^*$ is a minimizer of $f$. 
The bigger $L_f$ is, the worse the estimate is.
  It captures in particular the compositional structure of the model by combining the smoothness modulus of $F$ with some regularity for $g$ captured through KKT multipliers.}}\end{remark}

\section{Complexity and convergence}\label{SE:complexity}

This section is devoted to the exposition of the complexity results obtained for \alg. In the first {subsection}, we describe our main results, an abstract convergence result and  
{we provide explicit} complexity estimate. We then describe the consequences for known algorithms.

\subsection{Complexity results for \alg}

\paragraph{General complexity and convergence results}

We begin by establishing that \alg\ is a descent method:

\begin{lemma}[Descent property]
\label{LE:nonincreasing}
For any $x\in F^{-1}(\dom g)$, if $y\in p(x)$, one has
\begin{equation}
g(F(y)) - g(F(x)) \leq -\frac{\|y - x\|^2}{2} \bm{L}^T {\nu},\nonumber
\end{equation}
for all $\nu$ in $\mathcal{V}(x,y)$.
\end{lemma}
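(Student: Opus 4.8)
The plan is to exploit the upper model property from Lemma~\ref{LE:sub_property} together with the optimality conditions for the subproblem $\PPx$ encoded in Corollary~\ref{TH:optimality_condition}. The point is that $y\in p(x)$ means $y$ minimizes $g\circ H(x,\cdot)$, so $g(H(x,y))\le g(H(x,x))=g(F(x))$ by item (2) of Lemma~\ref{LE:sub_property}; on the other hand, item (1) gives $g(F(y))\le g(H(x,y))$. Combining these two already yields $g(F(y))\le g(F(x))$, but without the quantitative $\|y-x\|^2$ term. To get the sharper inequality I would instead use the subgradient inequality for $g$ at the point $H(x,y)$ applied to the point $F(x)=H(x,x)$.

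First I would fix $\nu\in\mathcal{V}(x,y)$, so that $\nu\in\partial g(H(x,y))$ and $\nabla_y H(x,y)^T\nu=0$. Writing out the subgradient inequality of $g$ at $H(x,y)$ evaluated at $H(x,x)$:
\begin{equation*}
g(H(x,x))\ge g(H(x,y))+\big(H(x,x)-H(x,y)\big)^T\nu.
\end{equation*}
Next I would compute $H(x,x)-H(x,y)$ explicitly from the definition $H(x,y)=F(x)+\nabla F(x)(y-x)+\frac{\bm L}{2}\|y-x\|^2$: one finds
\begin{equation*}
H(x,x)-H(x,y)=-\nabla F(x)(y-x)-\frac{\bm L}{2}\|y-x\|^2.
\end{equation*}
Taking the inner product with $\nu$ and using $\nabla_y H(x,y)=\nabla F(x)+\bm L (y-x)^T$ is the slightly delicate part: I need the identity $\big(\nabla F(x)(y-x)\big)^T\nu=(y-x)^T\nabla F(x)^T\nu$, and I must relate $\nabla F(x)^T\nu$ to the stationarity condition $\nabla_y H(x,y)^T\nu=0$, i.e. $\nabla F(x)^T\nu+\|y-x\|^2\cdot(\text{something})$... — more precisely $\nabla_y H(x,y)^T\nu = \nabla F(x)^T\nu + (\bm L^T\nu)(y-x)=0$, hence $\nabla F(x)^T\nu=-(\bm L^T\nu)(y-x)$. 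Substituting, $\big(\nabla F(x)(y-x)\big)^T\nu=(y-x)^T\nabla F(x)^T\nu=-(\bm L^T\nu)\|y-x\|^2$. Therefore
\begin{equation*}
\big(H(x,x)-H(x,y)\big)^T\nu=(\bm L^T\nu)\|y-x\|^2-\frac{\|y-x\|^2}{2}\bm L^T\nu=\frac{\|y-x\|^2}{2}\bm L^T\nu.
\end{equation*}
Plugging into the subgradient inequality gives $g(F(x))=g(H(x,x))\ge g(H(x,y))+\frac{\|y-x\|^2}{2}\bm L^T\nu$, and then using $g(F(y))\le g(H(x,y))$ from Lemma~\ref{LE:sub_property}(1) yields $g(F(y))\le g(F(x))-\frac{\|y-x\|^2}{2}\bm L^T\nu$, which is exactly the claim. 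Note $y\in\dom g(H(x,\cdot))$ since it is a minimizer and $g\circ H(x,\cdot)$ is proper by Lemma~\ref{LE:sub_property}(3), so the application of item (1) is legitimate.

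I expect the main obstacle to be purely bookkeeping: getting the Jacobian-transpose manipulation right, in particular the correct form of $\nabla_y H(x,y)$ as an $m\times n$ matrix (the gradient of $\frac{L_i}{2}\|y-x\|^2$ in the $i$-th row contributing $L_i(y-x)^T$), and being careful that $\bm L^T\nu=\sum_i L_i\nu_i$ is a scalar so that the substitution of $\nabla F(x)^T\nu=-(\bm L^T\nu)(y-x)$ is dimensionally consistent. There is no real analytic difficulty; the inequality is essentially the standard "descent lemma for majorization-minimization" sharpened by the stationarity condition. One could alternatively phrase the argument via strong-convexity-type arguments on the quadratic part, but the direct subgradient computation above is the cleanest route and I would present it that way.
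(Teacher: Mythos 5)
Your proof is correct and is essentially identical to the paper's own argument: both use the subgradient inequality for $g$ at $H(x,y)$ evaluated at $F(x)=H(x,x)$, substitute the stationarity relation $\nabla F(x)^T\nu=-(\bm L^T\nu)(y-x)$, and finish with the majorization $g(F(y))\le g(H(x,y))$ from Lemma~\ref{LE:sub_property}. The bookkeeping with $\nabla_y H(x,y)^T\nu=0$ matches the paper's equation~\eqref{EQ:A_suboptimality} exactly, so there is nothing to add.
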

\begin{proof}
By Definition \ref{DE:set}, for every $x\in F^{-1}(\dom g)$ and $y\in p(x)$ we have $
{\nabla_y H(x,y)^T}
{\nu} = 0$, for any $\nu\in\mathcal{V}(x,y)$. In other words
\begin{eqnarray}
{\nabla F(x)^T}
 {\nu} +   (y - x)(\bm{L}^T {\nu}) = 0,\ \forall \nu\in\mathcal{V}(x,y).
\label{EQ:A_suboptimality}
\end{eqnarray}
By convexity of $g$, one has  
\begin{eqnarray}
 g(H(x,y)) - g(F(x))&\leq& [H(x,y) - F(x)]^T {\nu}\nonumber\\
&=& 
{[\nabla F(x)^T {\nu}]^T(y - x)}
+ \frac{\|y - x\|^2}{2} \bm{L}^T {\nu},\ \forall \nu \in\mathcal{V}(x,y).\nonumber
\end{eqnarray}
Substituting this inequality into equation (\ref{EQ:A_suboptimality}) yields
\begin{eqnarray}
 g(H(x,y)) - g(F(x)) \leq - \frac{\|y - x\|^2}{2} \bm{L}^T {\nu},\ \forall \nu\in\mathcal{V}(x,y).\nonumber
 \end{eqnarray}
Combining Lemma \ref{LE:sub_property} with this inequality completes the proof.
\end{proof}
\begin{remark}[\alg\ is a descent method]{\rm 
For any sequence $(x_k)_{k\in\mathbb{N}}$ generated by \alg, the corresponding sequence of objective values $(g\circ F(x_k))_{k\in\mathbb{N}}$ is  nonincreasing.}
\label{RE:nonincreasing}
\end{remark}

\noindent
Let us set 
\begin{eqnarray*}
S & := &\argmin g\circ F\\ 
  &= & \{x\in F^{-1}(\dom g): \exists \lambda \in \partial g(F(x))\ \text{s.t.}\ 
 {\nabla F(x)^T}
   \lambda = 0\}.\nonumber
\end{eqnarray*}

The following theorem is our first main result under the assumption that the smoothness moduli of the components of $F$ are known and available to the user. The first item is a complexity result while the second one is a convergence result. Discussion regarding the impact of our results {on}
 other existing algorithms is held in Section \ref{SE:conseqComplexity}. 
\begin{theorem}[Complexity and convergence for \alg]
Let $(x_k)_{k\in\mathbb{N}}$ be a sequence generated by \alg. Then, the following statements hold:
\begin{description}
	\item (i) For any $x_*\in S$ set $B_{x_0,x_*}= B(x_*,\|x_0-x_*\|)$.
Then, for all $k \geq 1$,  
\begin{equation}
g (F(x_k)) - g(F(x_*)) \leq\frac{\CC_{g(F(x_0))}\left(B_{x_0,x_*}\right)}{2k} \|x_0 - x_*\|^2;
\label{EQ:complexity_f}
\end{equation}
\item (ii) The sequence $(x_k)_{k\in\mathbb{N}}$ converges to a point in the solution set $S$.\end{description}
\label{TH:nonincreasing}
\end{theorem}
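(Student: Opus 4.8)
The plan is to prove the two items of Theorem~\ref{TH:nonincreasing} by a standard ``potential function'' argument adapted to the multiproximal setting, using the descent property (Lemma~\ref{LE:nonincreasing}), the optimality conditions (Corollary~\ref{TH:optimality_condition}), and the finiteness of the condition number $\CC_\gamma(K)$ (Lemma~\ref{LE:bounded1}).

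\textbf{Step 1: A one-step descent-plus-contraction inequality.} Fix a minimizer $x_*\in S$ and an iterate $x=x_k$ with $x^+=x_{k+1}\in p(x)$. Pick $\nu\in\mathcal{V}(x,x^+)$. First I would combine the descent Lemma~\ref{LE:nonincreasing}, which gives $g(F(x^+))-g(F(x))\le -\frac{\|x^+-x\|^2}{2}\bm{L}^T\nu$, with a comparison against $x_*$ obtained from convexity of $g$ at the point $H(x,x^+)$ tested against $H(x,x_*)$. Concretely, $g(H(x,x_*))-g(H(x,x^+))\ge (H(x,x_*)-H(x,x^+))^T\nu$, and using $\nabla_yH(x,x^+)^T\nu=0$ (i.e.\ $\nabla F(x)^T\nu+(x^+-x)\bm{L}^T\nu=0$) together with $g(F(x_*))\le g(H(x,x_*))$ and $g(F(x^+))\le g(H(x,x^+))$, I expect to land on an inequality of the familiar three-point form
\begin{equation*}
g(F(x^+))-g(F(x_*))\le \frac{\bm{L}^T\nu}{2}\Big(\|x-x_*\|^2-\|x^+-x_*\|^2\Big).
\end{equation*}
This is the multiproximal analogue of the basic inequality for the proximal/gradient step; the scalar $\bm{L}^T\nu\ge0$ (nonnegative by \eqref{LE:Lnv_positive}) plays the role of the stepsize reciprocal.

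\textbf{Step 2: Telescoping with a uniform multiplier bound.} By Remark~\ref{RE:nonincreasing} the objective values are nonincreasing, so $g(F(x_k))\le g(F(x_0))$ for all $k$; moreover, from the displayed inequality with $x=x_0$ (and Lemma~\ref{LE:nonincreasing}) one gets $\|x_{k+1}-x_*\|\le\|x_k-x_*\|\le\cdots\le\|x_0-x_*\|$, so all iterates lie in the compact ball $B_{x_0,x_*}=B(x_*,\|x_0-x_*\|)\subseteq F^{-1}(\dom g)$. Hence for each $k$ we may choose $\nu_k\in\mathcal{V}(x_k,x_{k+1})$ achieving (up to $\varepsilon$, or by a standard argument the infimum is attained) $\bm{L}^T\nu_k\le\CC_{g(F(x_0))}(B_{x_0,x_*})=:\CC$. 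Using $\bm{L}^T\nu_k\le\CC$ in the Step~1 inequality (the direction of the inequality is preserved because the bracket $\|x_k-x_*\|^2-\|x_{k+1}-x_*\|^2\ge0$) gives $g(F(x_{k+1}))-g(F(x_*))\le \frac{\CC}{2}(\|x_k-x_*\|^2-\|x_{k+1}-x_*\|^2)$. Summing from $k=0$ to $K-1$ and using monotonicity of the values ($K$ copies of $g(F(x_K))-g(F(x_*))$ bounded by the sum) yields $g(F(x_K))-g(F(x_*))\le\frac{\CC}{2K}\|x_0-x_*\|^2$, which is \eqref{EQ:complexity_f}.

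\textbf{Step 3: Convergence of the sequence (item (ii)).} Here I would run a Fej\'er-type argument. From Step~1, $(\|x_k-x_*\|)_k$ is nonincreasing for \emph{every} $x_*\in S$, hence convergent, so $(x_k)$ is bounded and Fej\'er monotone w.r.t.\ $S$. It then suffices to show every cluster point lies in $S$. Summing the descent inequality of Lemma~\ref{LE:nonincreasing} shows $\sum_k \|x_{k+1}-x_k\|^2\,\bm{L}^T\nu_k<\infty$; combined with the complexity bound forcing $g(F(x_k))\downarrow\min g\circ F$, and lower semicontinuity of $g\circ F$, any cluster point $\bar x$ satisfies $g(F(\bar x))=\min g\circ F$, i.e.\ $\bar x\in S$. (Care is needed if $\bm{L}^T\nu_k$ can vanish; but if $\bm{L}^T\nu_k=0$ then the Step-1 inequality forces $g(F(x_{k+1}))=g(F(x_*))$ and we are already optimal, or one argues directly that $x_{k+1}=x_k$ is a fixed point, i.e.\ a minimizer by Corollary~\ref{TH:optimality_condition}.) Finally, the standard lemma ``Fej\'er monotone $+$ all cluster points in $S$ $\Rightarrow$ convergence to a point of $S$'' closes item (ii).

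\textbf{Main obstacle.} The delicate point is the handling of the multipliers: one must select $\nu_k\in\mathcal{V}(x_k,x_{k+1})$ whose weighted norm $\bm{L}^T\nu_k$ is uniformly bounded along the whole trajectory, which is exactly what Lemma~\ref{LE:bounded1} guarantees once the iterates are confined to the fixed compact set $B_{x_0,x_*}$ with objective values below $g(F(x_0))$ --- so the argument has a mild circularity (the bound on multipliers needs boundedness of iterates, and boundedness of iterates uses the one-step contraction, which itself references a multiplier) that must be unwound inductively: first establish $\|x_{k+1}-x_*\|\le\|x_k-x_*\|$ using \emph{any} $\nu\in\mathcal V(x_k,x_{k+1})$ (Step~1 already does this for arbitrary $\nu$), deduce $x_{k+1}\in B_{x_0,x_*}$, and only then invoke $\CC$. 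A second subtlety is ensuring the infimum defining $\mathcal V$/$\CC_\gamma$ is attained (or passing to an $\varepsilon$-optimal choice and letting $\varepsilon\to0$), and dealing cleanly with the degenerate case $\bm L^T\nu_k=0$ in the convergence proof.
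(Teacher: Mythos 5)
Your overall architecture --- a one-step three-point inequality, Fej\'er monotonicity established \emph{before} invoking the condition number, telescoping, and an Opial-type argument --- is exactly the paper's, and your Steps 2 and 3 (including the unwinding of the apparent circularity and the treatment of the degenerate case $\bm{L}^T\nu=0$) match the actual proof. The gap is in Step 1. Testing the subgradient inequality of $g$ at $H(x,x^+)$ against $H(x,x_*)$ and then invoking $g(F(x_*))\le g(H(x,x_*))$ cannot work: that last inequality points the wrong way for your purpose (you would need a \emph{lower} bound on $g(F(x_*))$ in terms of $g(H(x,x_*))$, whereas Lemma \ref{LE:sub_property} only provides an upper bound). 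Concretely, your three listed ingredients combine to give $g(F(x^+))\le g(H(x,x_*))-\frac{\bm{L}^T\nu}{2}\|x^+-x_*\|^2$, and the leftover nonnegative term $g(H(x,x_*))-g(F(x_*))$ cannot be bounded by $\frac{\bm{L}^T\nu}{2}\|x-x_*\|^2$ with the \emph{same} $\nu$, since $\nu$ is a subgradient of $g$ at $H(x,x^+)$ and not at $F(x_*)$ or $H(x,x_*)$; falling back on Lipschitz continuity of $g$ would replace $\bm{L}^T\nu$ by $L_g\|\bm{L}\|$ and lose the statement.

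The paper's derivation avoids this by applying the subgradient inequality at $H(x,x^+)$ against $F(x_*)$ \emph{directly}, i.e.\ $g(H(x,x^+))-g(F(x_*))\le [H(x,x^+)-F(x_*)]^T\nu$, then using the quadratic identity coming from $\nabla_yH(x,x^+)^T\nu=0$ to rewrite $H(x,x^+)^T\nu=H(x,x_*)^T\nu-\frac{\bm{L}^T\nu}{2}\|x^+-x_*\|^2$, and finally --- the ingredient missing from your sketch --- the componentwise gradient inequality $F(x)+\nabla F(x)(x_*-x)\le F(x_*)$ (with equality on the affine components) combined with $\nu_i\ge0$ whenever $L_i>0$, which yields $[F(x)+\nabla F(x)(x_*-x)]^T\nu\le F(x_*)^T\nu$ and makes the $F(x_*)^T\nu$ terms cancel. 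This last step is where the convexity of the components of $F$ and the sign information on the multiplier actually enter, and it is the heart of the one-step estimate. With Step 1 repaired in this way, the rest of your argument goes through essentially verbatim.
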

\begin{proof}
(i) {Let $n$ be a  positive integer.}  
The following elementary observation appears to be very useful:  given any function of the form
$$f:\mathbb{R}^n\rightarrow \mathbb{R},\ x\mapsto a \|x\|^2 +  {b^T x} + c,$$
where $a\in \mathbb{R}_+$, $b\in\mathbb{R}^n$, and $c\in\mathbb{R}$, if there exists $\hat{x}$ in $\mathbb{R}^n$ such that $\nabla f(\hat{x}) = 0$, one has
\begin{eqnarray}
f(x) = f(\hat{x}) + a\|x - \hat{x}\|^2\ \forall x\in\mathbb{R}^n.
\label{EQ:strong_a}
\end{eqnarray}
By Definition \ref{DE:set}, {for any integer $k>0$ and} for every  $\nu_k\in\mathcal{V}(x_{k-1},x_k)$ the gradient of $H(x_{k-1},\cdot)^T {\nu}_{k}$ at $x_k$ is zero, i.e., $\nabla_y [H(x_{k-1},x_k)^T {\nu}_{k}] =0$. Combining the explicit expression of $H(x_{k-1},\cdot)^T {\nu}_{k}$ with equation (\ref{EQ:strong_a}) and considering that $\bm{L}^T\nu_k \geq 0$ (see \eqref{LE:Lnv_positive}), one has, for any $\nu_k$ in $\mathcal{V}(x_{k-1},x_k)$ and any $x_* $ in $S$,
  \begin{eqnarray}
	H(x_{k-1},x_{k})^T {\nu}_{k} & =  & F(x_{k-1}){^T\nu_k}+[\nabla F(x_{k-1})(x_k-x_{k-1}){]^T\nu_k}+\frac{\|x_k-\xkm\|^2}{2}\bm{L}{^T}\nu_k\\
 & = &H(x_{k-1},x_{*})^T {\nu}_{k} - \frac{\bm{L}^T {\nu}_{k}}{2}{\|x_k - x_*\|^2}. \label{EQ:strong_equation} \end{eqnarray}
 
 \noindent
 {According to} the convexity of $g$, for any $\nu_k$ in $\mathcal{V}(x_{k-1},x_k)$ and any $x_*$ in $S$,
\begin{eqnarray}
g\circ H(x_{k-1},x_{k}) - g\circ F(x_{*})\leq  [H(x_{k-1},x_{k})  - F(x_{*}) ]^T {\nu}_{k}.
\label{EQ:complexity_1}
\end{eqnarray}
As a consequence, for any $\nu_k$ in $\mathcal{V}(x_{k-1},x_k)$ and any $x_*$ in $S$, one has
\begin{align}
\label{EQ:bound_decreasing}
\begin{split}
&\ g\circ F(x_{k})  - g\circ F(x_{*})\\
\overset{(a)}{\leq} &\ [H(x_{k-1},x_k) - F(x_*)]^T {\nu}_k\\
\overset{(b)}{=} &\ H(x_{k-1},x_{*})^T {\nu}_{k}   - \frac{\bm{L}^T {\nu}_{k}}{2}{\| x_k - x_*\|^2} - F(x_{*})^T {\nu}_{k}  \\
\overset{(c)}{=} &\ [ F(x_{k-1}) + \nabla F(x_{k-1})( x_{*} - x_{k-1}) ]^T {\nu}_{k}     + \frac{\bm{L}^T   {\nu}_{k} }{2} (\|x_{k-1} - x_*\|^2 - \|x_k - x_*\|^2) - F(x_{*})^T  {\nu}_{k} \\
\overset{(d)}{\leq} &\ F(x_{*})^T {\nu}_{k}   + \frac{\bm{L}^T   {\nu}_{k} }{2} (\|x_{k-1} -x_*\|^2 - \|x_k - x_*\|^2) - F(x_{*})^T  {\nu}_{k}  \\
= &\ \frac{\bm{L}^T   {\nu}_{k} }{2} (\|x_{k-1} -x_*\|^2 - \|x_k - x_*\|^2),
\end{split}
\end{align}
where $(a)$ is obtained by combining Lemma \ref{LE:sub_property} with equation (\ref{EQ:complexity_1}), for $(b)$ we use equation (\ref{EQ:strong_equation}), for $(c)$ we expand $H(x_{k-1},x_*)^T {\nu}_k$ explicitly, and for $(d)$ we use the property that the $i$-th coordinate of $\nu_k$ is nonnegative if $L_i>0$ (c.f. Assumption \ref{AS:convex}) and the coordinatewise convexity of $F$.

Let us consider beforehand the stationary case.  If there {exists}
a positive integer $k_0$  and a subgradient $ \nu\in \mathcal{V}(x_{k_0-1},x_{k_0})$ such that $\bm{L}^T\nu  =0$, one deduces from  equation~(\ref{EQ:bound_decreasing}) that $g(F(x_{k_0})) = \mathrm{inf}\{g\circ F\}$. Recalling that the sequence $\big(g(F(x_k))\big)_{k\in\mathbb{N}}$ is nonincreasing (cf. Remark \ref{RE:nonincreasing}), one thus has $x_{k_0+j}\in S$ for any $j\in\mathbb{N}$. Using Lemma \ref{LE:sub_property}, it follows that for any $j \in \mathbb{N}$, $x_{k_0+j} \in p(x_{k_0+j})$. Hence, for all $j \in \NN$, $x_{k_0+j} = x_{k_0} \in S$ and the algorithm actually stops at a global minimizer.

This ensures that if there exists $k_0$ such that $1\leq k_0 \leq k$ and a subgradient $ \nu\in \mathcal{V}(x_{k_0-1},x_{k_0})$ with $\bm{L}^T\nu  =0$, 
then equation (\ref{EQ:complexity_f}) holds since in this case, $x_k =x_{k_0} \in S$.

To proceed, we now suppose  that $\bm{L}^T \nu >0$ for every $\nu\in \mathcal{V}(x_{j-1},x_j)$ and for every $j$ in $\{1,\ldots,{k}\}$. Observe first that by (\ref{EQ:bound_decreasing})   the sequence $\|x_k-x_*\|$ is nonincreasing and, since $x_k$ is a descent sequence for $g\circ F$, it evolves within $B(x_*,\|x_0-x_*\|)$ and satisfies $g(F(x_k)) \leq g(F(x_0))$ for all $k \in \NN$. Recalling Lemma \ref{LE:bounded1}, we have the following 
{boundedness} result
\begin{equation}\label{maj}
\CC_{g(F(x_0))}(B_{x_0,x_*})\geq \underset{i=1,\ldots,k}{\mathrm{max}} \big\{ \underset{\nu\in\mathcal{V}(x_{i-1},x_i)}{\mathrm{min}}\{\bm{L}^T {\nu}\} \big\}, \forall k\geq 1.
\end{equation}
The rest of the proof is quite standard. Combining inequalities of the form (\ref{EQ:bound_decreasing}) with the above inequality \eqref{maj} one obtains  
\begin{eqnarray}
g(F(x_j)) - g (F(x_*)) \leq \frac{\CC_{g(F(x_0))}(B_{x_0,x_*}) }{2}(\|x_{j-1} - x_*\|^2 - \|x_j - x_*\|^2), 
\label{EQ:unifBound}
\end{eqnarray}
 for all {$j\in\{1,\ldots,k\}$}
  and for any $x_*\in S$.

Fix $k\geq 1$. Summing up inequality (\ref{EQ:unifBound}) for $j \in \{1, \ldots,k\}$ ensures that for any $x_* \in S$, we have
\begin{eqnarray}
\sum_{j=1}^{k} [g \circ F(x_{j}) - g\circ F(x_{*})] &\leq&\frac{\CC_{g(F(x_0))}(B_{x_0,x_*})}{2}(\|x_0 - x_*\|^2 - \|x_k - x_*\|^2)\nonumber\\
& \leq & \frac{\CC_{g(F(x_0))}(B_{x_0,x_*}) }{2}\|x_0 - x_*\|^2.
\label{EQ:complexity_9}
\end{eqnarray}
Since the sequence $(g\circ F(x_j))_{j\in\mathbb{N}}$ is nonincreasing, it follows that, for any $x_* \in S$,
\begin{align}
	k[g \circ F(x_{k}) - g\circ F(x_{*}) ] \leq \sum_{j=1}^k [g \circ F(x_{j}) - g\circ F(x_{*}) ].
	\label{EQ:boundSum}
\end{align}
Substituting inequality (\ref{EQ:boundSum}) into inequality (\ref{EQ:complexity_9}), we obtain
\begin{eqnarray}
k[g \circ F(x_{k}) - g\circ F(x_{*}) ]  \leq \frac{\CC_{g(F(x_0))}(B_{x_0,x_*})}{2} \|x_0 - x_*\|^2,\ \forall x_*\in S.\nonumber
\end{eqnarray}
Dividing both sides of this inequality by $k$ indicates that (\ref{EQ:complexity_f}) holds. Since $k$ was an arbitrary positive integer, this completes the proof of (i).

\smallskip

\noindent
(ii) The proof relies on Opial's lemma/monotonicity techniques \`a la F\'ejer (see \cite{BC17}). Using \eqref{EQ:complexity_f} and the lower semicontinuity of $g\circ F$ one immediately proves that cluster points of $(x_k)_{k\in \NN}$ are in $S$. Combining this with the fact that $\|x_k-x_*\|$ is nonincreasing for all $x_* \in S$ concludes the proof.\end{proof}

\begin{remark}\label{r}{\rm (a) If the function $g$ is globally $L_g$ Lipschitz continuous, one has $\CC_\gamma(K) \leq L_g \|\bm{L}\|$ for any compact set $K\subset\mathbb{R}^n$ and any $\gamma \geq \min g \circ F$, see Subsection \ref{SE:LIP}.\\
(b) Consider a minimization problem $\PP$ which has several formulations in the sense that there exist $g_1,F_1$ and $g_2,F_2$  such that the objective is given by $g_1\circ F_1=g_2\circ F_2$. Then the complexity results for the two formulations may differ considerably, see Subsection~\ref{SE:FB}. \\
(c) Note that the above proof actually yields a more subtle ``online" {estimate}:
\begin{equation}\label{EQ:complexity_f2}
g (F(x_k)) - g(F(x_*)) \leq \frac{\underset{i=1,\ldots,k} {\mathrm{max}} \big\{ \min\left\{\bm{L}^T {\nu}: \nu\in\mathcal{V}(x_{i-1},x_i) \big\}\right\}}{2k} \|x_0 - x_*\|^2.
\end{equation}
This shows that the specific history of a sequence plays an important role in its actual complexity. This is of course not captured by global constants of the form $\CC_{g(F(x_0))}(B_{x_0,x_*})$ which are worst case estimates.\\
(d) The complexity estimate of Theorem \ref{TH:nonincreasing} does not directly involve the constant $L_g$, {but} only multipliers. This will be useful to recover existing complexity results for {algorithms} such as the forward-backward splitting algorithm in Section \ref{SE:FB}.
}
\end{remark}

\paragraph{Explicit complexity bounds}
We now provide an explicit bound for the condition number which will in turn provide explicit complexity bounds for \alg. Our approach  relies on a thorough study of the multipliers and on a measure of the Slater's like assumption through the term $$\mathrm{dist}[F(\bar{x}),\mathrm{bd}\ \dom g]>0,$$
whose positivity follows from Assumption~\ref{AS:Qualification}.
Our results on multipliers are recorded in the following fundamental lemma. Its proof is quite delicate and  it is postponed 
in the appendix. 

\smallskip

Given a matrix $A\in \RR^{m\times n}$, its operator norm is denoted by $\|A\|_{\rm op}$.
\begin{lemma}[Bounds for the multipliers of {$\PPx$}\,]
For any $x\in F^{-1}(\dom g )$, $y\in p(x)$ and $\nu\in\mathcal{V}(x,y)$, the following statements hold:\\
 (i) if $H(x,y)\in \mathrm{int}\ \dom g$, then $\bm{L}^T\nu \leq L_g \|\bm{L}\|$;\\
 (ii) if $H(x,y)\in \mathrm{bd}\ \dom g$, then \\
%
%
$\displaystyle				\bm{L}^T\nu 
\leq   8L_g \left(\|\nabla F(x)\|_{\rm op} +(3\|x - y\| + \|\bar{x} - x\|) \frac{\|\bm{L}\|}{2} \right)^2 
\frac{ \mathrm{dist}[F(\bar{x}),\mathrm{bd}\ \dom g ] + \|\bm{L}\| \|\bar{x} - x \|^2/2 } {\mathrm{dist}[F(\bar{x}),\mathrm{bd}\ \dom g ]^2 }$
$\quad$ where $\bar{x}$ is as in Assumption \ref{AS:Qualification}.

\label{TH:explicitBound}
\end{lemma}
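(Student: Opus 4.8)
The plan is to exploit the KKT characterization $\nabla_y H(x,y)^T \nu = 0$ together with the Slater point $\bar x$ from Assumption~\ref{AS:Qualification}. Recall that $\nabla_y H(x,y) = \nabla F(x) + \bm{L}(y-x)^T$, so the stationarity condition reads $\nabla F(x)^T\nu + (\bm{L}^T\nu)(y-x) = 0$. The quantity to control is $t := \bm{L}^T\nu \ge 0$ (nonnegative by \eqref{LE:Lnv_positive}). Case (i) is the easy one: if $H(x,y)\in\mathrm{int}\,\dom g$ then $\nu\in\partial g(H(x,y))$ and $g$ is $L_g$-Lipschitz on its domain, so $\|\nu\|\le L_g$ (interior subgradients are bounded by the Lipschitz constant); then $\bm{L}^T\nu \le \|\bm{L}\|\,\|\nu\| \le L_g\|\bm{L}\|$ by Cauchy--Schwarz. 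This handles (i) in two lines.

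For case (ii), the idea is to test the subgradient inequality for $g$ at the point $H(x,y)$ against the value at $H(x,\bar x)$ (or rather a suitably chosen interior point near $F(\bar x)$). First I would write down, using $\nu\in\partial g(H(x,y))$, that for every $z\in\dom g$,
$$
\langle \nu, z - H(x,y)\rangle \le g(z) - g(H(x,y)) \le L_g\,\|z - H(x,y)\|
$$
whenever $z$ stays in $\dom g$, and more usefully that $\langle \nu, z - H(x,y)\rangle \le 0$ whenever $g(z)\le g(H(x,y))$. The key geometric input is that, since $H(x,y)\in\mathrm{bd}\,\dom g$ and $\dom g$ is convex, the point $H(x,\bar x)$ lies at distance roughly $\mathrm{dist}[F(\bar x),\mathrm{bd}\,\dom g] - \tfrac{\|\bm L\|}{2}\|\bar x - x\|^2$ inside $\dom g$ — more precisely $H(x,\bar x) = F(x) + \nabla F(x)(\bar x - x) + \tfrac{\bm L}{2}\|\bar x - x\|^2$, and one compares this with $F(\bar x)$ via convexity of the $f_i$'s and the descent lemma to see that $H(x,\bar x)$ is "below" $F(\bar x)$ coordinatewise up to the quadratic term, hence $g(H(x,\bar x))\le g(F(\bar x))$ after accounting for the $\bm L$ term. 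This lets one insert a genuine interior ball: there is $\rho = \mathrm{dist}[F(\bar x),\mathrm{bd}\,\dom g] - \tfrac{\|\bm L\|}{2}\|\bar x - x\|^2$ (or something comparable) such that $H(x,\bar x) + B(0,\rho)\subset \dom g$ after translating by the monotonicity direction. Plugging $z = H(x,\bar x) + \rho\,\nu/\|\nu\|$ into the subgradient inequality and using $g(z)\le g(H(x,y))$ (which follows because moving toward the negative orthant from an interior point keeps $g$ small, using monotonicity of $g$) yields
$$
\rho\,\|\nu\| \le \langle \nu, H(x,y) - H(x,\bar x)\rangle + (\text{Lipschitz correction}) \lesssim L_g\,\|H(x,y) - H(x,\bar x)\|.
$$
Then I would bound $\|H(x,y)-H(x,\bar x)\| \le \|\nabla F(x)\|_{\rm op}\,(\|y-x\|+\|\bar x-x\|) + \tfrac{\|\bm L\|}{2}(\|y-x\|^2 + \|\bar x - x\|^2)$, and finally use stationarity to convert a bound on $\|\nu\|$ into one on $\bm L^T\nu$: from $\nabla F(x)^T\nu = -t(y-x)$ and $t = \bm L^T\nu \le \|\bm L\|\|\nu\|$, one already gets $t\le \|\bm L\|\|\nu\|$ directly, so a bound on $\|\nu\|$ suffices. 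Collecting the constants — the factor $8$, the squared term $(\|\nabla F(x)\|_{\rm op} + (3\|x-y\| + \|\bar x - x\|)\tfrac{\|\bm L\|}{2})^2$, and the ratio $(\mathrm{dist}[F(\bar x),\mathrm{bd}\,\dom g] + \|\bm L\|\|\bar x - x\|^2/2)/\mathrm{dist}[F(\bar x),\mathrm{bd}\,\dom g]^2$ — will come from carefully tracking the crude estimates above (in particular $\rho$ in the denominator gets squared because one also needs to absorb a term of size $\|H(x,y)-H(x,\bar x)\|$ that itself contains $\|y-x\|$, and $\|y-x\|$ must be re-bounded using the descent property $g(F(y))\le g(F(x))$ and the curvature of the subproblem — this is where the extra power and the $3\|x-y\|$ coefficient appear).

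The main obstacle I expect is the bookkeeping in case (ii): one has to (a) show $H(x,\bar x)$ genuinely sits strictly inside $\dom g$ with a quantifiable margin $\rho>0$, correctly handling the quadratic perturbation $\tfrac{\bm L}{2}\|\bar x - x\|^2$ which pushes $H(x,\bar x)$ "up" relative to $F(\bar x)$ and could in principle eat the Slater margin — this is presumably why the hypothesis only gives a bound (the estimate degrades as $\|\bar x - x\|$ grows), and it forces the term $\mathrm{dist}[F(\bar x),\mathrm{bd}\,\dom g] + \|\bm L\|\|\bar x-x\|^2/2$ in the numerator; and (b) re-express $\|y-x\|$ in terms of known quantities, since the right-hand side as stated is allowed to depend on $\|x-y\|$, but the intermediate crude bound on $\|\nu\|$ naturally produces $\|y-x\|^2$ terms that have to be repackaged into the squared bracket. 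I would organize the argument so that the Lipschitz inequality for $g$ is applied exactly once (to control $g(H(x,y)) - g(z)$) and monotonicity is invoked exactly once (to guarantee the test point $z$ has $g(z)$ no larger than $g(H(x,y))$), keeping the constant tracking manageable. The rest is Cauchy--Schwarz and triangle inequalities.
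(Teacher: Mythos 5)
Your treatment of (i) matches the paper's (interior subgradients are bounded by $L_g$, then Cauchy--Schwarz). For (ii), your overall philosophy --- manufacture an interior test point from the Slater point, then play the subgradient inequality against the Lipschitz property of $g$ --- is also the paper's, but your plan has a genuine gap at its central step: you propose to center the interior ball at $H(x,\bar x)$ with radius $\rho = \mathrm{dist}[F(\bar x),\mathrm{bd}\ \dom g] - \tfrac{\|\bm L\|}{2}\|\bar x - x\|^2$. This quantity is nonpositive as soon as $\|\bar x - x\|$ is moderately large, in which case $H(x,\bar x)$ need not belong to $\dom g$ at all and the construction collapses --- yet the lemma must hold for \emph{every} $x \in F^{-1}(\dom g)$, with a bound whose denominator is $\mathrm{dist}[F(\bar x),\mathrm{bd}\ \dom g]^2$ and whose numerator carries the quadratic term with a \emph{plus} sign; neither feature can be produced by your $\rho$. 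You flag the danger (``could in principle eat the Slater margin'') but offer no mechanism to defuse it. The missing idea is the paper's Lemma \ref{LE:lower_bound_Hxw}: one does not use $\bar x$ itself but the rescaled point $w = x + \gamma(\bar x, x)(\bar x - x)$, where $\gamma(\bar x, x)\in(0,1]$ maximizes a concave-in-$t$ lower bound on the signed distance of $H(x, x+t(\bar x - x))$ to $\mathrm{bd}\ \dom g$. Because the harmful quadratic term scales like $t^2$ while the gain from moving toward $F(\bar x)$ scales like $t$, a small enough $t$ always yields a strictly interior point, with margin at least $\mathrm{dist}[F(\bar x),\mathrm{bd}\ \dom g]^2 \big/ \bigl(4\{\mathrm{dist}[F(\bar x),\mathrm{bd}\ \dom g] + \|\bm L\|\|x-\bar x\|^2/2\}\bigr)$. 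Establishing this rests on the concavity of the signed distance to the boundary of a convex set and on its monotonicity under translations by $\RR_+^m$ (Lemma \ref{LE:g_bd}), neither of which appears in your plan; this is precisely the ``delicate'' part.

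A secondary divergence: rather than bounding $\|\nu\|$ and then writing $\bm L^T\nu \le \|\bm L\|\,\|\nu\|$, the paper gets at $\bm L^T\nu$ directly via the exact identity $\tfrac{\bm L^T\nu}{2}\|w-y\|^2 = [H(x,w)-H(x,y)]^T\nu$ (the quadratic $H(x,\cdot)^T\nu$ attains its minimum at $y$), followed by $[H(x,w)-H(x,y)]^T\nu \le g(H(x,w)) - g(H(x,y)) \le L_g\|H(x,w)-H(x,y)\|$ and the lower bound $\|H(x,w)-H(x,y)\| \ge \mathrm{dist}[H(x,w),\mathrm{bd}\ \dom g]$, valid because $H(x,y)$ lies on the boundary. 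This is where the factor $8 = 2\times 4$ and the $3\|x-y\|$ coefficient come from. Your route could plausibly be completed once a valid interior point is in hand, but as written the key construction is absent and the stated radius is wrong.
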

{The full proof of this Lemma  is postponed to Appendix \ref{SE:appendixExplicitBound}.} A pretty direct consequence of 
{Lemma \ref{TH:explicitBound}}
 is a complexity result with explicit constants.

\begin{theorem}[Explicit complexity bound for \alg] \label{CO:explicitLipDomain}
Let $(x_k)_{k\in\mathbb{N}}$ be a sequence generated by \alg . Then, for any $x_*\in S$ {and} for all $k \geq 1$,  
\begin{equation}
{g (F(x_k)) - g(F(x_*)) \leq  \max\Big\{\|\bm{L}\|,\gamma \Big\}\, \frac{L_g\|x_0 - x_*\|^2}{2k},}
\label{EQ:complexity_f_explicit}
\end{equation}
where
 $$
	\gamma=8 \left(\|\nabla F(x_0)\|_{\rm op} + \frac{\|\bm{L}\|}{2}\left( 11\|x_0 - x_*\| + \|\bar{x} - x_*\| \right) \right)^2 
				\frac{ d_{\bar{x}} + \frac{\|\bm{L}\|}{2}\left( \| x_* - \bar{x}\|+ \| x_* - x_0\| \right)^2 } {d_{\bar{x}}^2 }
$$
{with} $d_{\bar{x}} = \mathrm{dist}[F(\bar{x}),\mathrm{bd}\ \dom g ]$. 
\end{theorem}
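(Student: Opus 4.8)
The plan is to combine the abstract complexity estimate of Theorem~\ref{TH:nonincreasing}(i) with the explicit multiplier bound of Lemma~\ref{TH:explicitBound}, the only subtlety being that the set $B_{x_0,x_*}=B(x_*,\|x_0-x_*\|)$ over which we must control the condition number $\CC_{g(F(x_0))}(\cdot)$ contains points $x$ that are not equal to $x_0$, so the quantities $\|\nabla F(x)\|_{\rm op}$, $\|x-y\|$ and $\|\bar x - x\|$ appearing in Lemma~\ref{TH:explicitBound} have to be bounded uniformly over $x\in B_{x_0,x_*}$ and over $y\in p(x)$ reachable by the algorithm. First I would recall from the proof of Theorem~\ref{TH:nonincreasing}(i) that along the generated sequence $\|x_k-x_*\|$ is nonincreasing and $g(F(x_k))\le g(F(x_0))$, so it suffices to bound $\inf\{\bm L^T\nu:\nu\in\mathcal V(x,y)\}$ for $x\in B_{x_0,x_*}$ with $g(F(x))\le g(F(x_0))$ and $y\in p(x)$; I will produce these bounds using Lemma~\ref{TH:explicitBound} in the two cases $H(x,y)\in\mathrm{int}\,\dom g$ and $H(x,y)\in\mathrm{bd}\,\dom g$.

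The main work is the geometric bookkeeping. For $x\in B_{x_0,x_*}$ one has $\|\nabla F(x)\|_{\rm op}\le\|\nabla F(x_0)\|_{\rm op}+L\|x-x_0\|$ with $L=\|\bm L\|$ (since each $\nabla f_i$ is $L_i$-Lipschitz, $\nabla F$ is $\|\bm L\|$-Lipschitz in operator norm), and $\|x-x_0\|\le\|x-x_*\|+\|x_*-x_0\|\le 2\|x_0-x_*\|$, giving $\|\nabla F(x)\|_{\rm op}\le\|\nabla F(x_0)\|_{\rm op}+2L\|x_0-x_*\|$. Next I need $\|x-y\|$ for $y\in p(x)$: from the descent inequality $g(F(y))-g(F(x))\le-\tfrac{\|y-x\|^2}{2}\bm L^T\nu$ this is not directly a bound, so instead I use the Fejér-type estimate in the proof of Theorem~\ref{TH:nonincreasing}, namely that $y=p(x)$ still satisfies $\|y-x_*\|\le\|x-x_*\|$ (apply inequality~\eqref{EQ:bound_decreasing} with $x$ in place of $x_{k-1}$), hence $\|x-y\|\le\|x-x_*\|+\|y-x_*\|\le2\|x-x_*\|\le2\|x_0-x_*\|$, and $\|\bar x-x\|\le\|\bar x-x_*\|+\|x_*-x\|\le\|\bar x-x_*\|+\|x_0-x_*\|$. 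Plugging these into part (ii) of Lemma~\ref{TH:explicitBound}: the factor $3\|x-y\|+\|\bar x-x\|\le 6\|x_0-x_*\|+\|\bar x-x_*\|+\|x_0-x_*\|=7\|x_0-x_*\|+\|\bar x-x_*\|$, so $\|\nabla F(x)\|_{\rm op}+(3\|x-y\|+\|\bar x-x\|)\tfrac L2\le\|\nabla F(x_0)\|_{\rm op}+2L\|x_0-x_*\|+\tfrac L2(7\|x_0-x_*\|+\|\bar x-x_*\|)=\|\nabla F(x_0)\|_{\rm op}+\tfrac L2(11\|x_0-x_*\|+\|\bar x-x_*\|)$, which matches the first bracket in the definition of $\gamma$. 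Similarly $\|\bar x-x\|^2\le(\|\bar x-x_*\|+\|x_*-x\|)^2\le(\|\bar x-x_*\|+\|x_0-x_*\|)^2$, producing the $d_{\bar x}+\tfrac L2(\|x_*-\bar x\|+\|x_*-x_0\|)^2$ factor in the numerator and $d_{\bar x}^2$ in the denominator. Since $L_g>0$ can be factored out, part (i) of Lemma~\ref{TH:explicitBound} gives $\bm L^T\nu\le L_g L=L_g\|\bm L\|$ in the interior case and part (ii) gives $\bm L^T\nu\le L_g\gamma$ in the boundary case; taking the max yields $\CC_{g(F(x_0))}(B_{x_0,x_*})\le L_g\max\{\|\bm L\|,\gamma\}$.

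Finally I substitute this bound into \eqref{EQ:complexity_f} of Theorem~\ref{TH:nonincreasing}(i):
\[
g(F(x_k))-g(F(x_*))\le\frac{\CC_{g(F(x_0))}(B_{x_0,x_*})}{2k}\|x_0-x_*\|^2\le\max\{\|\bm L\|,\gamma\}\,\frac{L_g\|x_0-x_*\|^2}{2k},
\]
which is exactly \eqref{EQ:complexity_f_explicit}. I expect the main obstacle to be the uniform control of the ingredients of Lemma~\ref{TH:explicitBound} over the whole ball $B_{x_0,x_*}$ rather than at the single point $x_0$ — in particular establishing the Fejér monotonicity $\|p(x)-x_*\|\le\|x-x_*\|$ for an arbitrary starting point $x\in B_{x_0,x_*}$ (not just along the algorithm's own trajectory), which follows by rereading inequality~\eqref{EQ:bound_decreasing} as a statement about one proximal step from any feasible $x$ with $g(F(x))\le g(F(x_0))$; once that is in hand the remaining estimates are the routine triangle-inequality chains sketched above, and one should also note the degenerate cases (if some $\bm L^T\nu=0$ the algorithm has already reached $S$ and \eqref{EQ:complexity_f_explicit} holds trivially, exactly as in the proof of Theorem~\ref{TH:nonincreasing}).
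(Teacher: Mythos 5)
Your proposal is correct and follows essentially the same route as the paper: combine Theorem~\ref{TH:nonincreasing}(i) with Lemma~\ref{TH:explicitBound}, using $\|x_k-x_*\|\le\|x_0-x_*\|$ and the $\|\bm{L}\|$-Lipschitz continuity of $\nabla F$ to bound $\|\nabla F\|_{\rm op}$, $\|x-y\|$ and $\|\bar x-x\|$ uniformly by triangle inequalities, which reproduces exactly the constants $11\|x_0-x_*\|+\|\bar x-x_*\|$ and $(\|x_*-\bar x\|+\|x_*-x_0\|)^2$ in $\gamma$. The paper's proof is terser (it only writes out the Jacobian bound and works along the trajectory rather than over the whole ball), but the details you supply, including the F\'ej\'er step $\|y-x_*\|\le\|x-x_*\|$, are precisely what is needed to "conclude by using Theorem~\ref{TH:nonincreasing} and Lemma~\ref{TH:explicitBound}."
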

\begin{proof}
%
			
			Fix $k>0$ in $\NN$. {Recall that $\|x_k - x_*\| \leq \|x_0 - x_*\|$ (see the proof of Theorem \ref{TH:nonincreasing})}. 
				The bound on the operator norm of the Jacobian is then computed as follows:
\begin{align*}
				\|\nabla F(x_k)\|_{\rm op} &= \|\nabla F(x_0) + \nabla F(x_k) - \nabla F(x_0)  \|_{\rm op} \\
				&\leq \|\nabla F(x_0)\|_{\rm op}+ \|\nabla F(x_k) - \nabla F(x_0)\|_{\rm op} \\
				&\leq \|\nabla F(x_0)\|_{\rm op}+ \|\bm{L}\|\|x_k - x_0\| \\
				&\leq \|\nabla F(x_0)\|_{\rm op}+ 2\|\bm{L}\|\|x_* - x_0\|,
\end{align*}
where we have used the fact that row $i$ of $\nabla F$ is $L_i$ Lipschitz, for $i = 1, \ldots, m$, implying that $\nabla F$ is $\|\bm{L}\|$ Lipschitz with respect to the Frobenius norm. One concludes by using Theorem \ref{TH:nonincreasing} and Lemma \ref{TH:explicitBound}.\end{proof}

\subsection{Consequences of the main result}
\label{SE:conseqComplexity}


\subsubsection{Complexity for Lipschitz continuous models} 
\label{SE:LIP}
It is very useful to make the following elementary observation:
\begin{align}
	\label{bound}
	\CC_\gamma(K)&\leq \sup \{\bm{L}^T \nu:\: x\in K,\, g(F(x)) \leq \gamma,\, y\in p(x),\nu\in \mathcal{V}(x,y) \}\nonumber\\
& { \leq  \sup \{\bm{L}^T \lambda:\: x\in K, \, g(F(x)) \leq \gamma, \, y\in p(x),\lambda\in \partial g({H}(x,y)) \}} \nonumber\\
&{\leq  \sup \{\bm{L}^T \lambda:\:  z\in \mathrm{dom}\ g,\lambda\in \partial g(z) \} ,}
\end{align}
{where the first inequality is because we just removed an infimum from the definition of $\CC_\gamma(K)$ in 
 {Lemma \ref{LE:bounded1}}, the second follows because for any $x \in F^{-1}(\dom g)$ and $y \in p(x)$, $\mathcal{V}(x,y) \subset \partial g\left( H(x,y) \right)$, and the third follows because for any $x\in F^{-1}(\dom g)$ and any $y\in p(x)$, we have $H(x,y) \in \dom  g$. The above upper bound is finite whenever $g$ has full domain and is globally Lipschitz continuous.}
Indeed, in that case $\sup_{z,\lambda}\{ \|\lambda\|:\lambda \in \partial g(z)\}\leq L_g$ (see e.g.,  \cite[Theorem 9.13]{Rockafellar:1998}). 
Assumptions~\ref{AS:existence_sub} and \ref{AS:Qualification} are automatically satisfied. An immediate application of the Cauchy-Schwartz inequality leads to the following bound for the condition number
\begin{equation}
\CC_{g(F(x_0))}(B_{x_0,x_*})\leq L_g\|\bm{L}\|, \quad \text{for all feasible data\, $x_0,x_*$}.
\label{cond}
\end{equation}
Thus we have the general result for composite Lipschitz continuous problems:
\begin{corollary}[Global complexity for Lipschitz continuous model]
 \label{CO:bounded}
	In addition to Assumptions~\ref{AS:convex} and \ref{AS:effectiveness}, suppose that $g$ is finite valued, and thus globally Lipschitz continuous with Lipschitz constant $L_g$. Let $(x_k)_{k\in\mathbb{N}}$ be a sequence generated by \alg, then  it converges to a minimizer and for any $x_* \in S$:
	\begin{align}
		g (F(x_k)) - g(F(x_*)) &\leq \frac{L_g\|\bm{L}\|}{2k} \|x_0 - x_*\|^2,\quad \forall k\geq1.
		\label{EQ:complexLipG}
	\end{align}	
\end{corollary}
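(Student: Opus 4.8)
The plan is to derive Corollary \ref{CO:bounded} as an essentially immediate specialization of Theorem \ref{TH:nonincreasing}, with the only real work being the verification that the standing hypotheses of that theorem are met and that the condition number collapses to the clean constant $L_g\|\bm{L}\|$. So the first step is to check that, when $g$ is finite valued and convex on all of $\RR^m$, the remaining standing assumptions of the paper come for free: $g$ is automatically locally Lipschitz, hence Lipschitz with some constant $L_g$ on bounded sets (and globally, once we also know it has a minimizer over the relevant range, or simply by invoking that a finite convex function on $\RR^m$ with a composite structure admitting a minimizer behaves nicely); $\dom g = \RR^m$ so $\text{int}\,\dom g = \RR^m$ and Assumption \ref{AS:Qualification} holds with any $\bar x$; Assumption \ref{AS:existence_sub} holds because each subproblem $\PPx$ minimizes the finite convex coercive-in-the-right-sense function $g\circ H(x,\cdot)$ — here one uses that $H(x,\cdot)$ has the quadratic term $\frac{\bm L}{2}\|y-x\|^2$ in every nonaffine coordinate together with monotonicity of $g$ to get a minimizer, exactly as remarked after Assumption \ref{AS:existence_sub}.

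The second step is the bound on the condition number. Here I would simply cite the chain of inequalities \eqref{bound}: for any $x$ in a compact set and any $\lambda\in\partial g(F(x))$ we have $\bm L^T\lambda\le\|\bm L\|\,\|\lambda\|$ by Cauchy--Schwarz, and $\|\lambda\|\le L_g$ because a convex function that is $L_g$-Lipschitz on $\RR^m$ has all its subgradients bounded in norm by $L_g$ (\cite[Theorem 9.13]{Rockafellar:1998}). Combining these with \eqref{bound} gives $\CC_\gamma(K)\le L_g\|\bm L\|$ for every compact $K$ and every $\gamma\ge\min g\circ F$; in particular, taking $K = B_{x_0,x_*}$ and $\gamma = g(F(x_0))$ yields \eqref{cond}, namely $\CC_{g(F(x_0))}(B_{x_0,x_*})\le L_g\|\bm L\|$.

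The final step is to substitute this into Theorem \ref{TH:nonincreasing}. Part (i) of that theorem gives, for all $k\ge1$ and all $x_*\in S$,
\begin{equation*}
g(F(x_k)) - g(F(x_*)) \leq \frac{\CC_{g(F(x_0))}(B_{x_0,x_*})}{2k}\|x_0-x_*\|^2 \leq \frac{L_g\|\bm L\|}{2k}\|x_0-x_*\|^2,
\end{equation*}
which is exactly \eqref{EQ:complexLipG}; and part (ii) of the theorem gives convergence of $(x_k)_{k\in\NN}$ to a point of $S$. This completes the proof.

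I do not expect a genuine obstacle here — the corollary is designed to be a corollary. The only point requiring a little care is the bookkeeping around the standing assumptions: one must make sure that Assumptions \ref{AS:existence_sub} and \ref{AS:Qualification}, which are invoked throughout Section \ref{SE:complexity} and hence implicitly used inside Theorem \ref{TH:nonincreasing}, are indeed automatic in the full-domain Lipschitz setting, so that the hypotheses of the corollary ("Assumptions \ref{AS:convex} and \ref{AS:effectiveness} plus $g$ finite valued") really do suffice to apply the theorem. Once that is spelled out, the estimate \eqref{cond} plus Theorem \ref{TH:nonincreasing} finish everything mechanically.
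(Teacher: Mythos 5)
Your proof is correct and follows the paper's own route exactly: the chain of inequalities \eqref{bound}, Cauchy--Schwarz, and the bound $\|\lambda\|\le L_g$ on subgradients of a globally Lipschitz convex function give $\CC_{g(F(x_0))}(B_{x_0,x_*})\le L_g\|\bm{L}\|$ as in \eqref{cond}, after which Theorem~\ref{TH:nonincreasing} yields both the rate and the convergence. The only cosmetic point is that the global Lipschitz continuity of $g$ need not be rederived from convexity --- it is already imposed by Assumption~\ref{AS:convex}(b) and becomes global once $\dom g=\RR^m$ --- but this does not affect the argument.
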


\begin{remark}{\rm
(a) Note that instead of using the Cauchy-Schwartz inequality, one could use H\"older's inequality if $g$ is Lipschitz with respect to a different norm. For example, suppose that each coordinate of $g$ is $L_g$ Lipschitz continuous (the others being fixed), or in other words that the supremum norm of the subgradients of $g$ is bounded by $L_g$. In this case,  a result similar to (\ref{EQ:complexLipG}) holds with $\|\bm{L}\|_1$ in place of the Euclidean norm. \\
(b) The bound given above is   sharper than the general bound provided in Theorem~\ref{CO:explicitLipDomain}.}
\end{remark}

\subsubsection{Proximal Gauss-Newton's method for 
{min-max} problems}
\label{SE:MINMAX}

Let us illustrate how Theorem \ref{TH:nonincreasing} can give new insights into the proximal Gauss-Newton method (PGNM) when $g$ is a componentwise maximum. As in Subsection~\ref{minmax} consider $g(z) =\max(z_1,\ldots,z_m)\label{EQ:maxG}$ for $z$ in $\RR^m$. Take $F$ as in Assumption~\ref{AS:convex}, set  $\displaystyle L = \max_{i \in \{1, \ldots, m\}} L_i $ and
\begin{align}
	\bm{L} = (L,\ldots,L)^T.
	\label{EQ:GNmodelL}
\end{align}
\alg\ writes:
\begin{align}
	x_{k+1} =\underset{y \in \RR^n}{ \argmin} \max_{i \in \left\{ 1,\ldots,m \right\}} \left\{f_i(x_k) + \nabla f_i(x_k)^T( y - x_k) \right\} + \frac{L}{2}\|y - x_k\|^2,
	\label{EQ:GNAlgo}
\end{align}
which is nothing else than PGNM applied to the problem $\PP$. 

The kernel $g$ is $1$ Lipschitz continuous with respect to the $L^1$ norm. As in Corollary~\ref{CO:bounded}, a straightforward application of H\"older's inequality leads to the following complexity result:
\begin{corollary}[Complexity for PGNM]
	\label{CO:GNMAX}
	In addition to Assumptions \ref{AS:convex}, \ref{AS:effectiveness}, suppose that $g$ is the componentwise maximum. Let $(x_k)_{k\in\mathbb{N}}$ be a sequence generated by PGNM, then for any $k\geq 1$ and any $x_*$ in $S$, we have 
	\begin{align}
		g (F(x_k)) - g(F(x_*)) &\leq \frac{L}{2k} \|x_0 - x_*\|^2,
		\label{EQ:complexLipG}
	\end{align}
	furthermore, the sequence $(x_k)_{k \in \NN}$ converges to a solution of 
	 $\PP$.
\end{corollary}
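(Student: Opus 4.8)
The plan is to view Corollary~\ref{CO:GNMAX} as a direct specialization of Theorem~\ref{TH:nonincreasing} together with the condition-number bound~\eqref{bound}, exactly as Corollary~\ref{CO:bounded} was obtained, but replacing the Cauchy--Schwarz step by H\"older's inequality adapted to the $\ell^1/\ell^\infty$ duality. First I would record that $g=\max_{i=1,\ldots,m}$ is convex, proper, lower semicontinuous, finite valued, and nondecreasing in each argument, so Assumption~\ref{AS:convex} holds; since $g$ has full domain, Assumptions~\ref{AS:existence_sub} and~\ref{AS:Qualification} are automatic, and with Assumption~\ref{AS:effectiveness} in force the hypotheses of Theorem~\ref{TH:nonincreasing} are met. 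Hence PGNM in the form~\eqref{EQ:GNAlgo} is exactly \alg\ for this $(g,F)$ with $\bm L=(L,\ldots,L)^T$, and the abstract complexity estimate~\eqref{EQ:complexity_f} applies.

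Next I would bound the condition number $\CC_{g(F(x_0))}(B_{x_0,x_*})$. By~\eqref{bound},
\[
\CC_{g(F(x_0))}(B_{x_0,x_*})\leq \sup\big\{\bm L^T\lambda:\ x\in B_{x_0,x_*},\ \lambda\in\partial g(F(x))\big\}.
\]
For $g=\max_{i}$, every $\lambda\in\partial g(z)$ lies in the unit simplex, so $\lambda\geq 0$ and $\|\lambda\|_1=1$. Since $\bm L=(L,\ldots,L)^T$, H\"older's inequality gives $\bm L^T\lambda\leq \|\bm L\|_\infty\,\|\lambda\|_1=L$. Therefore $\CC_{g(F(x_0))}(B_{x_0,x_*})\leq L$ for all feasible $x_0,x_*$. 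Substituting this into~\eqref{EQ:complexity_f} yields, for every $k\geq1$ and every $x_*\in S$,
\[
g(F(x_k))-g(F(x_*))\leq \frac{L}{2k}\|x_0-x_*\|^2,
\]
which is~\eqref{EQ:complexLipG}. The convergence of $(x_k)_{k\in\NN}$ to a point of $S$ is precisely item~(ii) of Theorem~\ref{TH:nonincreasing}.

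There is no genuine obstacle here: the only point requiring (trivial) care is the precise description of $\partial(\max_i)(z)$ as the convex hull of the active coordinate vectors, hence its inclusion in the unit simplex, which is where the numerator collapses from $L_g\|\bm L\|$ (as in Corollary~\ref{CO:bounded}) to $\max_i L_i$. One should also note in passing that the identification of~\eqref{EQ:GNAlgo} with PGNM uses $\max_{i}\{a_i+b_i t\}+ct = \max_i\{a_i+b_i t + ct\}$, i.e.\ the common quadratic term can be pulled inside or outside the max, which is what makes the $\bm L=(L,\ldots,L)^T$ model coincide with the single-constant Gauss--Newton linearization.
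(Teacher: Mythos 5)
Your proposal is correct and follows essentially the same route as the paper: the paper obtains the corollary by observing that the subgradients of the coordinatewise maximum make the H\"older step in the bound \eqref{bound} collapse to $\|\bm{L}\|_\infty = L$, exactly as you do, and then invokes Theorem~\ref{TH:nonincreasing}. Your write-up merely fills in the details the paper leaves implicit (the simplex description of $\partial(\max)$ and the identification of \eqref{EQ:GNAlgo} with \alg\ by pulling the common quadratic term out of the max), and both steps are carried out correctly.
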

\begin{remark}\label{RE:GNMAX} {\rm (a) As far as we know, this complexity result for the classical PGNM is new. We suspect that similar results could be derived for much more general kernels $g$, this is a matter for future research.\\
(b)  Note that the accelerated algorithm for PGNM described in \cite{Drusvyatskiy:20162} would achieve a convergence rate of the form $\displaystyle 2\sqrt{m}L\|x_0-x_*\|^2/k^2$. Indeed, the multiplicative constant appearing in the convergence rate of \cite[Theorem 8.5]{Drusvyatskiy:20162} involves the Lipschitz constant of $\nabla F^T$ measured in term of operator norm. We do not know if the constant $\sqrt{m}$ (which can be big for some problems) could be avoided, and thus, at this stage of our understanding, we cannot draw any comparative conclusion between these two complexity results.\\
(c) 
 Although the worst-case complexity estimate of \alg\  is the same as the one for PGNM, we have observed a dramatic difference in practice, see Section~\ref{SE:numerical}. The intuitive reason is quite obvious since \alg\ is much more adapted to the geometry  of the problem. Better performances might also be connected to the estimate 
  \eqref{EQ:complexity_f2} given in Remark~\ref{r}. 
 }
\end{remark}

\subsubsection{Complexity of the Moving balls method.}
\label{SE:MB}
We provide here 
{an} enhanced nonsmooth version of the moving balls method, introduced in Subsection~\ref{movi}, which allows to handle sparsity constraints. Consider the  following  nonlinear convex programming problem:
\begin{align}
       \min_{x \in \RR^n} \,&f(x)+h(x)\nonumber\\
 \mathrm{s.t.}\,&f_i(x) \leq 0, \, i=
 1, \ldots, m.
	\label{EQ:modelNLP2}
\end{align}
where {$f:\RR^n\rightarrow \RR$ is convex, differentiable with $L_f$ Lipschitz continuous gradient}, each $f_i{:\RR^n\rightarrow \RR}$ is defined as in Assumption \ref{AS:convex}, and $h \colon \RR^n \to \RR$ is a convex lower semicon\-ti\-nuous function, for instance $h=\|\cdot\|_1.$


Choosing $g$ and $F$ adequately (details can be found in the proof of Corollary \ref{CO:MovingBall}), \alg\ gives an algorithm combining/improving ideas presented in \cite{Auslender:10,shefi2016dual}\footnote{Observe that the subproblems are simple convex quadratic problems.}:
\begin{align}
	x_{k+1} \in {\underset{y\in\RR^n}{\mathrm{argmin}}}\quad & f(x_k) + 
	{\nabla f(x_k)^T( y - x_k)}
	 + \frac{L_f}{2}\|y - x_k\|^2 + h(y) \nonumber \\
	\mathrm{s.t.}\quad&f_i(x_k) + 
	{\nabla f_i(x_k)^T( y - x_k)}
	+ \frac{L_i}{2}\|y - x_k\|^2 \leq 0,\, i=1,\ldots,m.
	\label{EQ:MBalgo}
\end{align}

Our main convergence result in Theorem \ref{TH:nonincreasing} can be combined with Lemma \ref{LE:bounded1} to recover and extend the convergence results of \cite{Auslender:10,shefi2016dual}. More importantly we derive explicit complexity bounds of the form $O(\frac{1}{k})$. We are not aware of any such quantitative result for general nonlinear programming problems.
\begin{corollary}[Complexity of the moving balls method]
	\label{CO:MovingBall}
	Assume that $h$ is $L_h$ Lipschitz continuous and that there exists $\bar{x}$ in $\RR^n$ such that $f_i(\bar{x}) < 0$, $i = 1, \ldots, m$. Assume that (\ref{EQ:modelNLP2}) has a solution $x^*$ and that there exists $i$ in $\{1,\ldots,m\}$ such that $L_i > 0$. Let $(x_k)_{k \in \NN}$ be 
	a sequence generated by \alg. Then for any $k \geq 1$, $x_k$ is feasible\footnote{For the original problem (\ref{EQ:modelNLP2})} and 
\begin{equation}
				(f+h)(x_k) - (f+h)(x_*) \leq  \max\Big\{1,\zeta\Big\} \frac{\|\bm{L}\|(1+ L_h)\|x_0 - x_*\|^2}{2k},
\label{EQ:complexity_MB_explicit}
\end{equation}
where {$\bm{L} = (L_f,L_1,\ldots,L_m,0,\ldots,0)^T\in\RR^{n+m+1}$ and }
 $$\zeta=8\|\bm{L}\| \left(\frac{\|\nabla F(x_0)\|_{\rm op}}{\|\bm{L}\|} + \frac{1}{2}\left( 11\|x_0 - x_*\| + \|\bar{x} - x_*\| \right) \right)^2 \\
				 \frac{ \bar{f} + \frac{\|\bm{L}\|}{2}  \left( \| x_* - \bar{x}\|+ \| x_* - x_0\| \right)^2 } {\bar{f}^2 }$$
with $\bar{f} = |\max_{i=1, \ldots, m}\left\{ f_i(\bar{x}) \right\}|$. 
\end{corollary}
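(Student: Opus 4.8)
The plan is to cast problem \eqref{EQ:modelNLP2} as an instance of $\PP$ and then invoke Theorem~\ref{CO:explicitLipDomain} directly, tracking how the abstract constants specialize. Concretely, I would set
$$F(x) = \bigl(f(x),\, f_1(x),\ldots,f_m(x),\, x_1,\ldots,x_n\bigr)^T \in \RR^{n+m+1}$$
and
$$g(s, y_1,\ldots, y_m, z_1,\ldots, z_n) = s + i_{\RR_-^m}(y_1,\ldots,y_m) + h(z_1,\ldots,z_n),$$
so that $g\circ F = f + h + i_{\{f_i\le 0\}}$ and the minimization of $g\circ F$ is exactly \eqref{EQ:modelNLP2}. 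One checks Assumption~\ref{AS:convex}: the affine components $x_j$ have $L_j = 0$ (so no monotonicity is required in those coordinates), the $f_i$ are convex with $L_i$-Lipschitz gradient and $g$ is nondecreasing in those arguments (the indicator $i_{\RR_-}$ is nonincreasing in the ``wrong'' sense, so here one must be careful — actually $i_{\RR_-}(\cdot)$ \emph{is} nondecreasing as a function into $\RR\cup\{+\infty\}$ since it jumps from $0$ to $+\infty$ as the argument increases, which is the required monotonicity), and $f$ has $L_f$-Lipschitz gradient with $g$ nondecreasing in $s$. The vector $\bm{L}$ is then $(L_f, L_1,\ldots,L_m,0,\ldots,0)^T$ as stated. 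Assumption~\ref{AS:effectiveness} is the assumed existence of $x^*$; Assumption~\ref{AS:Qualification} is precisely the Slater point $\bar x$ with $f_i(\bar x)<0$ (since then $F(\bar x)$ has its middle block strictly negative and $h$ is finite everywhere, so $F(\bar x)\in\operatorname{int}\dom g$); and Assumption~\ref{AS:existence_sub} holds because each subproblem \eqref{EQ:MBalgo} is a quadratic program over an intersection of balls which is feasible (again by the Slater point, cf. Proposition~\ref{PR:qualifSub}) and coercive enough to attain its minimum. I would also note that \alg\ specialized to this $g, F$ is exactly \eqref{EQ:MBalgo}, and that feasibility of every $x_k$ for \eqref{EQ:modelNLP2} follows from item~(1) of Lemma~\ref{LE:sub_property}: $x_{k+1}\in\dom g(H(x_k,\cdot))\subset \dom g\circ F$, hence $f_i(x_{k+1})\le 0$ for all $i$.

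Next I would identify the Lipschitz constant of $g$ on its domain. On $\dom g$, $g(s,y,z) = s + h(z)$, so for two points in the domain $|g - g'| \le |s - s'| + |h(z) - h(z')| \le |s-s'| + L_h\|z - z'\| \le \sqrt{1 + L_h^2}\,\|(s,y,z) - (s',y',z')\|$... but that is not quite the claimed $(1 + L_h)$. The cleaner route is to observe that $g = g_1 \oplus g_2$ with $g_1(s) = s$ being $1$-Lipschitz and $g_2(z) = h(z)$ being $L_h$-Lipschitz acting on disjoint blocks; using the $\ell^1$-type aggregation (as in Remark after Corollary~\ref{CO:bounded}, part~(a)) one gets $L_g = 1 + L_h$ when subgradients are measured appropriately, or one simply absorbs the difference by using $1 + L_h \ge \sqrt{1+L_h^2}$ to keep a valid (slightly loose) bound. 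I would make this precise by noting $\partial g(z) = \{1\}\times N_{\RR_-^m}(\cdot)\times\partial h(\cdot)$ and on $\dom g\circ F$ the normal-cone part is what it is, but the relevant quantity in the complexity bound is $\bm{L}^T\nu$; since $\bm L$ has zero entries on the $h$-block, $\bm{L}^T\nu$ only involves the $f$ and $f_i$ coordinates of the multiplier, and one bounds those exactly as in Lemma~\ref{TH:explicitBound}. So $L_g = 1 + L_h$ is the constant entering the final estimate.

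Then I would simply substitute into \eqref{EQ:complexity_f_explicit}. Theorem~\ref{CO:explicitLipDomain} gives
$$(f+h)(x_k) - (f+h)(x_*) \le \max\{\|\bm L\|, \gamma\}\,\frac{L_g\|x_0 - x_*\|^2}{2k}$$
with $\gamma$ as displayed there. Pulling a factor of $\|\bm L\|$ out of $\max\{\|\bm L\|,\gamma\}$ — writing $\max\{\|\bm L\|,\gamma\} = \|\bm L\|\max\{1, \gamma/\|\bm L\|\}$ — and setting $\zeta = \gamma/\|\bm L\|$, and using $d_{\bar x} = \operatorname{dist}[F(\bar x), \operatorname{bd}\dom g] = |\max_i f_i(\bar x)| = \bar f$ (because $\operatorname{bd}\dom g$ in the $y$-block is where some $y_i = 0$, and $F(\bar x)$'s $y$-block is $(f_1(\bar x),\ldots,f_m(\bar x))$ all strictly negative, so the distance to the boundary is exactly the smallest $|f_i(\bar x)|$, i.e. $\bar f$), I recover exactly the claimed formula for $\zeta$ and hence \eqref{EQ:complexity_MB_explicit}, with the $L_g\|\bm L\| = (1+L_h)\|\bm L\|$ prefactor. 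The main obstacle, and the only genuinely nontrivial verification, is the distance computation $d_{\bar x} = \bar f$: one has to be careful that the Euclidean distance from $F(\bar x)$ to $\operatorname{bd}\dom g$ in $\RR^{n+m+1}$ is governed only by the constraint block (the $s$-coordinate and $z$-coordinates have full range in $\dom g$, so they do not contribute), and that $\operatorname{int}\dom g = \RR\times \operatorname{int}\RR_-^m\times \RR^n = \RR\times(-\infty,0)^m\times\RR^n$ is nonempty at $F(\bar x)$ — which is where the strict feasibility hypothesis $f_i(\bar x)<0$ is used. Everything else is bookkeeping: confirming the algorithm's subproblem matches \eqref{EQ:MBalgo}, confirming feasibility of iterates via Lemma~\ref{LE:sub_property}, and plugging constants into Theorem~\ref{CO:explicitLipDomain}.
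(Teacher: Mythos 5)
Your proposal is correct and follows essentially the same route as the paper: the same choice of $F$ and $g$ (with the affine identity block assigned smoothness modulus $0$), verification of Assumptions \ref{AS:convex}--\ref{AS:Qualification}, the Lipschitz constant $1+L_h$ for $g$ on its domain, the identification $d_{\bar x}=\bar f$, and a direct substitution into Theorem \ref{CO:explicitLipDomain} with the factorization $\max\{\|\bm L\|,\gamma\}=\|\bm L\|\max\{1,\gamma/\|\bm L\|\}$. The extra care you take with the distance computation and with $\sqrt{1+L_h^2}\le 1+L_h$ only makes explicit what the paper leaves as a one-line remark.
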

\begin{proof}
				We set $F \colon x \mapsto (f(x),f_1(x), \ldots, f_m(x),x)$ and $g\colon (z_0,z_1, \ldots, z_m, z) \mapsto z_0 + h(z) + \sum_{i=1}^m i_{\RR_-}(z_i)$. With this choice, we obtain problem (\ref{EQ:modelNLP2}) and algorithm (\ref{EQ:MBalgo}) (we set the smoothness modulus of the identity part in $F$ to $0$, whence the value of ${\bm L}$). Assumptions \ref{AS:convex}, \ref{AS:effectiveness} and \ref{AS:Qualification} are clearly satisfied. Assumption \ref{AS:existence_sub} is satisfied since one of the $L_i$ is positive, {indicating} that the subproblems in (\ref{EQ:MBalgo}) are  strongly convex  {and} have bounded constraint sets. Finally $g$ is $(1+L_h)$ Lipschitz continuous on its domain. Hence 
				{Theorem} \ref{CO:explicitLipDomain} can be applied. It remains to notice that $\bar{f} = \mathrm{dist}[F(\bar{x}), \mathrm{bd}\ \mathrm{dom}\ g]$ to conclude the proof.
\end{proof}

\subsubsection{Forward-backward splitting algorithm}
\label{SE:FB}
To illustrate further the flexibility of our method, we explain how our approach allows to recover the classical complexity results of the classical forward-backward splitting algorithm {within the convex setting}. 
 Let $f\colon \mathbb{R}^n \to \mathbb{R}$ be a continuously differentiable convex function with $L$ Lipschitz gradient and $h\colon \mathbb{R}^n \to \mathbb{R} \cup \{+\infty\}$ be a proper lower semicontinuous convex function. Consider the following problem
\begin{align}
	\inf_{x \in \mathbb{R}^n} f(x) + h(x).
	\label{EQ:smoothPlusNonSmooth}
\end{align}
This problem is a special case of the optimization objective of problem $\PP$, by choosing

\begin{equation}F:\left\{\begin{array}{lcl}
	\RR^n &\rightarrow & \RR^{n+1} \nonumber \\
	x & \rightarrow  &(f(x),x)
	\label{EQ:FBmodelF}
\end{array}\right.
\end{equation}
and 
\begin{equation}
	\label{EQ:FBmodelg}
	g:
	\left\{
		\begin{array}{lcl}
			\RR\times \RR^{n} &\rightarrow & 
			{\mathbb{R}\cup \{+\infty\}} \\
			(a,z) & \rightarrow  &a+h(z)
		\end{array}\right.
\end{equation}
%
%
Finally, setting
\begin{align}
	\bm{L} = \left( 
	\begin{array}{c}
		L\\
		0\\
		\vdots\\
		0
	\end{array}
	\right) \in \RR^{n+1},
	\label{EQ:FBmodelL}
\end{align}
\alg\ eventually writes:
\begin{align}
	x_{k+1} = {\underset{{y \in \RR^n}}{\argmin}}\quad f(x_k) +
	 {\nabla f(x_k)^T( y - x_k)}
	 + h(y) + \frac{L}{2}\|y - x_k\|^2,
	\label{EQ:FBAlgo}
\end{align}
which is exactly the forward-backward splitting algorithm. It is immediate to check that Assumptions \ref{AS:convex}, \ref{AS:effectiveness}, \ref{AS:existence_sub} and \ref{AS:Qualification} hold true as long as the minimum is achieved in (\ref{EQ:smoothPlusNonSmooth}) and that $\dom h$ has nonempty interior with $h$ being Lipschitz continuous on $\dom h$\footnote{Lipschitz continuity is actually superfluous for Theorem \ref{TH:nonincreasing} to hold. }. Given the form of $g$ in equation (\ref{EQ:FBmodelg}) and $\bm{L}$ in (\ref{EQ:FBmodelL}), Theorem~\ref{TH:nonincreasing} yields the classical convergence and complexity results for the forward-backward algorithm (see e.g.,  \cite{Combettes:2005} for convergence and \cite{Beck:2009} for complexity): $x_k$ converges to a minimizer and 
\begin{align}
		(f+h)(x_k) -(f+h)(x_*) \leq \frac{L \|x_0 - x_*\|^2}{2k}, \quad \forall k \geq 1.		\label{EQ:complexFB}
\end{align}


\begin{remark}[Complexity estimates depend on the formulation] {\rm Assume that $h$ is the indicator function of a ball $B(a,r)$ so that the above method is the gradient projection method on this ball and its complexity is recovered by equation (\ref{EQ:complexFB}). Another way of modeling the problem is to consider minimizing $g_1\circ F_1$ taking the following forms
  \begin{align}
 & F_1\, \colon \mathbb{R}^n\rightarrow \mathbb{R}^2\nonumber\\
&  \ \ \ \ \ x\rightarrow \left(
 \begin{array}{c}
 f(x)\\
 f_1(x)
 \end{array}\right),\nonumber
 \end{align}
 where $f_1(x) =   \|x-a\|^2 - r^2$, and
 \begin{align}
 & g_1\, \colon \mathbb{R}^2 \rightarrow \mathbb{R}{\ \cup\ \{+\infty\}}\nonumber\\
 &\ \ \ \ \ \left(\begin{array}{c}
 z_1\\
 z_2
 \end{array}\right)\rightarrow z_1 + h_1(z_2),\nonumber
 \end{align}
 where $h_1$ is the indicator function of $\mathbb{R_-}$, so that for every $x\in\mathbb{R}^n$ it holds $g\circ F(x) = g_1\circ F_1(x)$ and \alg\ for $g_1\circ F_1$ is equivalent to the moving balls method.  Since 
 $$f_1(x) + 
 {\nabla f_1(x)^T(y-x)}
  + \frac{L_1}{2}\|y - x\|^2 = f_1(y),\ \forall  x,y\in\mathbb{R}^n,$$
 where $L_1=2$ is the Lipschitz constant of the gradient of $f_1$, it follows that for any $x\in\mathbb{R}^n$ it holds
 $$\underset{\text{gradient projection method}}{\underbrace {\underset{y\in\mathbb{R}^n}{{\mathrm{argmin}}} \bigg(\frac{1}{2} \left\| y - x + \frac{\nabla f(x)}{L}\right\|^2 + h(y)\bigg ) }}= \underset{\text{moving balls method}}{\underbrace { \underset{y\in\mathbb{R}^n}{{\mathrm{argmin}}}  \bigg(\frac{1}{2} \left\| y - x + \frac{\nabla f(x)}{L}\right\|^2 + h_1(f_1(y))\bigg) }}.$$
Thus, if the initial point is the same, the sequence of the moving balls method is the same as that of gradient projection method. However, considering the third item of Remark \ref{r}, the best estimate our analysis can provide for the moving-balls method is
 \begin{eqnarray}
	\label{EQ:complexMBBis}
	& & (f+h_1)(x_k) - (f+h_1)(x_*)  \\
	& \leq & {\frac{\underset{i=1,\ldots,k}{\mathrm{max}} \big\{L + 2 \mathrm{min}\{  \nu_1\in\mathbb{R}:  (1,\nu_1) \in \mathcal{V}(x_{i-1},x_i)\}\big\}}{2 k} \|x_0 - x_*\|^2,\ \forall k \geq 1.}\nonumber
 \end{eqnarray}
 This is different from the complexity of the gradient projection method in (\ref{EQ:complexFB}). Indeed the {infimum over} the variable $\nu_1$ 
 appearing in the numerator is nonnegative. {Furthermore it is non zero in many situations because otherwise the constraints would never been 
 binding. As an example, one can consider a linear objective function for which the infimum in the numerator in (\ref{EQ:complexMBBis}) is 
 strictly positive}. 
 
Observe that the numerator in (\ref{EQ:complexMBBis}) is strictly greater than $L$ which is the classical constant attached  the projected gradient, even though both algorithms are actually the same. This highlights the second item of Remark \ref{r} on the dependance of the estimate on the choice of equivalent composite models. 
}
\end{remark}

\section{Backtracking and linesearch}\label{s:search}
In practice, the  collection $\bm{L}$ of Lipschitz constants  may not be known or  efficiently computable. Lipschitz continuity might not even be  global. To handle these fundamental cases, we provide now our algorithmic scheme in (\ref{Numerical_Scheme}) with a  linesearch procedure (see e.g., \cite{nocedal,Beck:2009}).

First let us define a space search for our steps\footnote{Actually the inverse of our steps.}
\begin{eqnarray}
\Gamma:=\Big\{ ({\alpha}_1,\ldots,{\alpha}_m)\in\mathbb{R}^m_+ :\: {\alpha}_i
= 0\ \text{if} \ L_i = 0,\ {\alpha_i > 0\ \text{if}\ L_i > 0},\ i=1,\ldots,m\Big\}, \nonumber
\end{eqnarray}
and for every $\bm{\alpha}\in\Gamma$ we
set $$H_{{\bm{\alpha}}}(x,y):= F(x) + \nabla F(x)(y-x) + \frac{{\bm{\alpha}}}{2}\|y - x\|^2,\ \forall (x,y)\in\mathbb{R}^n\times\mathbb{R}^n.$$
In order to design an algorithm with this larger family of surrogates, we need  a stronger version of Assumption \ref{AS:existence_sub}.
\begin{assumption}
	\label{AS:existence_sub3}
For any $x\in F^{-1}(\dom g)$ and every ${\bm{\alpha}}\in\Gamma$, the function $g\circ H_{{\bm{\alpha}}}(x,\cdot)$ has a minimizer.
\end{assumption}
\noindent For every $x\in F^{-1}(\dom g)$, the basic subproblem we shall use is defined for any ${\bm{\alpha}}\in\Gamma$,
\begin{eqnarray}
\big(\mathcal{P}_{{\bm{\alpha}},x}\big):\ \ \ \ p_{{\bm{\alpha}}}(x):= \underset{y\in\mathbb{R}^n}{\mathrm{argmin}}\ \ g\circ H_{{\bm{\alpha}}}(x,y).\nonumber
\end{eqnarray}
The \alg\ algorithm with backtracking 
{step sizes}
 is defined as:
\newpage

 \begin{mdframed}[style=MyFrame]
 \begin{center}
 {\bf Multiproximal method with backtracking 
{step sizes}
 }
 \end{center}
 \bigskip
 \noindent Take $\ x_0 \in F^{-1}(\dom g),\ \bm{\alpha}_0\in \Gamma\ \text{and}\ \eta>1$. Then iterate for $k \in \NN$:
\begin{align}
\label{Numerical_Scheme_Back}
\left.
	\begin{array}{ll}
		step\ 1.&\text{set } \tilde{\bm{\alpha}}=\bm{\alpha}_k,\, \tilde{x} \in p_{\bm{\tilde{\alpha}}}(x_k).\\
		step\ 2.& \text{while } {\text{the inequality}\ H_{\tilde{\bm{\alpha}}}(x_k, \tilde{x}) >  F(\tilde{x})\ \text{is not satisfied}}:\\
		&
		\begin{array}{lll}
			&\text{for}&i=1,\ldots,m:\\
			&&\text{if } f_i(\tilde{x}) > f_i(x_k) + 
			{\nabla f_i(x_k)^T(\tilde{x} - x_k)}
			+ \frac{
			{\tilde{\alpha}_i}}{2}\|\tilde{x} - x_k\|^2:\\
			&&\quad\text{set } 
			{{\tilde{\alpha}}_i \leftarrow \eta {\tilde{\alpha}}_i}\\
			&\text{set }&\tilde{x} \in p_{\bm{\tilde{\alpha}}}(x_k).
		\end{array}\\
		&\text{set }	\bm{\alpha}_{k+1}=\tilde{\bm{\alpha}}.\\
	 	step \ 3.&\text{if}\ x_{k}\in p_{\bm{\alpha}_{k+1}}(x_{k}):\\
	 	&\quad x_{k+1} = x_{k}\\
	 	&\text{else :}\\
		&\quad x_{k+1}=\tilde{x}
	\end{array}
\right\rbrace
\end{align}
\end{mdframed}

\begin{remark}[A finite while-loop]\label{RE:backtracking_Lk}{\rm (a) One needs to make sure that the scheme is well defined and that each while-loop stops after finitely many tries. Under Assumption \ref{AS:convex},  it is naturally the case. 
To see this let $(\bm{\alpha}_k)_{k\in\mathbb{N}}$ be a sequence generated by the scheme in (\ref{Numerical_Scheme_Back}), and let $\bm{L}\in\mathbb{R}^m_+$ be the collection of Lipschitz constants associated to $\nabla F$. Then, for every integer $k$, the following statements must obviously hold for each $i=1,\ldots,m$:
	$$(\bm{\alpha}_{k})_i\leq \max\left\{\eta L_i, (\bm{\alpha}_{0})_i  \right\}.$$
(b) More importantly we shall also see that local Lipschitz continuity and coercivity also ensure that the while-loop is finite, see Theorem~\ref{t:gen} below.}
\end{remark}

\noindent Arguments similar to those of Subsection~\ref{s:cr} allow to derive  chain rules and to eventually consider the following sets (note that {a proposition equivalent to Proposition \ref{PR:qualifSub}} holds for the problem $\big(\mathcal{P}_{{\bm{\alpha}},x}\big)$).
\begin{definition}[Lagrange multipliers of the subproblems]
Given any fixed point $x\in F^{-1}(\dom g)$, any ${\bm{\alpha}}\in\Gamma$ and any $y\in p_{{\bm{\alpha}}}(x)$, we set
$$\mathcal{V}_{{\bm{\alpha}}}(x,y): = \{\nu\in g(H_{{\bm{\alpha}}}(x,y)): 
{\nabla _y H_{{\bm{\alpha}}}(x,y)^T}
 \nu = 0\}.$$
\end{definition}
\noindent We are now able to extend Theorem \ref{TH:nonincreasing} to a larger setting:  Lipschitz constants do exist but they are unknown to the user.
\begin{theorem}[\alg\ with backtracking]
	\label{TH:nonincreasingBack}
	Suppose that 
	{Assumptions} \ref{AS:convex}, \ref{AS:effectiveness}, \ref{AS:Qualification} and \ref{AS:existence_sub3} hold.
	Let $(x_k)_{k\in\mathbb{N}}$ and $(\bm{\alpha}_k)_{k\in\mathbb{N}}$ be any sequences generated by the algorithmic scheme in (\ref{Numerical_Scheme_Back}). Then, for every $x_*\in S$ and any sequence $(\nu_k)_{k\in\note{\mathbb{N}}}$ such that $\nu_j\in \mathcal{V}_{\bm{\alpha}_j}(x_{j-1},x_j)$ for all $j\geq 1$, one has,
	\begin{eqnarray}
	g\circ F(x_k) - g\circ F(x_*) \leq  \frac{ \underset{j=1,\ldots,k}{\mathrm{max}} \{ \hat{\bm{\alpha}}^T \nu_j\}}{2k}\|x_0 - x_*\|^2,\ \forall k\geq 1,
	\label{EQ:complexity_back}
	\end{eqnarray}
	where $\hat{\bm{\alpha}} \in \Gamma$ is the vector 
	{whose} entries are given by the upper bound in Remark \ref{RE:backtracking_Lk}. Furthermore, the sequence $(x_k)_{k\in \NN}$ converges to a point in S.
\end{theorem}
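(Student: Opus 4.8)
\textbf{Proof strategy for Theorem \ref{TH:nonincreasingBack}.}

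The plan is to mimic the two–part proof of Theorem \ref{TH:nonincreasing}, keeping careful track of the fact that the quadratic coefficient vector now changes from iteration to iteration, being $\bm{\alpha}_k$ at step $k$ rather than the fixed $\bm L$. First I would record the crucial consequence of the backtracking loop in step $2$: upon exit we have $F(\tilde x)\le H_{\tilde{\bm{\alpha}}}(x_k,\tilde x)$ coordinatewise, so by the monotonicity of $g$ (Assumption \ref{AS:convex}) the analogue of Lemma \ref{LE:sub_property}(1) holds with $H$ replaced by $H_{\bm{\alpha}_{k+1}}$; in particular $g(F(x_{k+1}))\le g(H_{\bm{\alpha}_{k+1}}(x_k,x_{k+1}))$ and $g(H_{\bm{\alpha}_{k+1}}(x_k,x_k))=g(F(x_k))$. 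I would also note the componentwise bound $(\bm{\alpha}_k)_i\le\max\{\eta L_i,(\bm{\alpha}_0)_i\}=\hat{\bm{\alpha}}_i$ from Remark \ref{RE:backtracking_Lk}, and the sign property $\bm{\alpha}^T\nu\ge0$ for $\nu\in\partial g(z)$ (the analogue of \eqref{LE:Lnv_positive}, valid because $(\bm{\alpha}_k)_i>0$ exactly when $L_i>0$ and $g$ is nondecreasing in those coordinates).

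Next I would reproduce the chain of inequalities \eqref{EQ:bound_decreasing}. Fix $x_*\in S$, $k\ge1$, and $\nu_k\in\mathcal V_{\bm{\alpha}_k}(x_{k-1},x_k)$. The stationarity condition $\nabla_y H_{\bm{\alpha}_k}(x_{k-1},x_k)^T\nu_k=0$ reads $\nabla F(x_{k-1})^T\nu_k+(x_k-x_{k-1})(\bm{\alpha}_k^T\nu_k)=0$, so the elementary identity \eqref{EQ:strong_a} applied to $y\mapsto H_{\bm{\alpha}_k}(x_{k-1},y)^T\nu_k$ gives
\begin{equation}
H_{\bm{\alpha}_k}(x_{k-1},x_k)^T\nu_k = H_{\bm{\alpha}_k}(x_{k-1},x_*)^T\nu_k-\tfrac{\bm{\alpha}_k^T\nu_k}{2}\|x_k-x_*\|^2.\nonumber
\end{equation}
Using convexity of $g$, expanding $H_{\bm{\alpha}_k}(x_{k-1},x_*)^T\nu_k$ explicitly, and using coordinatewise convexity of $F$ together with $(\nu_k)_i\ge0$ when $L_i>0$, exactly as in steps $(a)$–$(d)$ of \eqref{EQ:bound_decreasing}, I obtain
\begin{equation}
g\circ F(x_k)-g\circ F(x_*)\le \tfrac{\bm{\alpha}_k^T\nu_k}{2}\big(\|x_{k-1}-x_*\|^2-\|x_k-x_*\|^2\big).\nonumber
\end{equation}
Since $\bm{\alpha}_k\le\hat{\bm{\alpha}}$ coordinatewise and $\nu_k\ge0$ on the relevant coordinates while $(\bm{\alpha}_k)_i=\hat{\bm{\alpha}}_i=0$ on the others, we have $\bm{\alpha}_k^T\nu_k\le\hat{\bm{\alpha}}^T\nu_k$. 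Handling the stationary case (if some $\hat{\bm{\alpha}}^T\nu_k=0$ then $x_k\in S$ and, by the descent property and Lemma \ref{LE:sub_property}, the algorithm has stopped at a minimizer) exactly as in Theorem \ref{TH:nonincreasing}, and otherwise summing the telescoping inequalities over $j=1,\dots,k$, using monotonicity of $(g\circ F(x_j))_j$ to pass from the average to $g\circ F(x_k)$, yields \eqref{EQ:complexity_back}.

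For convergence, I would again invoke Opial/Fej\'er monotonicity: the displayed inequality shows $\|x_k-x_*\|$ is nonincreasing for every $x_*\in S$, so $(x_k)$ is bounded and Fej\'er-monotone with respect to $S$; combining \eqref{EQ:complexity_back} with the finiteness of $\max_{j\le k}\hat{\bm{\alpha}}^T\nu_j$ — which is bounded uniformly in $k$ because the $\nu_j$ lie in $\mathcal V_{\bm{\alpha}_j}(x_{j-1},x_j)\subset\partial g(H_{\bm{\alpha}_j}(x_{j-1},x_j))$ with arguments in $\dom g\circ F$, and the relevant condition-number bound (the analogue of Lemma \ref{LE:bounded1}/Lemma \ref{TH:explicitBound} for the family $\bm{\alpha}\in\Gamma$ with $\bm{\alpha}\le\hat{\bm{\alpha}}$) is finite on the compact set $B(x_*,\|x_0-x_*\|)$ — forces $g\circ F(x_k)\to\min g\circ F$. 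Lower semicontinuity of $g\circ F$ then puts every cluster point of $(x_k)$ in $S$; Fej\'er monotonicity with respect to $S$ upgrades this to convergence of the whole sequence to a single point of $S$.

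\textbf{Main obstacle.} The delicate point is not the telescoping estimate, which is essentially identical to the fixed-step case, but justifying that the multipliers $\nu_j$ (hence $\hat{\bm{\alpha}}^T\nu_j$) are \emph{uniformly} bounded along the trajectory so that the right-hand side of \eqref{EQ:complexity_back} is $O(1/k)$ and, in the convergence argument, so that $g\circ F(x_k)-\min g\circ F\to0$. One must check that the qualification machinery of Subsection \ref{s:cr} — the Slater point $\bar x$, Proposition \ref{PR:qualifSub}, and the explicit bound of Lemma \ref{TH:explicitBound} — transfers verbatim to the perturbed surrogates $H_{\bm{\alpha}}$ with $\bm{\alpha}\in\Gamma$, $\bm{\alpha}\le\hat{\bm{\alpha}}$ (the construction of $w(x)$ only uses that the coefficient vector is nonnegative and vanishes exactly where $L_i=0$, so this is true but needs to be stated), and that the resulting condition number, taken as a supremum over the finite-dimensional compact parameter range $\{\bm{\alpha}\in\Gamma:\bm{\alpha}\le\hat{\bm{\alpha}}\}$ and over the compact set $B(x_*,\|x_0-x_*\|)$, remains finite. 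Everything else is a routine transcription of the proof of Theorem \ref{TH:nonincreasing}.
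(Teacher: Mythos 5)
Your proposal is correct and follows essentially the same route as the paper's proof: the descent property extracted from the exit condition of the while-loop, the chain of inequalities mirroring \eqref{EQ:bound_decreasing} with $\bm{\alpha}_k$ in place of $\bm{L}$, the telescoping sum, and Fej\'er monotonicity for convergence. If anything you are more careful than the paper, which simply asserts that $C_k=\max_{j\le k}\min\{\bm{\alpha}_j^T\nu:\nu\in\mathcal V_{\bm{\alpha}_j}(x_{j-1},x_j)\}$ is bounded and that a version of Proposition \ref{PR:qualifSub} holds for $(\mathcal{P}_{\bm{\alpha},x})$, whereas you explicitly flag that the qualification and multiplier-bound machinery must be checked to transfer uniformly over $\{\bm{\alpha}\in\Gamma:\bm{\alpha}\le\hat{\bm{\alpha}}\}$.
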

\begin{proof}
The proof is in the line of that of Theorem \ref{TH:nonincreasing}. We observe first that we have a descent method.	Let $(x_k)_{k\in\mathbb{N}}$ and $(\bm{\alpha}_k)_{k\in\mathbb{N}}$ be sequences generated by the algorithmic scheme in (\ref{Numerical_Scheme_Back}). Fix $k \geq 1$. One has $F(x_k) \leq H_{\bm{\alpha}_k}(x_{k-1},x_k)$ as the while-loop stops after finitely many steps (see Remark \ref{RE:backtracking_Lk}). Using the monotonicity properties of $g$, one deduces that $g(F(x_k)) \leq g(H_{\bm{\alpha}_k}(x_{k-1},x_k))$. Considering that $x_k$ is a minimizer of $g(H_{\bm{\alpha}_k}(x_{k-1},\cdot))$, it follows further that $g(H_{\bm{\alpha}_k}(x_{k-1},x_k)) \leq g( H_{\bm{\alpha}_k}(x_{k-1},x_{k-1}) )= g( F(x_{k-1}))$, indicating that the algorithm is a descent method.

For any $\nu_k$ in $\mathcal{V}_{\bm{\alpha}_k}(x_{k-1},x_k)$ and any $x_*$ in $S$, one has
\begin{eqnarray}
&& g\circ F(x_k) - g\circ F(x_*)\nonumber\\
 &\overset{(a)}{\leq}& g\circ H_{\bm{\alpha}_k}(x_{k-1},x_k) - g\circ F(x_*)\nonumber\\
&\overset{(b)}{\leq} &[H_{\bm{\alpha}_k}(x_{k-1},x_k) - F(x_*) ]^T \nu_k\nonumber\\
&\overset{(c)}{=}& H_{\bm{\alpha}_k}(x_{k-1},
{x_*})^T \nu_k - F(x_*)^T \nu_k - \frac{\bm{\alpha}_k^T \nu_k}{2} \|x_k - x_*\|^2\nonumber\\
&\overset{(d)}{=}& [F(x_{k-1}) + \nabla F(x_{k-1})(x_* - x_{k-1})]^T \nu_k - F(x_*)^T \nu_k + \frac{\bm{\alpha}_k^T \nu_k}{2} (\|x_{k-1} - x_*\|^2 - \|x_k - x_*\|^2)\nonumber\\
&\overset{(e)}{\leq}&  \frac{\bm{\alpha}_k^T \nu_k}{2} (\|x_{k-1} - x_*\|^2 - \|x_k - x_*\|^2),\nonumber
\end{eqnarray}
where: (a) is obtained by combining the descent property with $F(x_k) \leq H_{\bm{\alpha}_k}(x_{k-1},x_k)$, (b) follows from the convexity of $g$ and the fact that $\nu_k\in \mathcal{V}_{\bm{\alpha}_k}(x_{k-1},x_k)$, (c) is obtained by combining  $[\nabla_y H_{\bm{\alpha}_k}(x_{k-1},x_k)]^T \nu_k = 0$ with equation (\ref{EQ:strong_a}), (d) is an explicit expansion of $H_{\bm{\alpha}_k}(x_{k-1},x_k)^T\nu_k$ explicitly, and eventually, (e) stems from the property that the $i$-th coordinate of $\nu_k$ is nonnegative if $L_i>0$ (cf. Assumption \ref{AS:convex}), the construction of $\Gamma$ and the coordinatewise convexity of $F$.
We therefore obtain:
\begin{eqnarray}
g(F(x_k)) - g(F(x_*)) \leq \frac{C_k}{2k} \|x_0 - x_*\|^2,\ \ k\geq1,
\end{eqnarray}
where $\displaystyle C_k=\max_{j=1,\ldots,k} \min \{ \bm{\alpha}_j{^T} \nu: \nu\in \mathcal{V}_{\bm{\alpha}_j(x_{j-1},x_j) }\}$ is  a bounded sequence.

The convergence of the sequence $(x_k)_{k\in \NN}$ follows by similar arguments as in the proof of Theorem \ref{TH:nonincreasing}.\end{proof}

We now consider the fundamental and ubiquitous case when global Lipschitz constants do not exist but exist locally. A typical case of such a situation is given by problems involving $C^2$ mappings $F$ (local Lipschitz continuity follows indeed from a direct application of the mean value theorem to the $\nabla f_i$, $i=1,\ldots,m$). To compensate for this lack of global Lipschitz continuity, we make the following coercivity assumption:
\begin{assumption}
	\label{AS:existence_sub4}
	
The problem $g\circ F$ is coercive and, for any $x\in F^{-1}(\dom g)$, the function $g\circ H_{{\bm{\alpha_0}}}(x,\cdot)$ is coercive.
\end{assumption}

\begin{theorem}[Convergence without any global Lipschitz condition]\label{t:gen} Suppose that  Assumptions~\ref{AS:effectiveness}, \ref{AS:Qualification} and \ref{AS:existence_sub4} hold and that Assumption \ref{AS:convex} is weakened in the sense that we  assume that each $\nabla f_i$ is only locally Lipschitz continuous.
Then any sequence $(x_k)_{k\in \NN}$ generated by (\ref{Numerical_Scheme_Back}) converges to a single point in the solution set $S$.
\end{theorem}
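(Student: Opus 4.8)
The plan is to show, as a preliminary step, that the backtracking while-loop remains finite under the weakened hypotheses, and then to reduce to a finite-dimensional analysis on a compact sublevel set so that the mechanism of Theorem~\ref{TH:nonincreasingBack} can be reproduced. First I would observe that, since $g\circ F$ is coercive (Assumption~\ref{AS:existence_sub4}) and the method is a descent method, every iterate $x_k$ lies in the compact sublevel set $\mathcal{S}_0 := \{x : g(F(x)) \leq g(F(x_0))\}$, once descent is established. To establish descent I must first guarantee that step~2 terminates: let $\tilde{x} \in p_{\tilde{\bm{\alpha}}}(x_k)$ be produced in the loop. Using the coercivity of $g\circ H_{\bm{\alpha}_0}(x_k,\cdot)$ together with the fact that the surrogate only becomes ``steeper'' as the $\tilde{\alpha}_i$ increase (so $g\circ H_{\tilde{\bm{\alpha}}}(x_k,\cdot) \geq g\circ H_{\bm{\alpha}_0}(x_k,\cdot)$ whenever $\tilde{\bm{\alpha}}\geq \bm{\alpha}_0$), one sees that all candidate points $\tilde{x}$ generated during a single while-loop stay in a fixed compact set $\mathcal{K}_k$ depending only on $x_k$ and $\bm{\alpha}_0$. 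Each $\nabla f_i$ is Lipschitz on the compact set $B(x_k, \mathrm{diam}(\mathcal{K}_k))$ with some local constant $\ell_i(x_k)$; as soon as $\tilde{\alpha}_i \geq \ell_i(x_k)$ the descent lemma forces the $i$-th inequality in step~2 to hold, so the loop exits after at most $\lceil \log_\eta(\ell_i(x_k)/(\bm{\alpha}_0)_i)\rceil$ updates of each coordinate. This proves the while-loop is finite and, exactly as in the first paragraph of the proof of Theorem~\ref{TH:nonincreasingBack}, that $g(F(x_{k+1})) \leq g(F(x_k))$.

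Next I would upgrade this to a uniform bound on the step-size sequence. Since all iterates lie in the compact set $\mathcal{S}_0$, and since each while-loop starting from $x_k$ keeps its candidates in a compact set that varies continuously (hence is uniformly bounded) as $x_k$ ranges over $\mathcal{S}_0$, there is a single enlarged compact set $\mathcal{K}$ containing $\mathcal{S}_0$ and all candidate points for all $k$. On $\mathcal{K}$ each $\nabla f_i$ is Lipschitz with a finite constant $\hat{L}_i$, and therefore, as in Remark~\ref{RE:backtracking_Lk}(a), $(\bm{\alpha}_k)_i \leq \max\{\eta \hat{L}_i, (\bm{\alpha}_0)_i\} =: \hat{\alpha}_i$ for all $k$. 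In particular $\hat{\bm{\alpha}} \in \Gamma$ is well defined and all the surrogates actually used satisfy the majorization $F(x_k)\leq H_{\bm{\alpha}_k}(x_{k-1},x_k)$. From here the chain-rule and optimality-condition machinery of Subsection~\ref{s:cr} applies verbatim on $\mathcal{K}$ (Assumption~\ref{AS:Qualification} is intact), and the computation chain (a)--(e) in the proof of Theorem~\ref{TH:nonincreasingBack} goes through unchanged for every $\nu_k \in \mathcal{V}_{\bm{\alpha}_k}(x_{k-1},x_k)$, giving
\begin{equation}
g\circ F(x_k) - g\circ F(x_*) \leq \frac{\bm{\alpha}_k^T\nu_k}{2}\bigl(\|x_{k-1}-x_*\|^2 - \|x_k-x_*\|^2\bigr), \quad \forall x_*\in S.\nonumber
\end{equation}
The finiteness of the condition-number quantity $\CC$ on the compact set $\mathcal{K}$ (Lemma~\ref{LE:bounded1}, whose explicit bound in Lemma~\ref{TH:explicitBound} uses only local Lipschitz data on $\mathcal{K}$ and the Slater point $\bar{x}$) then bounds $\inf_{\nu_k} \bm{\alpha}_k^T\nu_k$ uniformly, yielding the $O(1/k)$ rate and, in particular, that every cluster point of $(x_k)$ lies in $S$ via lower semicontinuity.

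Finally, convergence of the full sequence to a single point follows from the Opial/Fej\'er-monotonicity argument already invoked in part~(ii) of Theorem~\ref{TH:nonincreasing}: the displayed inequality shows $\|x_k - x_*\|$ is nonincreasing for every $x_* \in S$ (the right-hand side is nonnegative because $\bm{\alpha}_k^T\nu_k \geq 0$ by \eqref{LE:Lnv_positive}-type monotonicity and because the left-hand side is nonnegative), and cluster points belong to $S$; Opial's lemma then forces a unique limit. I expect the main obstacle to be the \emph{uniform} control in the second paragraph: one must be careful that the compact set in which the while-loop candidates live can be chosen uniformly over $k$ (equivalently over $x_k \in \mathcal{S}_0$), because a priori each subproblem $\big(\mathcal{P}_{\bm{\alpha},x}\big)$ has its own feasible geometry and one only has \emph{local} Lipschitz constants; making this precise requires combining the coercivity of $g\circ H_{\bm{\alpha}_0}(x,\cdot)$ with a continuity/compactness argument in $x$, and then observing that increasing the $\alpha_i$ during the loop only shrinks (or at worst does not enlarge in an uncontrolled way) the relevant sublevel sets. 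Everything downstream of that uniform bound is a routine adaptation of the already-proven Theorems~\ref{TH:nonincreasing} and~\ref{TH:nonincreasingBack}.
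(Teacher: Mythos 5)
Your proposal follows essentially the same route as the paper's (terse) proof: finiteness of the while-loop via the compact sublevel set $\{y: g\circ H_{\bm{\alpha}_0}(x_k,y)\leq g\circ F(x_k)\}$, on which the locally Lipschitz gradients become globally Lipschitz, then descent plus coercivity of $g\circ F$ to confine the iterates to a compact set where the machinery of Theorems \ref{TH:nonincreasing} and \ref{TH:nonincreasingBack} applies, and finally Fej\'er monotonicity/Opial. The uniformity issue you flag (a single compact set containing all while-loop candidates over all $k$, needed to bound $\bm{\alpha}_k$ and the multipliers uniformly) is precisely the detail the paper compresses into ``usual arguments apply,'' and your sketch of how to resolve it is sound.
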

\begin{proof} The first and main point to be observed is that the while-loop is finite. Fix $k\geq 1$. Observe that since $\tilde \alpha\geq \alpha_0$ any of the sublevel sets of the subproblems  in the while loop are contained  in a common compact set $K:=\{y\in \RR^n:g\circ H_{\bm{\alpha}_0}(x_k,y)\leq g\circ F(x_k))\}$. Since  each gradient is Lipschitz continuous globally on $K$ the while loop ends in finitely many runs. Using arguments similar to those previously given allows to prove that the sequence $x_k$ is a descent sequence. As a consequence it evolves in set $\{y\in \RR^n:g(F(y))\leq g(F(x_0))\}$ which is a compact set by coercivity. Since on this set $\nabla F$ is Lipschitz continuous, usual arguments {apply}. One can thus prove that the sequence converges as previously. 
\end{proof}
\begin{remark} {\rm ``Complexity estimates" could also be derived but the constants appearing in the estimates would be unknown a priori. In a numerical perspective, they could be updated online and used to forecast the efficiency of the method step after step.}
\end{remark}

\section{Numerical experiments}\label{SE:numerical}

In this section, we present numerical experiments which illustrate the performance of the proposed algorithm on some collections of min-max problems. The proposed method is compared to both the Proximal Gauss-Newton method (PGNM) and an accelerated variant \cite[Algorithm 8]{Drusvyatskiy:20162}, which we refer to as APGNM. In this setting the convergence rate for both \alg\ and PGNM is of the order $O(1/k)$, while it is of the order of $O(1/k^2)$ for APGNM (see \cite{Drusvyatskiy:20162} and the discussion in Subsection \ref{SE:MINMAX}).

\subsection{Problem class and algorithms}

We consider the problem of minimizing the maximum of finitely many quadratic convex functions:
\begin{equation}
\begin{array}{rl}
\underset{x\in\mathbb{R}^n}{\mathrm{min}}\ &\mathrm{max}\{f_1(x),\ldots,f_m(x)\},\label{EQ:minmax_p}\\
\text{with}\ &f_i\colon x \to x^T Q_i x + b_i^T x  + c_i \quad \text{for} \ i=1,\ldots,m,
\end{array}
\end{equation}
where  $Q_1, \ldots, Q_m$ are real $n\times n$ positive semidefinite matrices, $b_1$, $\ldots, b_m$ are in $\mathbb{R}^n$, and $c_1,\ldots,c_m$ in $\mathbb{R}$. Choosing $g$ as the coordinatewise maximum and each coordinate of $F$ to be one of the $f_i$, $i=1,\ldots,m$, one sees that this problem is of the form of $\PP$. Assumptions~\ref{AS:convex} and~\ref{AS:Qualification} are satisfied and we  assume that the data of the problem (\ref{EQ:minmax_p}) are chosen so that Assumption~\ref{AS:effectiveness} holds. It is easily checked that Assumption \ref{AS:existence_sub} holds. Under these conditions, all the results established in Section \ref{SE:problem} and Section \ref{SE:complexity} hold for problem (\ref{EQ:minmax_p}).

We consider two choices for the vector $\bm L$, 
$${\bm L}_{_{\rm\small GN}}=\big(\tilde L,\ldots, \tilde L\big)\text{ and }{\bm{L}}_{_{\rm MProx}}=(L_1,\ldots,L_m)$$
where $\tilde{{L}}:= \mathrm{max}\{L_1,\ldots,L_m\}$.  {By replacing $\bm{L}$ in \alg\ with ${\bm L}_{_{\rm\small GN}}$}, we recover the PGNM method. Similarly, by 
{replacing $\bm{L}$ in \alg\ with}
${\bm{L}}_{_{\rm MProx}}$, we have the \alg\ algorithm. In both cases, the subproblem $(\mathcal{P}_x)$ writes
\begin{align}
	x_{k+1}\in \underset{y\in\mathbb{R}^n, s \in \RR}{\mathrm{argmin}}\quad&s\ \nonumber\\
	\mathrm{s.t.}\quad&s\geq f_i(x_k) + \langle \nabla f_i(x_k),y - x_k\rangle + \frac{L'_i}{2}\|y - x_k\|^2,\ i=1,\ldots,m,
\label{EQ:equiv_sub_proposed}
\end{align}
the choice of $L'_i$, $i = 1, \ldots, m$, being the only difference between the two methods. Problem (\ref{EQ:equiv_sub_proposed}) is a quadratically constrained quadratic program (QCQP) which can be solved by appropriate solvers. 

As for APGNM, the accelerated variant of PGNM, it requires to solve iteratively QCQP subproblems which are analogous to (\ref{EQ:equiv_sub_proposed}), their solutions can be computed similarly. However APGNM requires to solve two QCQP subproblems at each step (we refer the interested readers to \cite{Drusvyatskiy:20162} for more details on this algorithm).

\subsection{Performance comparison}

In this section, we provide numerical comparison of the performances of \alg, PGNM, and APGNM on problem (\ref{EQ:minmax_p}), with synthetic, randomly generated data. 
First, let us explain the data generation process. The matrix $Q_m$ is set to zero so that the resulting component, $f_m$ is actually an affine function (and $L_m$ is set to $0$). The rest of problem data is generated as follows. For $i = 1,\ldots,m-1$ we generate positive semidefinite matrices $Q_i = Y_i D_i Y_i$, where $Y_i$ are random Householder orthogonal matrices 
$$Y_i = I_n - 2\frac{\omega_i \omega_i^T}{\|\omega_i\|^2},$$
where $I_n$ is the $n\times n$ identity matrix and the coordinates of $\omega_i\in\mathbb{R}^n$ are chosen as
{independent} realizations of a unit Gaussian. For $i = 1,\ldots,m-1$, $D_i$ is chosen as an $n\times n$ diagonal matrix whose diagonal elements are randomly shuffled from the set
$$\{i\times10^{\frac{j}{n-1}}, j = 1,\ldots,n\}.$$
For $i=1,\ldots,m$, the coordinates of $b_i$ are chosen as independent realizations of a Gaussian $N(0,(1/3)^2)$. Finally, we choose 
$$c_i = 10^{2i/m}.$$
Note that for each $i=1,\ldots,m$, the Lipschitz constant of $f_i$ is twice the maximum eigenvalue of $Q_i$, i.e.,
$$L_i = 2 \times {\mathrm{max}}\{\text{eigenvalue}(Q_i)\}.$$

To iteratively solve the surrogate QCQP subproblems, we used {\it MOSEK} solver \cite{Mosek:2016} interfaced with \textit{YALMIP} toolbox \cite{Lofberg:2004} in Matlab. We fix the number of variables to $n =100$ and choose the origin as the initial point. For each $m\in\{5,10,15,20,25,30\}$, we repeat the random data generation process 20 times and run the three algorithms to solve each corresponding problem. 


As a measure of performance, in order to compare efficiency between different random runs of the data generation process, we use the following normalized suboptimality gap $$\left(\frac{g\circ F(x_{k}) -  g\circ F(x_*)} { g\circ F(x_{0}) - g\circ F(x_*)}\right)_{k\in \mathbb{N}}.$$
Statistics for the three algorithms are presented in Table \ref{TAB:comparison}. It is clear from these figures, that \alg\ is dramatically faster than both PGNM and APGNM. Furthermore, \alg\ seems to suffer less from increasing values of $m$. Finally 
{APGNM}
 is less consistent in terms of performances.
\begin{table}[htp]
	\caption{Comparisons of \alg, PGNM, and APGNM in terms of normalized suboptimality gap in percentage at 10th and 20th iterations. The mean and standard deviations represent the central tendency and dispersion for 20 random runs of the data generation process.}
	\begin{center}%
		\begin{tabular}{c|cc||cc||cc}
			\hline
			\multicolumn{7}{c}{}\\
			\multicolumn{7}{c}{$100\times${\large $\frac{g\circ F(x_{k}) -  g\circ F(x_*)}{ g\circ F(x_{0}) - g\circ F(x_*)}$}}\\
			\multicolumn{7}{c}{}\\
			\cline{1-7} 
			& \multicolumn{2}{c||}{ } & \multicolumn{2}{c||}{} & \multicolumn{2}{c}{}  \\
			& \multicolumn{2}{c||}{\alg\ } & \multicolumn{2}{c||}{PGNM } & \multicolumn{2}{c}{APGNM}  \\
			& \multicolumn{2}{c||}{ } & \multicolumn{2}{c||}{} & \multicolumn{2}{c}{}  \\
			\cline{2-7}
 			& \multicolumn{2}{c||}{$k=10$} &  \multicolumn{2}{c||}{$k=10$}  &  \multicolumn{2}{c}{$k=10$}  \\\hline
			$m$ &mean  & std  & mean & std & mean & std \\\hline
			5 & 0.48 & 1.12$\times$10$^{-1}$	&94.44&5.01$\times$10$^{-1}$&88.15&1.07 								\\
			10 & 0.46& 9.81$\times$10$^{-2}$	&95.20&3.34$\times$10$^{-1}$&89.78&7.14$\times$10$^{-1}$\\
			15  & 0.47 & 8.49$\times$10$^{-2}$&95.62&2.84$\times$10$^{-1}$&90.66&6.05$\times$10$^{-1}$\\
			20 &  0.47& 8.73$\times$10$^{-2}$ &95.72&3.71$\times$10$^{-1}$&90.88&7.92$\times$10$^{-1}$\\
			25  & 0.44&  1.07$\times$10$^{-1}$&95.79&4.04$\times$10$^{-1}$&91.03&8.64$\times$10$^{-1}$\\
			30 & 0.46  & 8.01$\times$10$^{-2}$&95.82&4.33$\times$10$^{-1}$&91.09&9.26$\times$10$^{-1}$\\
\hline
 			&  \multicolumn{2}{c||}{$k=20$}  &  \multicolumn{2}{c||}{$k=20$} &  \multicolumn{2}{c}{$k=20$} \\\hline
			$m$ & mean  & std  & mean & std & mean &std\\\hline
			5 &  2.24$\times$10$^{-2}$ & 5.70$\times$10$^{-3}$&88.88&1.00  								&62.34 & 3.40 \\
			10&  2.24$\times$10$^{-2}$ & 5.86$\times$10$^{-3}$&90.40&6.68$\times$10$^{-1}$&67.53 & 2.27 \\
			15&  2.15$\times$10$^{-2}$ & 5.43$\times$10$^{-3}$&91.24&5.67$\times$10$^{-1}$&70.35 & 1.92 \\
			20&  2.13$\times$10$^{-2}$ & 4.67$\times$10$^{-3}$&91.44&7.42$\times$10$^{-1}$&71.03 & 2.51 \\
			25&  2.08$\times$10$^{-2}$ & 6.45$\times$10$^{-3}$&91.57&8.10$\times$10$^{-1}$&71.49 & 2.74 \\
			30&  2.07$\times$10$^{-2}$ & 5.46$\times$10$^{-3}$&91.64&8.67$\times$10$^{-1}$&71.71 & 2.94 \\
\hline
\end{tabular}
\end{center}
\label{TAB:comparison}
\end{table}

\begin{figure}[!htbp]
\begin{center}
\includegraphics[trim=0.0cm 0.15cm 0.4cm 0.1cm, clip=true, width=1.0\textwidth, angle=0]{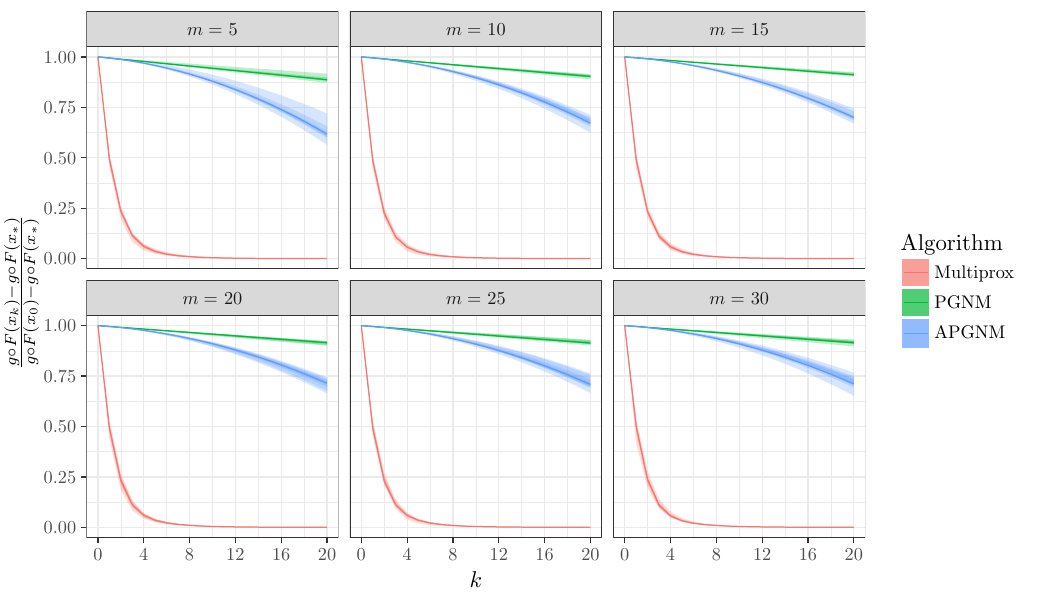}
\caption{Comparisons of \alg, PGNM, and APGNM in terms of normalized suboptimality gap. The central line is the median over the 20 random runs of the data generation process and the ribbons contain 25\%, 50\%, 75\% and 100\% of the simulations respectively.}
\label{Fig:com_gauss_proposed}
\end{center}
\end{figure}
\begin{figure}[!htbp]
\begin{center}
\includegraphics[trim=0.0cm 0.15cm 0.4cm 0.1cm, clip=true, width=1.0\textwidth, angle=0]{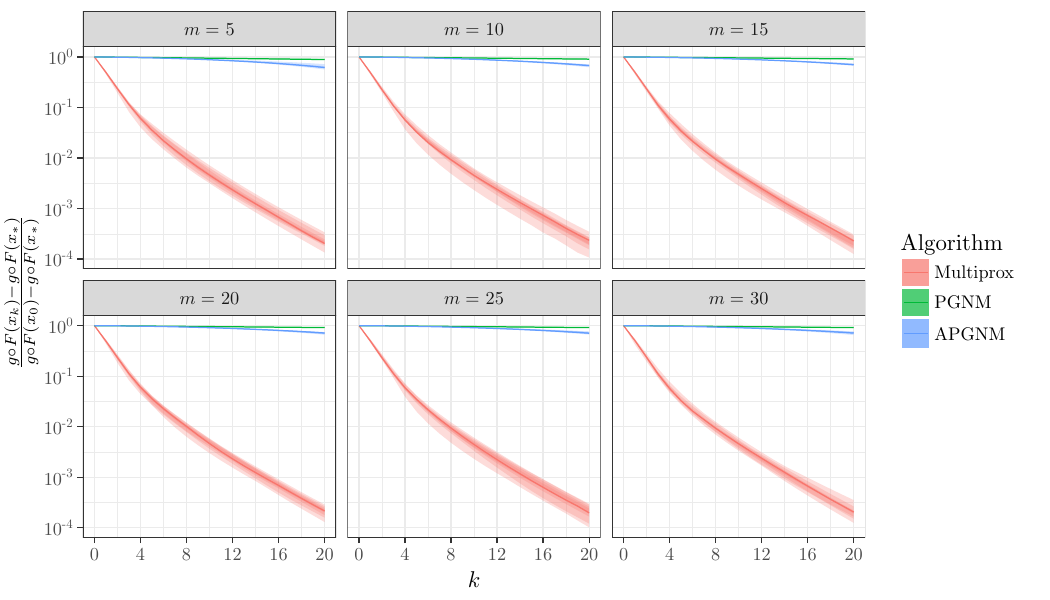}
\caption{Same as Figure \ref{Fig:com_gauss_proposed} with the $y$ axis on log scale.}
\label{Fig:com_gauss_proposed1}
\end{center}
\end{figure}

A graphical view of the same results is presented in Figure \ref{Fig:com_gauss_proposed} and a log-scale view is given in Figure \ref{Fig:com_gauss_proposed1}.
One can see from Figures \ref{Fig:com_gauss_proposed} and  \ref{Fig:com_gauss_proposed1} that the sequences for \alg\ and PGNM are 
nonincreasing as predicted by our theory. Note that the sequence generated by APGNM is not necessarily nonincreasing, although all the 
sequences represented in Figure \ref{Fig:com_gauss_proposed} are strictly decreasing. It is clear that the decreasing slopes for PGNM and 
APGNM are much smaller than that of \alg, coinciding with the  data in Table \ref{TAB:comparison}.  This situation is actually not surprising 
since the data of the problem were chosen in a way ensuring that the $L_i$'s ($i=1,\ldots,m$) can take very different values as in many ill posed problems. The strength of \alg\ is that $\bm{L}$ can be chosen 
appropriately to adapt to this disparity. On the other hand the use of a single parameter (as in PGNM or APGNM) yields smaller steps and thus slower convergence.

{
\section*{Acknowledgements}
We thank Marc Teboulle for his suggestions and the anonymous referees for their very useful comments. And, we thank Radu Ioan Bot for kindly pointing out reference \cite{BGW} after this work has been accepted.
}

\appendix
\section{Proof of Proposition \ref{PR:chainRule}}
\label{SE:appendixProofChainRule}

Let us recall a qualification condition from \cite{Rockafellar:1998}. Given any $x\in F^{-1}(\dom g)$, let
\begin{equation}J(x,\cdot):\left\{
\begin{array}{lll}
 \mathbb{R}^n & \rightarrow & \mathbb{R}^{m}\\
  \omega &\mapsto &  F({x}) + \nabla F({x}) \omega,
\label{EQ:linearized_mapping}
\end{array}\right.
\end{equation}
be the linearized mapping of $F$ at $x$. 
Proposition~\ref{PR:chainRule}  follows immediately from the classical chain rule given in  \cite[Theorem 10.6]{Rockafellar:1998} and the following proposition.

\begin{proposition}[Two equivalent qualification conditions]\label{TH:QC} Under Assumptions \ref{AS:convex} on $F$ and $g$, Assumption~\ref{AS:Qualification} holds if and only if 
\begin{center}
{\bf (QC) }  $\dom g$ cannot be separated from $J(x,\mathbb{R}^n)$ for any $x\in F^{-1}(\dom g)$.
\end{center}
\end{proposition}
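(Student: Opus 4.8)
The plan is to unwind both conditions into statements about the relative position of the convex set $\dom g$ and the affine subspace $J(x,\R^n) = F(x) + \nabla F(x)\R^n$, and to use the monotonicity of $g$ as the crucial geometric lever. Recall that ``cannot be separated'' means there is no hyperplane $\{z : \langle z,\mu\rangle = c\}$ with $\mu \neq 0$ such that $\dom g$ lies in one closed halfspace and $J(x,\R^n)$ in the other; by a standard separation theorem this failure is equivalent to $\mathrm{ri}(\dom g) \cap J(x,\R^n) \neq \emptyset$ (since an affine set equals its own relative interior). So (QC) amounts to: for every $x \in F^{-1}(\dom g)$, the linearized image meets the relative interior of $\dom g$. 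The content of the proposition is therefore that this relative-interior condition is equivalent to Assumption~\ref{AS:Qualification}, namely $F^{-1}(\mathrm{int}\,\dom g) \neq \emptyset$.

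First I would prove the easy direction, (QC) $\Rightarrow$ Assumption~\ref{AS:Qualification} — actually I would rather derive both directions from one structural observation. Pick any $\bar x$ satisfying the assumption, i.e. $F(\bar x) \in \mathrm{int}\,\dom g$. For an arbitrary $x \in F^{-1}(\dom g)$, I want a point of $J(x,\R^n)$ inside $\mathrm{ri}(\dom g)$; the natural candidate is $J(x,\bar x - x) = F(x) + \nabla F(x)(\bar x - x)$. By convexity of each $f_i$ one has $f_i(x) + \nabla f_i(x)^T(\bar x - x) \le f_i(\bar x)$, so coordinatewise $J(x,\bar x - x) \le F(\bar x)$. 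Now invoke monotonicity of $g$ (Assumption~\ref{AS:convex}): for the coordinates where $f_i$ is affine, equality $f_i(x)+\nabla f_i(x)^T(\bar x-x) = f_i(\bar x)$ holds, and for the coordinates where $g$ is nondecreasing, lowering the argument keeps us in the domain — more precisely, since $F(\bar x) \in \mathrm{int}\,\dom g$ and $\dom g \supseteq F(\bar x) - \R_+^m$ locally in the monotone coordinates while being locally full in the affine coordinates, the point $J(x,\bar x-x)$ lies in $\mathrm{int}\,\dom g$ as well (this needs a short lemma: if $z \in \mathrm{int}\,\dom g$ and $z' \le z$ with $z'_i = z_i$ whenever $g$ is not nondecreasing in coordinate $i$, then $z' \in \mathrm{int}\,\dom g$; this follows from the box neighbourhood of $z$ together with monotonicity). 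Hence $J(x,\R^n) \cap \mathrm{int}\,\dom g \neq \emptyset$, so a fortiori $J(x,\R^n) \cap \mathrm{ri}(\dom g) \neq \emptyset$, giving (QC).

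For the converse, suppose (QC) holds. Apply it at the point $x_0 \in F^{-1}(\dom g)$ whose existence is guaranteed by Assumption~\ref{AS:effectiveness} (any feasible point works). Then there is $\omega \in \R^n$ with $F(x_0) + \nabla F(x_0)\omega \in \mathrm{ri}(\dom g)$. I must upgrade ``relative interior'' to ``interior'' and ``a point of the linearized image'' to ``a point of the image $F(\cdot)$''. The first step uses monotonicity again: actually I expect the cleanest route is to show directly that $\mathrm{ri}(\dom g) = \mathrm{int}\,\dom g$ is forced here, or rather that any point of $J(x_0,\R^n) \cap \mathrm{ri}(\dom g)$ can be pushed (by subtracting a positive vector in the monotone coordinates, which stays inside $J(x_0,\R^n)$ only if $\nabla F(x_0)$ is surjective — so this is where care is needed). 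The second step is the genuine obstacle: from $F(x_0)+\nabla F(x_0)\omega \in \mathrm{int}\,\dom g$ I want an actual $\bar x$ with $F(\bar x) \in \mathrm{int}\,\dom g$; the candidate $\bar x = x_0 + \omega$ works because $F(x_0) + \nabla F(x_0)\omega \ge F(x_0 + \omega)$ is false in general — convexity gives the inequality the wrong way. Instead I would argue: $F(x_0 + t\omega) \to F(x_0)$ as $t \to 0^+$ and, more usefully, $\tfrac{1}{t}\big(F(x_0+t\omega) - F(x_0)\big) \to \nabla F(x_0)\omega$, so $F(x_0 + t\omega) = F(x_0) + t\,\nabla F(x_0)\omega + o(t)$; since the segment $[F(x_0),\,F(x_0)+\nabla F(x_0)\omega)$ lies in $\mathrm{int}\,\dom g$ whenever its endpoint does, and $F(x_0+t\omega)$ stays within $o(t)$ of a point that is at distance $\Theta(t)$ inside, we get $F(x_0 + t\omega) \in \mathrm{int}\,\dom g$ for small $t>0$. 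Taking $\bar x = x_0 + t\omega$ establishes Assumption~\ref{AS:Qualification}.

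The main obstacle, as indicated, is the passage between the \emph{linearized} image $J(x,\R^n)$ and the genuine image of $F$, in both directions; monotonicity of $g$ (which lets one replace a vector by any smaller one in the nondecreasing coordinates without leaving the domain or its interior) is exactly what makes this passage go through, and getting the interior-vs-relative-interior bookkeeping right in the presence of affine components of $F$ is the fiddly part. I would isolate the monotonicity fact as a standalone lemma and also record the elementary fact that for an affine set $A$, $A$ cannot be separated from a convex set $C$ iff $A \cap \mathrm{ri}\,C \neq \emptyset$, so that the rest of the argument is purely about $F$ and its linearization.
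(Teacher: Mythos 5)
Your plan is sound, and in the harder direction ((QC) $\Rightarrow$ Assumption~\ref{AS:Qualification}) it is essentially the paper's own argument: the paper likewise extracts $\tilde\omega$ with $J(\tilde x,\tilde\omega)\in\mathrm{int}\,\dom g$, writes $\|F(\tilde x+\lambda\tilde\omega)-J(\tilde x,\lambda\tilde\omega)\|=o(\lambda)$, and uses concavity of the distance-to-boundary function to show that $J(\tilde x,\lambda\tilde\omega)$ sits at depth at least $\lambda\,\mathrm{dist}[J(\tilde x,\tilde\omega),\mathrm{bd}\,\dom g]$ inside the domain, which is exactly your ``$o(t)$ versus $\Theta(t)$'' step. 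In the other direction you genuinely differ: you directly exhibit the point $J(x,\bar x-x)\le F(\bar x)$ and push it into $\mathrm{int}\,\dom g$ via your monotonicity lemma (which is correct: a ball around $z$ contained in $\dom g$ transfers to one around $z'$ because only monotone coordinates are decreased), whereas the paper argues by contraposition, taking a separating functional $(a,b)$ and combining the same componentwise inequality $F(\tilde x+\omega)\ge J(\tilde x,\omega)$ with monotonicity to get $a^TF(\tilde x+\omega)+b\ge 0$ for all $\omega$, hence $F^{-1}(\mathrm{int}\,\dom g)=\emptyset$. The key ingredient (convexity inequality plus monotonicity of $g$) is the same; your direct version is arguably cleaner and produces an explicit qualified point for each subproblem.

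The one real loose end is your reduction of ``cannot be separated'' to $J(x,\R^n)\cap\mathrm{ri}\,\dom g\neq\emptyset$. As stated this is not an equivalence: the standard criterion characterizes \emph{proper} separation, and two convex sets with intersecting relative interiors can still be improperly separated if both lie in a common hyperplane. Moreover, even granting it, the $o(t)$ argument needs a point of $J$ in the \emph{full} interior, and you flag but do not close the ri-versus-int step. Both issues are settled at once by the observation the paper makes at the start of its proof: by monotonicity, $F(\tilde x)-d\in\dom g$ for every $d$ with positive entries, so $\mathrm{int}\,\dom g\neq\emptyset$; hence $\mathrm{ri}\,\dom g=\mathrm{int}\,\dom g$, $\dom g$ is contained in no hyperplane, and one can apply Hahn--Banach directly to the open convex set $\mathrm{int}\,\dom g$ and the affine set $J(\tilde x,\R^n)$: if they were disjoint they could be separated, contradicting (QC). With that insertion your proof is complete. (A caveat you share with the paper: Assumption~\ref{AS:convex} only gives monotonicity of $g$ in the coordinates with $L_i>0$, so the nonemptiness of $\mathrm{int}\,\dom g$ tacitly also requires $\dom g$ not to be flat in the affine coordinates --- precisely the bookkeeping you identified as the fiddly part.)
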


\begin{proof} We first suppose that {\bf (QC)} is true. We begin with a remark showing that this implies that $\dom g$ is not empty. Let $A$ and $B$ be two subsets of $\RR^m$. The logical negation of the sentence ``$A$ and $B$ can be separated'' can be written as follows: for all $a$ in $\RR^m$ and for all $b \in \RR$, there exists $y \in A$ such that
\begin{align*}
	{a^Ty} + b > 0,
\end{align*}
or, there exists $z \in B$ such that
\begin{align*}
	{a^T z}
	+ b< 0.
\end{align*}
In particular if $A$ and $B$ cannot be separated, then either $A$ or $B$ is not empty. Note that if $\dom g$ is empty, then so is the set $\{J(x, \RR^n),\, x \in F^{-1}(\dom g)\}$. Hence {\bf (QC)} actually implies that $\dom g$ is not empty.				
				Pick a point $\tilde{x}\in F^{-1}(\dom g)$. If $F(\tilde{x})\in \text{int dom}(g)$, there is nothing to prove, so we may suppose that $F(\tilde{x})\in \text{bd dom}\,g$.
				{If we had $[\text{int dom}(g)] \cap J(\tilde{x},\mathbb{R}^n) = \emptyset$, then, $\dom g$ and $J(\tilde{x},\mathbb{R}^n)$ could be separated by Hahn-Banach theorem contradicting {\bf (QC)}. }
				Hence, there exists $\tilde{\omega}\in\mathbb{R}^n$ such that $J(\tilde{x},\tilde{\omega})\in \text{int dom}(g)$. {Note that, since $F(\tilde{x})\in \mathrm{dom}(g)$ and $g$ is nondecreasing with respect to each argument, it follows $F(\tilde{x}) - d\in \mathrm{dom}(g)$ for any $d\in(\mathbb{R}_+^*)^m$, indicating that $\mathrm{int}(\mathrm{dom}(g))\neq \emptyset$}. Since $\dom g$ is convex, a classical result yields
\begin{eqnarray}
J(\tilde{x},\lambda \tilde{\omega}) \in \text{int dom}(g),\ \forall\,\lambda\in(0,1].
\label{EQ:QC:int}
\end{eqnarray}
On the other hand $F$ is differentiable thus
\begin{eqnarray}
\|F(\tilde{x} + \lambda \tilde{\omega}) - J(\tilde{x},\lambda \tilde{\omega})\| =  o(\lambda),
\label{EQ:Taylor}
\end{eqnarray}
where $o(\lambda)/\lambda$ tends to zero as $\lambda$ goes to zero.

After these basic observations, let us  recall an important property of the signed distance (see \cite[p. 154]{Hiriart-Urruty:1993}).
Let $D\subset \mathbb{R}^m$ be a nonempty closed convex set. Then, the function
\begin{eqnarray}\label{LE:Concavity} D\rightarrow \mathbb{R}_+,\ z\mapsto \mathrm{dist}(z,\mathrm{bd}(D)),\nonumber
\end{eqnarray}
is concave.
Using this concavity property  for $D=\dom g$ and the fact that $F(\tilde{x}) = J(\tilde{x},0)$, it holds that 
\begin{eqnarray}
&&\lambda \text{dist}[J(\tilde{x},\tilde{\omega}),\text{bd}(\dom g)] + (1-\lambda) \text{dist}[F(\tilde{x}),\text{bd}(\dom g)]\nonumber\\
&& \leq \text{dist}[J(\tilde{x},\lambda\tilde{\omega}),\text{bd}(\dom g)],\ \forall\ \lambda\in[0,1].\nonumber
\end{eqnarray}
Since $\text{dist}[F(\tilde{x}),\text{bd}(\dom g)] = 0$, it follows that
\begin{eqnarray}
\lambda \text{dist}[J(\tilde{x},\tilde{\omega}),\text{bd}(\dom g)] \leq \text{dist}[J(\tilde{x},\lambda\tilde{\omega}),\text{bd}(\dom g)],\ \lambda\in[0,1].
\label{EQ:Taylor1}
\end{eqnarray}
Note that $ \text{dist}[J(\tilde{x},\tilde{\omega}),\text{bd}(\dom g)]  > 0$ since $J(\tilde{x},\tilde{\omega})\in \text{int dom}(g)$. Hence, equation (\ref{EQ:Taylor}) indicates that there exists $\epsilon > 0$ such that for any $0 < \lambda \leq \epsilon$, we have
$$\| F(\tilde{x} + \lambda \tilde{\omega})  -  J(\tilde{x},\lambda\tilde{\omega})  \| < \lambda  \text{dist}[J(\tilde{x},\tilde{\omega}),\text{bd}(\dom g)] .$$
Substituting this inequality into equation (\ref{EQ:Taylor1}) indicates that for any $0 < \lambda \leq \epsilon$, we have
$$\|F(\tilde{x} + \lambda \tilde{\omega}) - J(\tilde{x},\lambda\tilde{\omega}) \| < \text{dist}[ J(\tilde{x},\lambda\tilde{\omega}),\text{bd}(\dom g)].$$
Using equation (\ref{EQ:QC:int}), for any  $0 <\lambda \leq \epsilon$, we have $F(\tilde{x} + \lambda \tilde{\omega})\in \text{int dom}(g)$. This shows the first implication of the equivalence. 

\bigskip

{Let us prove the reverse implication by contraposition and }assume that {\bf (QC)} does not hold, that is, there exists a point $\tilde{x}\in F^{-1}(\dom g)$  such that $\dom g$ can be separated from $J(\tilde{x},\mathbb{R}^n)$. In this case, there exists $a \neq 0 \in \mathbb{R}^{m}$ and $b\in\mathbb{R}$ such that
\begin{eqnarray}
\begin{cases}
a^T z + b \leq 0,\ \forall z\in \dom g,\\
a^T J(\tilde{x},\omega) + b \geq 0,\ \forall \omega \in \mathbb{R}^n.
\end{cases}
\label{EQ:separating}
\end{eqnarray}
Since $J(\tilde{x},0) = F(\tilde{x}) \in \dom g$, it follows 
\begin{eqnarray}
a^TJ(\tilde{x},0) + b = 0.
\label{EQ:boundary_point}
\end{eqnarray}
 By the coordinatewise convexity of $F$,  for every $i\in\{1,\ldots,m\}$ one has
$$
f_i(y) \begin{cases}
\geq f_i(x) +
{\nabla f_i(x)^T (y-x)},\ L_i >0,\\
= f_i(x) + 
{\nabla f_i(x)^T(y-x)},\ L_i =0,
\end{cases}\ \ \forall (x,y)\in\mathbb{R}^n\times\mathbb{R}^n.
$$
We thus have the componentwise inequality
$$J(\tilde{x},0) - [F(\tilde{x}+\omega) - J(\tilde{x},\omega) ] \leq F(\tilde x).$$
The monotonicity properties of $g$ implies thus that 
$$J(\tilde{x},0) - [F(\tilde{x}+\omega) - J(\tilde{x},\omega) ] \in \dom g,\ \forall \omega\in\mathbb{R}^n.$$
 As a result, combining equation (\ref{EQ:separating}) with equation (\ref{EQ:boundary_point}), one has
$$a^{T} \big\{ J(\tilde{x},0) - [F(\tilde{x}+\omega) - J(\tilde{x},\omega) ]\} + b \leq  a^T J(\tilde{x},0) + b,\ \forall \omega\in\mathbb{R}^n,$$
which reduces to $a^T [F(\tilde{x}+\omega) - J(\tilde{x},\omega) ] \geq 0,$ $\forall \omega\in\mathbb{R}^n$. Hence, for any $\omega\in\mathbb{R}^n$ one has  
\begin{eqnarray}
a^T F(\tilde{x}+\omega ) + b &=& a^T J(\tilde{x},\omega) + b + a^T(F(\tilde{x}+\omega) - J(\tilde{x},\omega) )\nonumber\\
&\geq& a^T J(\tilde{x},\omega) + b\;\geq\; 0,\nonumber
\end{eqnarray}
where for the last inequality, equation (\ref{EQ:separating}) is used. {{This inequality combined with the fact $a^T z + b < 0,\, \forall z\in \mathrm{int\ dom}\ g$ obtained according to the first item of equation (\ref{EQ:separating}),} shows that $F(\tilde{x}+\omega)\not\in \text{int\,dom}\ g$, for all $\omega\in\mathbb{R}^n$, and thus $F^{-1}(\text{int\,dom}\,g)= \emptyset$, that is Assumption \ref{AS:Qualification} does not hold.}
 This provides the reverse implication and the proof is complete. 
\end{proof}

\section{Proof of Lemma \ref{TH:explicitBound}}
\label{SE:appendixExplicitBound}
In this section, we present an explicit estimate of the condition number appearing in our complexity result. Let us first introduce a notation. For any $D\subset\mathbb{R}^m$, nonempty closed set, we define a signed distance function as
\begin{eqnarray}
D\rightarrow \mathbb{R},\ z\mapsto 
\mathrm{sdist}=
\begin{cases}
\mathrm{dist}(z,\mathrm{bd}(D)),\ \text{if}\ z\in \mathrm{int}(D),\\
-\mathrm{dist}(z,\mathrm{bd}(D)),\ \text{otherwise}.
\end{cases}
\label{EQ:signed_distance}
\end{eqnarray}
It is worth recalling that the signed distance function is concave (see \cite[p. 154]{Hiriart-Urruty:1993}).
We begin with a lemma which describes a monotonicity property of the signed distance function.
\begin{lemma}\label{LE:g_bd}
Given any $z\in \dom g $ and any $d=(d_1,\ldots,d_m)\in \mathbb{R}_+^m$ with $d_i =0$ if $L_i = 0$, {if $\mathrm{bd\ dom}g \neq \emptyset$,} one has 
\begin{eqnarray}
\mathrm{sdist}(z,\mathrm{bd}\ \dom g ) \geq \mathrm{sdist}(z+ d, \mathrm{bd}\ \dom g ).
\label{EQ:ineq_sdist}
\end{eqnarray}
\end{lemma}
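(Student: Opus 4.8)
The plan is to exploit the monotonicity of $g$ in each argument indexed by $i$ with $L_i>0$, together with the concavity of the signed distance function. First I would record the key structural fact coming from Assumption~\ref{AS:convex}: ignoring the affine components of $F$, for any $z\in\dom g$ one has $z-\RR_+^m\subset \dom g$, where the relevant orthant directions are exactly those coordinates $i$ with $L_i>0$ (this is the content already observed in Remark~1 and in the footnote of Assumption~\ref{AS:convex}). Equivalently, writing $d=(d_1,\dots,d_m)$ with $d_i\ge 0$ and $d_i=0$ whenever $L_i=0$, the point $z+d$ ``dominates'' $z$ along exactly the directions in which $\dom g$ is unbounded-below, so $z+d\in\dom g$ implies $z\in\dom g$, and more precisely the translate $\dom g - d$ of the domain satisfies $\dom g\subseteq \dom g - d$ in those coordinates; I will phrase this as $\dom g + \RR_+^m\cap\{d_i=0 \text{ if } L_i=0\}^{\text{complement directions}}\subseteq$ — actually more cleanly: for the closed convex set $D:=\dom g$ one has $D - d\supseteq D$.

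Next I would translate the inclusion $D\subseteq D-d$ into a pointwise comparison of signed distances. The signed distance $\mathrm{sdist}(\cdot,\mathrm{bd}\,D)$ is monotone with respect to set inclusion of closed convex sets in the following sense: if $D\subseteq D'$ then $\mathrm{sdist}(\zeta,\mathrm{bd}\,D)\le \mathrm{sdist}(\zeta,\mathrm{bd}\,D')$ for every $\zeta$. Applying this with $D'=D-d$ and $\zeta=z+d$ gives
$$\mathrm{sdist}(z+d,\mathrm{bd}\,D)\le \mathrm{sdist}(z+d,\mathrm{bd}(D-d))=\mathrm{sdist}(z,\mathrm{bd}\,D),$$
the last equality being the obvious translation-equivariance of the signed distance. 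This is exactly \eqref{EQ:ineq_sdist}. The hypothesis $\mathrm{bd}\,\dom g\neq\emptyset$ is needed so that all the signed distances are finite and the statement is meaningful; if $\dom g=\RR^m$ there is nothing to prove (or the statement is vacuous/interpreted as $+\infty\ge+\infty$).

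The main obstacle — and the only point requiring genuine care — is the first step: justifying rigorously that $\dom g\subseteq \dom g - d$ when $d$ is supported on the coordinates $i$ with $L_i>0$. The subtlety is that Assumption~\ref{AS:convex} only asserts monotonicity of $g$ in argument $i$ when $f_i$ is non-affine, i.e. when $L_i>0$; for the affine components there is no monotonicity and hence no domain-inclusion in those directions, which is precisely why the hypothesis $d_i=0$ if $L_i=0$ is imposed. I would argue: fix $z\in\dom g$ and $w\in D-d$, i.e. $w+d\in D$; I want $w\in D$. Write $w=(w+d)-d$; since $g$ is nondecreasing in each coordinate $i$ with $d_i>0$ (which forces $L_i>0$), decreasing that coordinate keeps the value of $g$ finite, so $(w+d)-d\in\dom g$. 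Iterating over the finitely many coordinates with $d_i>0$ (one coordinate at a time, using nondecreasingness in that single argument with the others held fixed, exactly as in the footnote to Assumption~\ref{AS:convex}) yields $w\in\dom g$. Hence $D-d\subseteq D$, equivalently $D\subseteq D+d$; I then use $D'=D+d$ in the monotonicity-of-signed-distance step, $\mathrm{sdist}(z,\mathrm{bd}\,D)\le\mathrm{sdist}(z,\mathrm{bd}(D+d))=\mathrm{sdist}(z-d+d,\dots)$ — I will just be careful to get the translation in the correct direction, namely $\mathrm{sdist}(z+d,\mathrm{bd}(D+d))=\mathrm{sdist}(z,\mathrm{bd}\,D)$, so that $\mathrm{sdist}(z+d,\mathrm{bd}\,D)\le\mathrm{sdist}(z+d,\mathrm{bd}(D+d))=\mathrm{sdist}(z,\mathrm{bd}\,D)$. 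The inclusion-monotonicity of signed distance itself is elementary: for $\zeta\in\mathrm{int}\,D\subseteq\mathrm{int}\,D'$ the distance to the boundary can only grow under enlargement, and for $\zeta\notin\mathrm{int}\,D$ the (possibly negative) signed distance also only grows; this can be stated as a one-line observation or, if desired, derived from the concavity / $1$-Lipschitz representation of the signed distance. I do not expect any serious technical difficulty beyond bookkeeping the direction of the translation and the handling of the affine coordinates.
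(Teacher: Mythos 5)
Your argument is correct, and it rests on exactly the same underlying fact as the paper's proof -- namely that the hypothesis $d_i=0$ whenever $L_i=0$ combined with the coordinatewise monotonicity of $g$ forces $\dom g$ to be stable under subtraction of $d$ -- but the two proofs package this fact differently. The paper argues directly and locally: it takes the nearest boundary point $\bar z$ to $z$, observes that $\bar z + d\notin\mathrm{int}\,\dom g$ (again by monotonicity), and bounds $\mathrm{sdist}(z+d,\mathrm{bd}\,\dom g)$ from above by $\|(z+d)-(\bar z+d)\|=\|z-\bar z\|=\mathrm{sdist}(z,\mathrm{bd}\,\dom g)$, after disposing of the trivial case $z+d\notin\dom g$. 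You instead globalize the monotonicity into the set inclusion $\dom g\subseteq\dom g+d$ and then invoke two general properties of the signed distance (monotonicity under inclusion of closed convex sets, which needs the small interior/boundary/exterior case analysis you sketch, and translation equivariance). Your route is slightly longer because of the auxiliary inclusion-monotonicity lemma, but it is arguably cleaner and isolates a reusable principle; the paper's route is shorter and more self-contained. One bookkeeping remark: in your opening paragraph you assert the inclusion in the wrong direction ($D-d\supseteq D$), which you then correctly reverse to $D-d\subseteq D$, i.e.\ $D\subseteq D+d$, in the final paragraph -- the written-up version should of course contain only the corrected chain $\mathrm{sdist}(z+d,\mathrm{bd}\,D)\le\mathrm{sdist}(z+d,\mathrm{bd}(D+d))=\mathrm{sdist}(z,\mathrm{bd}\,D)$.
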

\begin{proof}
Fix an arbitrary $z\in \dom g $ and an 
{arbitrary} $d=(d_1,\ldots,d_m)\in\mathbb{R}_+^m$ such that $d_i = 0$ whenever $L_i = 0$ in the sequel of the proof. If $z+d \not\in \mathrm{dom}\ g$, equation (\ref{EQ:ineq_sdist}) holds true by the definition in equation (\ref{EQ:signed_distance}). 

From now on, we suppose $z+d \in \mathrm{dom}\ g$.
Let $\bar{z} \in \mathrm{bd}\ \dom g $ be a point such that
$$\bar{z} \in \underset{\hat{z}\in \mathrm{bd}\ \dom g }{\mathrm{argmin}}\ \ \|z - \hat{z}\|.$$
Then, one has 
\begin{eqnarray}
\|z - \bar{z}\| = \mathrm{sdist}(z,\mathrm{bd}\ \dom g ).
\label{EQ:z_barz}
\end{eqnarray}
Since $\bar{z}$ lies on the boundary of $\dom g $, it follows that $\bar{z}+d \not\in \mathrm{int}\ \dom g $ because of the monotonicity property of $g$ in Assumption \ref{AS:convex}.
{Hence, by the definition of $\mathrm{sdist}$ in equation (\ref{EQ:signed_distance}), we have}
$$\mathrm{sdist}(z+d,\mathrm{bd}\ \dom g ) =  \mathrm{dist}(z+d,\RR^m \setminus\mathrm{int}\ \dom g )  \leq \|(z + d) - (\bar{z} + d)\| = \|z - \bar{z}\|.$$
Combining this inequality with equation (\ref{EQ:z_barz}) completes the proof.
\end{proof}
The following lemma shows that it is possible to construct a convex combination between the current $x$ and the Slater point $\bar{x}$ given in Assumption \ref{AS:Qualification} which will be a Slater point for the current sub-problem with a uniform control over the ``degree'' of qualification.
\begin{lemma}
\label{LE:lower_bound_Hxw}
Let $\bar{x}$ be given as in Assumption \ref{AS:Qualification} and $x \in F^{-1}(\mathrm{dom}\ g)$ {and assume that $\mathrm{bd\ dom}g \neq \emptyset$}. Set 
$$\gamma(\bar{x},x):= \mathrm{min}\left\{1,\frac{\mathrm{sdist}[F(\bar{x}),\mathrm{bd}\ \dom g ]}{2\{\mathrm{sdist}[F(\bar{x}),\mathrm{bd}\ \dom g ]- \mathrm{sdist}[F(\bar{x})+\bm{L} \|x - \bar{x}\|^2/2,\mathrm{bd}\ \dom g ]\}}\right\}.$$
Then, 
\begin{align}
 \mathrm{sdist}[H(x,x+\gamma(\bar{x},x)(\bar{x} - x)),\mathrm{bd}\ \dom g ]&\geq\frac{\mathrm{sdist}[F(\bar{x}),\mathrm{bd}\ \dom g ]^2 }{4 \big\{\mathrm{sdist}[F(\bar{x}),\mathrm{bd}\ \dom g ] + \|\bm{L}\| \|x - \bar{x}\|^2/2\big\}}.
\label{EQ:lemma_low_bound_h}
\end{align}
\end{lemma}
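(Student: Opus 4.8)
The goal is to show that the convex combination point $w(x) := x + \gamma(\bar x, x)(\bar x - x)$, with $\gamma = \gamma(\bar x, x)$ the explicitly chosen scalar, is a Slater point for the current subproblem, with the quantitative lower bound \eqref{EQ:lemma_low_bound_h} on its signed distance to $\mathrm{bd}\ \dom g$. The core idea is to split $H(x, w(x)) - F(\bar x)$ into two pieces: a ``linearization'' piece that interpolates linearly between $F(x)$ and $F(\bar x)$ (up to convexity slack), for which concavity of the signed distance gives a good lower bound, and a ``quadratic'' piece $\tfrac{\bm L}{2}\|w(x) - x\|^2$, which is a small nonnegative perturbation we can absorb using the monotonicity Lemma \ref{LE:g_bd}.

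\textbf{Step 1: control the linearized term.} Write $J(x, w(x) - x) = F(x) + \nabla F(x)(w(x) - x) = F(x) + \gamma \nabla F(x)(\bar x - x)$. By coordinatewise convexity of $F$ (the gradient inequality for each $f_i$, which holds with equality when $L_i = 0$), we have the componentwise bound $\nabla F(x)(\bar x - x) \leq F(\bar x) - F(x)$, with equality in the affine coordinates. Hence, coordinatewise, $J(x, w(x) - x) \leq (1-\gamma) F(x) + \gamma F(\bar x)$, and the difference is a vector $d \in \RR_+^m$ vanishing in coordinates where $L_i = 0$. By monotonicity of $\mathrm{sdist}$ (Lemma \ref{LE:g_bd}) applied to $z = J(x,w(x)-x) - d$ and perturbation $d$ — more precisely, since $J(x,w(x)-x) = [(1-\gamma)F(x) + \gamma F(\bar x)] - d$ with $d\ge 0$ vanishing where $L_i=0$ — we get
$$\mathrm{sdist}[J(x, w(x) - x), \mathrm{bd}\ \dom g] \geq \mathrm{sdist}[(1-\gamma) F(x) + \gamma F(\bar x), \mathrm{bd}\ \dom g].$$
Then concavity of $\mathrm{sdist}$ and the fact that $\mathrm{sdist}[F(x), \mathrm{bd}\ \dom g] \geq 0$ (since $F(x) \in \dom g$) yield
$$\mathrm{sdist}[J(x, w(x) - x), \mathrm{bd}\ \dom g] \geq (1-\gamma)\cdot 0 + \gamma\, \mathrm{sdist}[F(\bar x), \mathrm{bd}\ \dom g] = \gamma\, s,$$
writing $s := \mathrm{sdist}[F(\bar x), \mathrm{bd}\ \dom g] > 0$ for brevity.

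\textbf{Step 2: absorb the quadratic term.} Now $H(x, w(x)) = J(x, w(x)-x) + \tfrac{\bm L}{2}\|w(x)-x\|^2$ and $\|w(x) - x\|^2 = \gamma^2 \|\bar x - x\|^2 \leq \gamma \|\bar x - x\|^2$ (as $\gamma \in (0,1]$). I would like to say ``adding a nonnegative vector increases $\mathrm{sdist}$'', but that is false in general — it is precisely the content of Lemma \ref{LE:g_bd} that it \emph{decreases} it, while here we need the opposite direction. The correct move is instead to compare $H(x,w(x))$ directly with $F(\bar x)$ via a $1$-Lipschitz estimate on $\mathrm{sdist}$: since $\mathrm{sdist}(\cdot, \mathrm{bd}\ \dom g)$ is $1$-Lipschitz, one has
$$\mathrm{sdist}[H(x,w(x)), \mathrm{bd}\ \dom g] \geq \mathrm{sdist}[J(x,w(x)-x), \mathrm{bd}\ \dom g] - \big\|\tfrac{\bm L}{2}\|w(x)-x\|^2\big\| \geq \gamma s - \tfrac{\|\bm L\|}{2}\gamma\|\bar x - x\|^2.$$
So $\mathrm{sdist}[H(x,w(x)),\mathrm{bd}\ \dom g] \geq \gamma\big(s - \tfrac{\|\bm L\|}{2}\|\bar x - x\|^2\big)$. \emph{If} the parenthesis were $\geq s/2$ we would be nearly done, but it may be negative, which is exactly why the definition of $\gamma$ is a $\min$ with a second term designed to interpolate correctly using concavity along the segment rather than the crude Lipschitz bound.

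\textbf{Step 3: the careful estimate via concavity along the segment (the main obstacle).} The sharp argument — and the part I expect to be delicate — is to apply concavity of $\mathrm{sdist}$ to the segment from $F(\bar x)$ to $F(\bar x) + \bm L\|x-\bar x\|^2/2$ (or an appropriate related segment that bounds $H(x, w(x))$ from below coordinatewise after accounting for the $\gamma$-scaling of the quadratic term), exactly as in the proof of Proposition \ref{TH:QC}. Concretely: $H(x, w(x)) \geq (1-\gamma)F(x) + \gamma F(\bar x) - d + \tfrac{\bm L}{2}\gamma^2\|\bar x - x\|^2$ coordinatewise; dropping the $-d$ term by Lemma \ref{LE:g_bd} again and using $\gamma^2 \le \gamma$ with monotonicity, one reduces to lower-bounding $\mathrm{sdist}$ at a point of the form $(1-\gamma)F(x) + \gamma(F(\bar x) + \tfrac{\bm L}{2}\|\bar x - x\|^2)$. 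Applying concavity between $F(x)$ (signed distance $\geq 0$) and $F(\bar x) + \tfrac{\bm L}{2}\|\bar x-x\|^2$ (signed distance $\geq s - \|\bm L\|\|\bar x - x\|^2/2$ by Lipschitzness, possibly negative) gives a lower bound $\gamma(s - \|\bm L\|\|\bar x - x\|^2/2)$; but we can also interpolate to a point strictly inside, and the specific value $\gamma = \min\{1, \tfrac{s}{2(s - \mathrm{sdist}[F(\bar x)+\bm L\|x-\bar x\|^2/2,\ \mathrm{bd}\ \dom g])}\}$ is engineered so that this convex combination lands at signed distance at least $\tfrac{s}{2}\cdot \tfrac{s}{s + \|\bm L\|\|x - \bar x\|^2/2}$ — matching the right-hand side of \eqref{EQ:lemma_low_bound_h}. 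Unwinding the two cases of the $\min$ (whether $\gamma = 1$ or $\gamma < 1$), plugging the definition into the concavity inequality, and simplifying the resulting rational expression to the stated bound is the routine-but-fiddly computation I would carry out; the conceptual crux is recognizing that one must interpolate with $\mathrm{sdist}$ \emph{concave} rather than use the crude $1$-Lipschitz bound, and that the quadratic perturbation must be routed through Lemma \ref{LE:g_bd} to discard the convexity-slack vector $d$ and through the $\gamma^2 \le \gamma$ trick.
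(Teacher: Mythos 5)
Your overall architecture (coordinatewise convexity to majorize $H(x,w)$, Lemma \ref{LE:g_bd} to discard the nonnegative slack, concavity of $\mathrm{sdist}$ along a segment, then a case split on the $\min$ defining $\gamma$) is indeed the paper's, and Steps 1--2, including the diagnosis that the crude $1$-Lipschitz bound is insufficient, are sound. But Step 3, which is where the lemma actually lives, has a genuine gap. Writing $s=\mathrm{sdist}[F(\bar x),\mathrm{bd}\ \dom g]$ and $\beta=\mathrm{sdist}[F(\bar x)+\tfrac{\bm{L}}{2}\|x-\bar x\|^2,\mathrm{bd}\ \dom g]$, your reduction --- replace $\gamma^2$ by $\gamma$ in the quadratic term via Lemma \ref{LE:g_bd}, then apply concavity once to $(1-\gamma)F(x)+\gamma\big(F(\bar x)+\tfrac{\bm{L}}{2}\|x-\bar x\|^2\big)$ --- only yields $(1-\gamma)\,\mathrm{sdist}[F(x)]+\gamma\beta\geq\gamma\beta$, since $\mathrm{sdist}[F(x)]$ may be $0$. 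When $\beta<0$ (which happens exactly when $F(\bar x)+\tfrac{\bm{L}}{2}\|x-\bar x\|^2\notin\dom g$; e.g.\ $m=1$, $\dom g=(-\infty,0]$, $F(x)=0$, $F(\bar x)=-s$ and $\tfrac{L_1}{2}\|x-\bar x\|^2>s$), the reduced point lies outside $\dom g$ for every $\gamma>0$, so no choice of $\gamma$ can rescue this bound; the phrase ``we can also interpolate to a point strictly inside'' is an assertion of the conclusion, not an argument.

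The missing idea is to keep the factor $\gamma^2$ and apply concavity \emph{twice}, both times pivoting on $F(\bar x)$ rather than on $F(x)$: first, writing the majorant as $(1-\gamma)F(x)+\gamma\big[F(\bar x)+\gamma\tfrac{\bm{L}}{2}\|x-\bar x\|^2\big]$ and dropping the nonnegative term $(1-\gamma)\,\mathrm{sdist}[F(x)]$ gives $\mathrm{sdist}\geq\gamma\,\mathrm{sdist}\big[F(\bar x)+\gamma\tfrac{\bm{L}}{2}\|x-\bar x\|^2\big]$; then $F(\bar x)+\gamma\tfrac{\bm{L}}{2}\|x-\bar x\|^2=(1-\gamma)F(\bar x)+\gamma\big[F(\bar x)+\tfrac{\bm{L}}{2}\|x-\bar x\|^2\big]$ and a second concavity step give the lower bound $\delta(\gamma):=\gamma(1-\gamma)s+\gamma^2\beta$. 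The term $\gamma(1-\gamma)s$, which your reduction destroys, is what keeps the bound positive for small $\gamma$ even when $\beta<0$; the stated $\gamma(\bar x,x)$ is precisely the maximizer of $\delta$ over $(0,1]$, and combining $\delta(\gamma(\bar x,x))$ with the elementary bound $\beta\geq-\tfrac{\|\bm{L}\|}{2}\|x-\bar x\|^2$ in each of the two cases of the $\min$ produces exactly the right-hand side of \eqref{EQ:lemma_low_bound_h}.
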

\begin{proof}
Fix an arbitary $x\in  K$. Then, for any $t\in (0,1]$ one has
\begin{eqnarray}
H(x,x+t(\bar{x} - x)) &=& F(x) + t\nabla F(x)(\bar{x} - x) + \frac{\bm{L}}{2}\|\bar{x} - x\|^2 t^2\nonumber\\
&\leq& (1-t) F(x) + t F(\bar{x}) + \frac{\bm{L}}{2}\|x - \bar{x}\|^2 t^2,
\label{EQ:ineq1_t}
\end{eqnarray}
where the last inequality is obtained by applying the coordinatewise convexity of $F$.
Therefore, for any $t\in(0,1]$, we have 
\begin{eqnarray}
&& \mathrm{sdist}[H(x,x+t(\bar{x} - x)),\mathrm{bd}\ \dom g ]\nonumber\\
 &\overset{(a)}{\geq}& \mathrm{sdist}[(1-t) F(x) + t F(\bar{x}) + \frac{\bm{L}}{2} \|x - \bar{x}\|^2 t^2,\mathrm{bd}\ \dom g ]\nonumber\\
&\overset{(b)}{\geq}&  \mathrm{sdist} [F(x),\mathrm{bd}\ \dom g ](1-t) +  \mathrm{sdist} [F(\bar{x}) + t \frac{\bm{L}}{2}\|x - \bar{x}\|^2 ,\mathrm{bd}\ \dom g ] t \nonumber\\
&\overset{(c)}{\geq} &   \mathrm{sdist} [(1-t) F(\bar{x}) + t ( F(\bar{x}) + \frac{\bm{L}}{2} \|x - \bar{x}\|^2 ),\mathrm{bd}\ \dom g ] t \nonumber\\
&\overset{(d)}{\geq}&  \mathrm{sdist}[F(\bar{x}),\mathrm{bd}\ \dom g ] t(1-t) +  \mathrm{sdist}[ F(\bar{x}) + \frac{\bm{L}}{2} \|x - \bar{x}\|^2,\mathrm{bd}\ \dom g ] t^2 \nonumber\\
&=:& \delta(t),
\label{EQ:ineq_delta}
\end{eqnarray}
where for (a) we combine equation (\ref{EQ:ineq1_t}) with Lemma \ref{LE:g_bd}, for (b) we use the concavity of the signed distance function (see \cite[p. 154]{Hiriart-Urruty:1993}), for (c) we use the fact that $(1-t) \mathrm{sdist} [F(x),\mathrm{bd}\ \dom g ] \geq 0$, and for (d) we use the concavity of the signed distance function again.

It is easy to verify that $\gamma(\bar{x},x)\in(0,1]$ is the maximizer of $\delta(t)$ over the interval $(0,1]$. We now consider the following inequality. 
\begin{eqnarray}
				\label{EQ:ineqDistance}
\mathrm{sdist}[F(\bar{x})+\bm{L} \|x - \bar{x}\|^2/2,\mathrm{bd}\ \dom g ] \geq - \frac{\|\bm{L}\|}{2}\|x - \bar{x}\|^2, 
\end{eqnarray}
Inequality (\ref{EQ:ineqDistance}) holds true: indeed, either $F(\bar{x})+\bm{L} \|x - \bar{x}\|^2/2 \in \mathrm{dom}\ g$ and the result is trivial or otherwise, the result holds by the definition of the distance as an infimum.
If $\gamma(\bar{x},x) = 1$, by its definition, one immediately has
\begin{eqnarray}
\frac{\mathrm{sdist}[F(\bar{x}),\mathrm{bd}\ \dom g ]}{2\{\mathrm{sdist}[F(\bar{x}),\mathrm{bd}\ \dom g ]- \mathrm{sdist}[F(\bar{x})+\bm{L} \|x - \bar{x}\|^2/2,\mathrm{bd}\ \dom g ]\}} \geq 1,\nonumber
\end{eqnarray}
which implies
\begin{eqnarray}
\mathrm{sdist}[F(\bar{x})+\bm{L} \|x - \bar{x}\|^2/2,\mathrm{bd}\ \dom g ] \geq \frac{\mathrm{sdist}[F(\bar{x}),\mathrm{bd}\ \dom g ]}{2}.
\label{EQ:beta_alpha2}
\end{eqnarray}
Substituting $\gamma(\bar{x},x) = 1$ into equation (\ref{EQ:ineq_delta}) yields
\begin{eqnarray}
\delta(\gamma(\bar{x},x)) &=&   \mathrm{sdist}[ F(\bar{x}) + \frac{\bm{L}}{2} \|x - \bar{x}\|^2,\mathrm{bd}\ \dom g ] \nonumber\\
&\geq & \frac{\mathrm{sdist}[F(\bar{x}),\mathrm{bd}\ \dom g ]}{2}\nonumber\\
&\geq& \frac{\mathrm{sdist}[F(\bar{x}),\mathrm{bd}\ \dom g ]}{2} \times \frac{1}{2}\frac{\mathrm{sdist}[F(\bar{x}),\mathrm{bd}\ \dom g ]}{\mathrm{sdist}[F(\bar{x}),\mathrm{bd}\ \dom g ] + \|\bm{L}\|\|x - \bar{x}\|^2/2}\nonumber\\
&=&  \frac{\mathrm{sdist}[F(\bar{x}),\mathrm{bd}\ \dom g ]^2 }{4 \big\{\mathrm{sdist}[F(\bar{x}),\mathrm{bd}\ \dom g ] + \|\bm{L}\| \|x - \bar{x}\|^2/2\big\}},
\label{EQ:gamma=1}
\end{eqnarray}
where the {first}
 inequality is obtained by considering equation (\ref{EQ:beta_alpha2}). As a result, equation (\ref{EQ:lemma_low_bound_h}) holds true if $\gamma(\bar{x},x) = 1$.

From now on, let us consider $\gamma(\bar{x},x)<1$. In this case, one has
 \begin{eqnarray}
\gamma(\bar{x},x) = \frac{\mathrm{sdist}[F(\bar{x}),\mathrm{bd}\ \dom g ]}{2\{\mathrm{sdist}[F(\bar{x}),\mathrm{bd}\ \dom g ]- \mathrm{sdist}[F(\bar{x})+\bm{L} \|x - \bar{x}\|^2/2,\mathrm{bd}\ \dom g ]\}}.\nonumber
\end{eqnarray}
Substituting into equation (\ref{EQ:ineq_delta}) and using (\ref{EQ:ineqDistance}) leads to 
\begin{eqnarray}
\delta(\gamma(\bar{x},x)) &=& \frac{\mathrm{sdist}[F(\bar{x}),\mathrm{bd}\ \dom g ]^2 }{4 \big\{\mathrm{sdist}[F(\bar{x}),\mathrm{bd}\ \dom g ] -\mathrm{sdist}[F(\bar{x})+\bm{L} \|x - \bar{x}\|^2/2,\mathrm{bd}\ \dom g ]\big\}}\nonumber\\
&\geq& \frac{\mathrm{sdist}[F(\bar{x}),\mathrm{bd}\ \dom g ]^2 }{4 \big\{\mathrm{sdist}[F(\bar{x}),\mathrm{bd}\ \dom g ] + \|\bm{L}\| \|x - \bar{x}\|^2/2\big\}}.\nonumber
\end{eqnarray}
Eventually, combining this equation with (\ref{EQ:ineq_delta}) and (\ref{EQ:gamma=1}) completes the proof.
\end{proof}
We are now ready to describe the proof of Lemma \ref{TH:explicitBound}
\begin{proof}[Proof of Lemma \ref{TH:explicitBound}]
				(i) As the function $g$ is $L_g$ Lipschitz continuous on its domain, an immediate application of the Cauchy-Schwartz inequality leads to $\bm{L}^T\nu \leq L_g \|\bm{L}\|$ (see also Section \ref{SE:LIP}).

(ii)
{The claim is trivial if $\mathrm{bd\ dom}(g) = \emptyset$, hence we will assume that it is not so that we can use Lemmas 5 and 6.}
Set $w = x + \gamma(\bar{x},x)(\bar{x} - x)$ with $\gamma(\bar{x},x)$ given as in Lemma \ref{LE:lower_bound_Hxw}. By Lemma \ref{LE:lower_bound_Hxw}, one has $w \in \mathrm{dom}(g\circ H(x,\cdot))$. Then, one obtains
\begin{eqnarray}
\frac{\bm{L}^T\nu}{2}\|w - y\|^2 &=& [H(x,w) - H(x,y)]^T\nu\nonumber\\
&\leq& g\circ H(x,w) - g\circ H(x,y)\nonumber\\
&\leq& L_g \|H(x,w) - H(x,y)\|,
\label{EQ:Lipschitz_g_Lg}
\end{eqnarray}
where the equality follows from equation (\ref{EQ:strong_a}), the first inequality is obtained by the convexity of $g$, and the last inequality is due to the assumption that $g$ is $L_g$ Lipschitz continuous on its domain.
On the other hand, a direct calculation yields
\begin{align}
\|H(x,w) - H(x,y)\| =&\ \|\nabla F(x) (w - y) + \frac{\bm{L}}{2}\|w - x\|^2 - \frac{\bm{L}}{2}\|y - x\|^2 \|\nonumber\\ 
=&\ \|\nabla F(x) (w - y) + \frac{\bm{L}}{2} ( \|w - y\|^2  + 2(w-y)^T(y-x) ) \|\nonumber\\ 
\leq&\ \|\nabla F(x)\|_{\rm op}\|w-y\| + \frac{\|\bm{L}\|}{2} \|w - y\|^2 + \|\bm{L}\|\|w-y\|\|y - x\|\nonumber\\
=&\ \|w - y\| \left[ \|\nabla F(x)\|_{\rm op} + \frac{\|\bm{L}\|}{2}\|w - y\| + \|\bm{L}\|\| y - x\|\right]\nonumber\\
\leq&\  \|w - y\| \left[ \|\nabla F(x)\|_{\rm op} + \frac{\|\bm{L}\|}{2}\|x - y\| + \gamma(\bar{x},x)\frac{\|\bm{L}\|}{2}\|\bar{x} - x\| + \|\bm{L}\|\| y - x\|\right]\nonumber\\
\leq&\  \|w - y\| \left[ \|\nabla F(x)\|_{\rm op} + \frac{3\|\bm{L}\|}{2}\|x - y\| + \frac{\|\bm{L}\|}{2}\|\bar{x} - x\| \right]\nonumber
\end{align}
Substituting this inequality into equation (\ref{EQ:Lipschitz_g_Lg}) leads to
\begin{eqnarray}
				\frac{\bm{L}^T \nu}{2}\|H(x,w) - H(x,y)\| \leq L_g \left[\|\nabla F(x)\|_{\rm op} + \frac{3\|\bm{L}\|}{2}\|x - y\| + \frac{\|\bm{L}\|}{2}\|\bar{x} - x\| \right]^2.
\end{eqnarray}
As $H(x,y)$ is on the boundary of $\dom g $, it follows that
\begin{eqnarray}
\|H(x,w) - H(x,y)\| \geq \mathrm{sdist}[H(x,w),\mathrm{bd}\ \dom g ].
\end{eqnarray}
Combining this inequality with Lemma \ref{LE:lower_bound_Hxw} eventually completes the proof. 
\end{proof}


\begin{thebibliography}{99}


\bibitem{auslender2006interior}
A.~Auslender and M.~Teboulle (2006). 
\newblock {\em Interior gradient and proximal methods for convex and conic optimization}.
\newblock SIAM Journal on Optimization, 16(3):697--725.

\bibitem{Auslender:10}
A.~Auslender, R.~Shefi, and M.~Teboulle (2010).
\newblock {\em A Moving Balls Approximation Method for a Class of Smooth Constrained Minimization Problems}.
\newblock SIAM Journal on Optimization, 20(6):3232--3259.

\bibitem{auslender2013extended}
A.~Auslender (2013).
\newblock \emph{An extended sequential quadratically constrained quadratic programming algorithm for nonlinear, semidefinite, and second-order cone programming}. 
\newblock Journal of Optimization Theory and Applications, 156(2):183--212.


\bibitem{BC17} 
H.H.~Bauschke and P.L.~Combettes (2017). 
\newblock {\em Convex analysis and monotone operator theory in Hilbert spaces}. 
\newblock Springer.


\bibitem{Beck:2009}
A.~Beck A and M.~Teboulle (2009). 
\newblock \emph{A fast iterative shrinkage-thresholding algorithm for linear inverse problems}.
\newblock SIAM journal on imaging sciences, 2(1):183--202.

\bibitem{BP} 
J.~Bolte and E.~Pauwels (2016). 
\newblock {\em Majorization-minimization procedures and convergence of SQP methods for semi-algebraic and tame programs}. 
Mathematics of Operations Research, 41(2):442--465.

\bibitem{BGW}
R.I.~Bot, S.M.~Grad, and G.~Wanka (2009).
\newblock{\em Duality in Vector Optimization}.
Springer Science \& Business Media.

\bibitem{Burke:1985} 
J.V.~Burke (1985). 
\newblock {\em Descent methods for composite nondifferentiable optimization problems}. 
Mathematical Programming, 33(3):260--279.

\bibitem{Burke:1995} 
J.V.~Burke and M.C.~Ferris (1995).
\newblock {\em A Gauss-Newton method for convex composite optimization}. 
\newblock Mathematical Programming 71(2):179--194.

\bibitem{Cartis:2001}
C.~Cartis, N.I.~Gould and P.L.~Toint (2011).
\newblock {\em On the evaluation complexity of composite function minimization with applications to nonconvex nonlinear programming}.
\newblock SIAM Journal on Optimization, 21(4):1721--1739.

\bibitem{toint14}
C.~Cartis, N.~Gould and P.~Toint (2014).
\newblock \emph{On the complexity of finding first-order critical points in constrained nonlinear optimization}.
\newblock Mathematical Programming, 144(1):93--106.


\bibitem{Combettes:2005}
P.L.~Combettes and V.R.~Wajs (2005). 
\newblock {\em Signal recovery by proximal forward-backward splitting}, 
\newblock Multiscale Modeling \& Simulation, 4(4):1168--2000.

\bibitem{Combettes:2011} 
P.L.~Combettes and J.-C.~Pesquet (2011).
\newblock {\em Proximal splitting methods in signal processing, Fixed-Point Algorithm for Inverse Problems in Science and Engineering}. 
\newblock Optimization and Its Applications, Springer New York, 185--212.

\bibitem{plc13} 
P.L.~Combettes (2013). 
\newblock {\em Systems of structured monotone inclusions: Duality, algorithms, and applications}. 
\newblock SIAM Journal on Optimization, 23(4):2420--2447.

\bibitem{plc16} 
P.L.~Combettes and J.~Eckstein (2016).
\newblock {\em Asynchronous block-iterative primal-dual decomposition methods for monotone inclusions}. 
\newblock Mathematical Programming, published online. 

\bibitem{Drusvyatskiy:2016}
D.~Drusvyatskiy and A.S.~Lewis (2016).
\newblock {\em Error bounds, quadratic growth, and linear convergence of proximal methods}.
\newblock Preprint arXiv:1602.06661.

\bibitem{Drusvyatskiy:20162}
D.~Drusvyatskiy and C.~Paquette (2016). 
\newblock {\em Efficiency of minimizing compositions of convex functions and smooth maps}. 
\newblock Preprint arXiv:1605.00125.

\bibitem{eckstein1993nonlinear}
J.~Eckstein (1993). 
\newblock {\em Nonlinear proximal point algorithms using Bregman functions, with applications to convex programming}. 
\newblock Mathematics of Operations Research 18(1):202--226.

\bibitem{flet80} R. Fletcher (1980). {\em A model algorithm for composite nondifferentiable optimization problems.} 
Math. Programming Stud., (17):67--76, 1982. Nondifferentiable and variational techniques in
optimization (Lexington, Ky).

\bibitem{Hiriart-Urruty:1993}
J.-B.~Hiriart-Urruty  and C.~Lemarechal (1993).
\newblock {\em Convex Analysis and Minimization Algorithm I}. 
\newblock Springer.

\bibitem{hiriart2006note}
J.~B.~Hiriart-Urruty (2006). 
\newblock {\em A note on the Legendre-Fenchel transform of convex composite functions}. 
\newblock Nonsmooth Mechanics and Analysis, 35--46, Springer US. 

\bibitem{Levitin:66}
E.S.~Levitin and B.T.~Polyak (1966).
\newblock {\em Constrained minimization methods}. 
\newblock USSR Computational mathematics and mathematical physics, 6(5):1--50.

\bibitem{lewis2015proximal}
A. S.~Lewis and S. J.~ Wright (2015). 
\newblock {\em A proximal method for composite minimization}, 
\newblock Mathematical Programming, published online.

\bibitem{Li:2007} 
C.~Li and K.F.~Ng (2007).  
\newblock {\em Majorizing functions and convergence of the Gauss-Newton method for convex composite optimization}.
\newblock SIAM Journal of Optimization, 18(2):613--642.

\bibitem{Li:2002}
C.~Li and X.~Wang (2002).
\newblock {\em On convergence of the Gauss-Newton method for convex composite optimization}.
\newblock Mathematical Programming, 91(2):349--356.

\bibitem{lions1979splitting}
P.-L~Lions and B.~Mercier (1979). 
\newblock {\em Splitting algorithms for the sum of two nonlinear operators}. 
\newblock SIAM Journal on Numerical Analysis, 16(6):964--979.

\bibitem{Lofberg:2004}
J.~Lofberg (2004)
\newblock {\em YALMIP: A toolbox for modeling and optimization in MATLAB}.
\newblock IEEE International Symposium on Computer Aided Control Systems Design.

\bibitem{Martinet:1970}
B.~Martinet (1970).
\newblock {\em Br\`eve communication. R\'egularisation d'in\'equations variationnelles par approximations successives}.
\newblock Revue fran\c caise d'informatique et de recherche op\'erationnelle, s\'erie rouge, 4(3):154--158.

\bibitem{Moreau:1965}
J.-J.~Moreau (1965). 
\newblock {\em Proximit\'e et dualit\'e dans un espace hilbertien}. 
\newblock Bulletin de la Soci\'et\'e math\'ematique de France. 93:273--299.

\bibitem{Moreau} J.-J.~Moreau (1977).
\newblock {\em Evolution problem associated with a moving convex set in a Hilbert space}. 
\newblock Journal of Differential Equations, 26(3):347--374.

\bibitem{Mosek:2016}
Mosek Aps (2016).
\newblock {\em The MOSEK optimization toolbox for MATLAB manual}.
\newblock Version 7.1.

\bibitem{nesterov1994interior}
Y.~Nesterov and A.~Nemirovskii (1994). 
\newblock {\em Interior-point polynomial algorithms in convex programming}. 
\newblock Society for industrial and applied mathematics.

\bibitem{Nesterov:2004}
Y.~Nesterov (2004) 
\newblock {\em Introductory Lectures on Convex Programming, Volumne I: Basis course}.
\newblock Springer Science \& Business Media.

\bibitem{Nemirovskii:1983}
A.~Nemirovskii and D.~Yudin (1983). 
\newblock {\em Problem Complexity and Method Efficiency in Optimization}. 
\newblock John Wiley \& Sons.

\bibitem{nocedal}
J.~Nocedal and S.~Wright (2006). 
\newblock {\em Numerical optimization}.
\newblock Springer Science \& Business Media.

\bibitem{Ortega:2000}
J.M.~Ortega and W.C.~Rheinboldt (2000).
\newblock {\em Iterative solution of nonlinear equations in several variables}. 
\newblock SIAM.

\bibitem{passty1979ergodic}
G.B.~Passty (1979). 
\newblock {\em Ergodic convergence to a zero of the sum of monotone operators in Hilbert space}. 
\newblock Journal of Mathematical Analysis and Applications, 72(2):383--390.

\bibitem{Pauwels:2016}
E.~Pauwels (2016).
\newblock {\em The value function approach to convergence analysis in composite optimization}.
\newblock Operations Research Letters, 44(6):790--795.

\bibitem{pshe87} B.N. Pshenichnyi,  (1987). The linearization method. Optimization, 18(2), 179-196, Springer.


\bibitem{rock}
R.T.~Rockafellar (1976).
\newblock {\em Augmented Lagrangians and applications of the proximal point algorithm in convex programming}.
\newblock Mathematics of operations research, 1(2):97--116.
	

\bibitem{Rockafellar:1997}
R.T.~Rockafellar (1970).
\newblock {\em Convex Analysis}.
\newblock Princeton University Press.

\bibitem{Rockafellar:1998}
R.T.~Rockafellar and R.~Wets (1998).
\newblock {\em Variational Analysis}. 
\newblock Springer.

\bibitem{rosen1960gradient}
J.B.~Rosen (1960). 
\newblock {\em The gradient projection method for nonlinear programming. Part I. Linear constraints}. 
\newblock Journal of the Society for Industrial and Applied Mathematics, 8(1):181--217.

\bibitem{rosen1961gradient}
J.B.~Rosen (1961). 
\newblock {\em The gradient projection method for nonlinear programming. Part II. Nonlinear constraints}. 
\newblock Journal of the Society for Industrial and Applied Mathematics, 9(4):514--532.

\bibitem{leroux2012stochastic}
N.~Le~Roux, M.~Schmidt and F.~Bach (2012). 
\newblock {\em A stochastic gradient method with an exponential convergence rate for finite training sets}. 
\newblock Advances in Neural Information Processing Systems.

\bibitem{salzo2012convergence}
S.~Salzo and S.~Villa (2012).
\newblock {\em Convergence analysis of a proximal Gauss-Newton method}. 
\newblock Computational Optimization and Applications, 53(2):557--589.

\bibitem{schmidt2017minimizing}
M.~Schmidt, N.~ Le~Roux and F.~Bach (2017). 
\newblock {\em Minimizing finite sums with the stochastic average gradient}.
\newblock Mathematical Programming 162(1-2):83--112.

\bibitem{shefi2016dual}
R.~Shefi and M.~Teboulle (2016). 
\newblock {\em A dual method for minimizing a nonsmooth objective over one smooth inequality constraint}. 
\newblock Mathematical Programming, 159(1-2):137--164.

\bibitem{solodov2009global}
Solodov (2009).
\newblock \emph{Global convergence of an SQP method without boundedness assumptions on any of the iterative sequences}.
\newblock Mathematical Programming, 118(1):1--12.


\bibitem{Tseng:1991}
P.~Tseng (1991). 
\newblock {\em Applications of a splitting algorithm to decomposition in convex programming and variational inequalities}.
\newblock SIAM Journal on Control and Optimization, 29(1):119--138.

\bibitem{villa2013accelerated}
S.~Villa, S.~Salzo, L.~Baldassarre and A.~Verri (2013). 
\newblock {\em Accelerated and inexact forward-backward algorithms}. 
\newblock SIAM Journal on Optimization, 23(3):1607--1633.


\bibitem{ye} Y. Ye (1997), Interior Point Algorithms: Theory and Analysis Yinyu Ye Wiley \& Sons, New York.
\end{thebibliography}
\end{document}